\documentclass[11pt]{amsart}\usepackage{mathtools,amssymb,latexsym,graphics,enumerate}
\usepackage[mathscr]{eucal}
\usepackage{amsmath,amsfonts,amsthm,amssymb,bbm,mathtools}
\usepackage{color}
\usepackage{cite,soul,cancel}
\usepackage[left=1in,right=1in,top=1in,bottom=1in]{geometry}
\usepackage[normalem]{ulem}
\usepackage{nicefrac}

\usepackage{graphicx}
\usepackage{subcaption}
\usepackage{float}

\numberwithin{equation}{section}

\renewcommand{\geq}{\geqslant}
\renewcommand{\ge}{\geqslant}
\renewcommand{\leq}{\leqslant}
\renewcommand{\le}{\leqslant}

\newcommand{\rr}{\mathbb{R}}

\newcommand{\lan}{\langle}
\newcommand{\ran}{\rangle}
\newcommand{\be}{\begin{eqnarray*}}
\newcommand{\bel}{\begin{eqnarray}}
\newcommand{\ee}{\end{eqnarray*}}
\newcommand{\eel}{\end{eqnarray}}
\newcommand{\ba}{\begin{aligned}}
\newcommand{\ea}{\end{aligned}}
\newcommand{\de}{\Delta}
\newcommand{\al}{\alpha}
\newcommand{\na}{\nabla}
\newcommand{\ep}{\epsilon}
\newcommand{\f}{\frac}

\newcommand{\bp}{{\mathbf{p}}}
\newcommand{\bq}{{\mathbf{q}}}
\newcommand{\bx}{{\mathbf{x}}}
\newcommand{\dx}{\textnormal{d}\bx}
\newcommand{\dt}{\textnormal{d}t}
\newcommand{\ddt}{\frac{\textnormal{d}}{\dt}}
\newcommand{\ds}{\textnormal{d}s}
\newcommand{\by}{{\mathbf{y}}}
\newcommand{\dS}{{\textnormal{d}S}}
\newcommand{\dpp}{{\textnormal{d}\mathbf{p}}}
\newcommand{\dy}{\textnormal{d}\by}
\newcommand{\D}{{\mathbb{D}}}
\newcommand{\bk}{{\mathbf{k}}}
\newcommand{\bl}{{\boldsymbol{\ell}}}

\newcommand{\mf}{\mathfrak}

\newcommand{\pa}{\partial}
\newcommand{\wh}{\widehat}
\newcommand{\wt}{\widetilde}
\newcommand{\lf}{\left}
\newcommand{\rg}{\right}
\newcommand{\te}{\theta}

\newtheorem{theorem}{Theorem}
\newtheorem{cor}{Corollary}

\newtheorem{lem}{Lemma}

\newtheorem{remark}{Remark}

\numberwithin{remark}{section}
\numberwithin{lem}{section}
\numberwithin{theorem}{section}
\numberwithin{cor}{section}
\numberwithin{pro}{section}

\newcommand\Torus{{\mathbb T}}

\newcommand{\dss}{\displaystyle}
\newcommand{\vv}{\mathbb{V}}
\newcommand{\uu}{{\mathbb{U}}}
\newcommand{\ww}{{\mathbb{W}}}
\newcommand{\zz}{Z}

\newcommand{\mr}{\mathrm}
\newcommand\step[1]{\noindent {\bf Step \##1}}

\newcommand{\pw}[2]{ {#1}^{(#2)}} 
\newcommand{\A}{\mathcal{A}}
\newcommand{\dd}{\mathbbm{d}}
\newcommand{\msc}{\mathscr}
\newcommand{\br}[1]{{\left\langle #1\right\rangle}}

\newcommand{\myh}[1]{}
\newcommand{\mys}[1]{{#1}}

\newcommand{\n}{\ensuremath{\nonumber}}

\newcommand{\cG}{\mathcal{G}}

\definecolor{mygreen}{rgb}{0.1,0.75,0.2}

\providecommand{\bbs}[1]{\left(#1\right)}

\newcommand{\pt}{\partial}

\providecommand{\bbs}[1]{\left(#1\right)}
\newcommand{\aaa}[1]{\begin{equation}
begin{aligned} #1 \end{aligned}
\end{equation}}

\newcommand{\ud}{\,\mathrm{d}}
\newcommand{\8}{\infty}

\newcommand{\hs}{\mathcal{H}}

\newcommand{\bR}{\mathbb{R}}

\newcommand{\bv}{\mathbf{v}}

\newcommand{\bb}{{\mathbf b}}

\renewcommand{\vec}[1]{\ensuremath{\boldsymbol{#1}}}

\mathtoolsset{showonlyrefs}
\allowdisplaybreaks

\usepackage[dvipsnames]{xcolor}
\usepackage[colorlinks=true, pdfstartview=FitV, linkcolor=RoyalBlue,citecolor=ForestGreen, urlcolor=blue]{hyperref}

\usepackage{varwidth}

\date{\today}

\title{Sampling and Optimization {meet Enhanced} Flows}
\author{Yuan Gao} \address{Department of Mathematics, Purdue University, West Lafayette, IN, USA}
\email{gao662@purdue.edu}
\author{Siming He}\address{Department of Mathematics, University of South Carolina, Columbia, SC, USA}
\email{siming@mailbox.sc.edu}
\author{Eitan Tadmor}
\address{Department of Mathematics and IPST, University of Maryland, College Park, MD, USA}
\email{tadmor@umd.edu}

\thanks{\textbf{Acknowledgment.} The research of YG is supported by NSF CAREER Award DMS-2440651. SH is supported by NSF grant DMS-2406293 and would like to thank Wuchen Li for helpful discussions. The research of ET was supported by ONR grant N00014-2412659 and NSF grant DMS-2508407.}

\begin{document}

\begin{abstract}
It is well known that the computational realization of Gibbs probability measures, $e^{-\mathbb{U}(\mathbf{x})}/Z$, plays a central role in sampling and optimization. In this paper, we introduce two types of dynamics that exhibit rapid convergence towards these Gibbs measures. The mechanism driving this rapid convergence is the enhanced dissipation associated with these transport-diffusion dynamics. Motivated by these enhanced dynamics, we design numerical algorithms for sampling from the target Gibbs measure. Finally, we provide the corresponding particle systems that may yield other effective numerical samplers. 
\end{abstract}

\keywords{ Langevin Dynamics, Enhanced dissipation, Mixing}
\subjclass[2020]{Primary 35B40, 35Q84;
  Secondary 35Q35, 37A25, 65C05}
\maketitle

{\small 
\tableofcontents
}

\vspace*{-1cm}
\section{Introduction}
A classical problem in sampling theory and optimization is to design dynamical systems whose solutions converge rapidly towards a target 
\begin{align}\label{Trgt}
0\leq \Pi(\bx)= \frac{ e^{-{\uu}(\bx)}}{\zz},\quad  \bx \in \Torus^d=(\rr/2\pi \mathbb Z)^d.
\end{align} 
The normalization constant $ \zz:=\int_{\Torus^d} e^{-{\uu}(\bx)}\dx$ ensures that $\Pi$ is a probability density. 
{The efficient numerical realization of general densities $\Pi$ is fundamental in two distinct but intertwined areas (see, e.g., \cite{trillos2023optimization}). In \emph{sampling theory}, specifically in Bayesian inference, sampling from a fixed probability distribution $\Pi$ is crucial for approximating statistically relevant quantities--such as expectations and variances--of high-dimensional observables; see, e.g., \cite{RobertCasella04, trillos2023optimization}. On the other hand, in the context of \emph{optimization}, approximating the target $\Pi={e^{-\uu}}/{Z}$ helps characterize the global optimum of the associated target function $\uu$. 

Although many methods have been developed, significant challenges remain in the vast literature on this problem. The first numerical challenge arises when one attempts to compute the value of the normalization constant $Z$--particularly in the high-dimensional setting, $d\gg 1$. In spite of the straightforward theoretical representation of the constant $Z$, numerical implementation of the corresponding high-dimensional quadrature is challenging; see, e.g., \cite{Haber70,SloanWozniakowski97Intractability,Novak16Complexity}.   
We mention {direct estimators} based on lattice methods~\cite{SloanJoe94}, bridge and path sampling~\cite{GelmanMeng98} and annealed importance sampling~\cite{Neal01}. 
Another widely applied technique, which {circumvents direct} computation of the normalization constant $\zz$, is the {design of} \emph{Langevin sampling dynamics} whose long-time behavior converges toward the target density $\Pi$; see, e.g., \cite{Pavliotis14}. 
We recall the gradient flow associated with \emph{overdamped Langevin sampling dynamics}, which is given by
\cite{Pavliotis14,Altschuler22,Bhattacharya78,
Parisi81,RobertsTweedie96,GirolamiCalderhead11},
\begin{align}\label{MCMC}
\pa_t \rho=\nu\de_\bx\rho+\nu\na_\bx\cdot(\rho\na_\bx\uu),\quad \rho(t=0,\bx)=\rho_0(\bx),\quad \bx\in \Torus^d.
\end{align}
Here, the solution $\rho\geq 0$ is a probability density, and the parameter $\nu^{-1}\geq 1$ encodes the intrinsic time scale of the Langevin dynamics.  One readily checks that \(Z^{-1} e^{-\mathbb U}\) is a stationary solution of \eqref{MCMC}, 
and hence the target measure \(\Pi\) can be approximated by the solution of \eqref{MCMC} as \(t\to\infty\).
However, the convergence \(\rho(t) \stackrel{t\rightarrow \infty}{\longrightarrow}\Pi\) can be exceedingly slow; see, e.g., \cite{christie2025speeding}. In this context, one may consider the following scenario, in which the function $\uu$ has multiple ``valleys'' surrounding local minima, and the initial density is concentrated in one of them. Then, it can take the Langevin dynamics \eqref{MCMC} an extremely long time to escape these ``valleys'' around local minima before successfully exploring other parts of the landscape of $\uu$. However, one expects that the judicious introduction of an external drift can drive the density away from these locally trapped states and enable it to explore the landscape effectively. In conclusion, we aim to design new dynamics to address the following question: 

{\centering
\bf Question:} \emph{By incorporating external drift, is it possible to design dynamics whose solution converges to a given smooth density $\Pi(\bx)=e^{-\uu(\bx)}/\zz$ within a short time scale?}}

Initial progress toward this goal was made in \cite{christie2025speeding}, where the authors considered the following modified Langevin dynamics (in PDE form):
\begin{align} \label{drift_defect}&\pa_t \rho+\na_\bx \cdot (\vv \rho)=\nu \Delta_\bx \rho+\nu \na_\bx\cdot (\rho\na_\bx\uu).  
\end{align}
Here the external time-dependent drift $\vv$ is chosen such that the \textit{drift defect}, 
\begin{align}\label{D}
\mathbb{D}:=-e^{\uu}\na_\bx\cdot(e^{-\uu}\vv),
\end{align}
is identically zero, i.e., $\mathbb D\equiv 0$. The vanishing of the {drift defect} ensures that the dynamics preserves the invariant measure $\Pi=e^{-\uu}/Z$. This can be verified by rewriting the dynamics in terms of the ratio $h=\rho/\Pi$ and observing that $h\equiv 1$ is a stationary solution. In \cite{christie2025speeding}, the authors identify delicate, rapidly alternating random drifts $\vv$ that exhibit strong mixing properties \cite{BedrossianBlumenthalPunshonSmith19,BlumenthalCotiZelatiGvalani23} while preserving a vanishing drift defect $\mathbb D$. As a consequence, rapid convergence of the dynamics towards $\Pi$ is established theoretically. However, due to the randomness and rapid alternation involved in the construction, it is numerically challenging to simulate the dynamics for long times. Finally, the choice of $\vv$ is constrained by the requirement that $\D\equiv0$, which also complicates the numerical implementation.  

Motivated by the aforementioned goal and the early development of \cite{christie2025speeding}, we propose two types of enhanced dynamics that exhibit rapid convergence towards the target distribution $\Pi$ while reducing the complexity of the drift $\vv$. We start by introducing a \textit{first-order model} incorporating alternating shear flows, then we present a \textit{second-order kinetic model} with a fixed drift. Both models are supplemented with their \textit{simplified versions} and the quadrature-free \textit{mass-searching dynamics} for numerical implementation.

\subsection{The First-Order Model}
Given the target distribution $\Pi=Z^{-1}e^{-\uu}$ and a \emph{divergence-free} ambient drift $\vv$, we recall the drift defect $\D$ \eqref{D} associated with $\uu$ and $\vv$,
\begin{align*}
\mathbb{D}=-e^{\uu}\na_\bx\cdot(e^{-\uu}\vv)= \vv\cdot\na_\bx\uu,
\end{align*}
and introduce the first-order  \emph{mass-preserving enhanced sampling} model,
\begin{equation}\label{EQ:1st_Ord}\begin{split}&\displaystyle\pa_t \rho+\overbrace{\vphantom{\int} \na_\bx \cdot(\vv\rho)}^{\text{Transport}}=\overbrace{\vphantom{\int}\nu\de_\bx \rho+\nu\na_\bx\cdot(\rho \na_\bx{\uu})}^{\text{Gradient Flow}}  +\overbrace{\rho\int \lf(\mathbb{D}(\by)-\mathbb{D}(\bx)\rg) \rho(\by) \dy}^{\mathbb D\text{-alignment }\mathcal{Q}_{\uu,\vv}[\rho]},\\
&\na_\bx\cdot\vv=0,\quad\rho(t=0,\bx)=\rho_{0}(\bx),\quad (t,\bx)\in \rr_+\times \Torus^d.   \end{split}
\end{equation}
The model describes the time evolution of the probability density $\rho\geq0$. 
Here, the viscosity parameter $\nu>0$ characterizes the strength of the classical Langevin dynamics (``gradient flow''). We focus on the parameter regime $0<\nu\ll 1$, emphasizing the dominant roles of transport and alignment. Next, we observe that if the drift $\vv$ is absent, the model \eqref{EQ:1st_Ord} reduces to the classical equation \eqref{MCMC}. Thus, a suitably chosen divergence-free vector field $\vv$ can be viewed as an external control to enhance the convergence of the Langevin dynamics. The key question to address is the \emph{convergence rate}: below we measure the \emph{relative error}, $ \lf\|\frac{\rho(t,\cdot) }{\Pi(\cdot)}-1 \rg\|_{L^2}$, 
proving that our enhanced model with properly chosen control $\vv$ leads to a rapid
exponential convergence in time. 
Finally, in order to preserve the invariant measure $\Pi$ of the dynamics, the transport term $\na_\bx\cdot (\vv\rho)$ on the left-hand side of \eqref{EQ:1st_Ord} is coupled {on the right} with the  
\emph{defect alignment} term ($\mathbb D$-alignment), 
which can be reformulated as follows 
\[
 \mathcal{Q}_{\uu,\vv}[\rho](t,\bx)
 =-\rho(t,\bx)\vv(t,\bx)\cdot \na_\bx {\uu}(\bx) +\rho(t,\bx)\int \rho(t,\by)\, \vv(t,\by)\cdot \na_\by {\uu}(\by) \dy
.
\] 
Furthermore, by replacing the divergence-free condition $\na_\bx\cdot\vv\equiv0$ by the vanishing-defect constraint $\mathbb D\equiv 0$, one recovers the model in \cite{christie2025speeding}. 
The commutator structure in the $\mathbb{D}$-alignment $\mathcal{Q}_{\uu,\vv}[\rho]$ appears in various alignment/flocking dynamics
\cite{ShvydkoyTadmor17,griffin2019consensus,Grindrod88,MotschTadmor14,HeTadmor192}.
 The role of such an antisymmetric alignment 
 mechanism {is} to drive the density $\rho$ towards {an} ``environmental average'', {which in our case will be the desired distribution $\Pi$}. Moreover, the 
antisymmetric form of the alignment term guarantees mass conservation, $\|\rho(\cdot, t)\|_{L^1_\bx}=\|\rho_{0}\|_{L_\bx^1}=1$.

Our first main theorem illustrates that it is possible to design the dynamics \eqref{EQ:1st_Ord} such that its solutions $\rho(t,\cdot)$ exhibit fast convergence to the target distribution $\Pi$. 
\begin{theorem}\label{thm_1st_order}
Consider the enhanced dynamics \eqref{EQ:1st_Ord} associated with the target probability density $0<\Pi\in C^2(\Torus^d)$ with $2\leq d\in 2 \mathbb{N}$. Assume that the initial data is smooth and normalized: $0\leq \rho_{0}\in C^\infty(\Torus^d)$ and $\|\rho_0\|_{L^1}=1$. There exist time-dependent velocity fields $\vv\in C_{t,\bx}^\infty$, constants $C_\dagger=C_\dagger(\rho_{0},Z, \uu)$, $\delta=\mathcal{O}(1)>0$ independent of $d$ and $\nu$, and a threshold $\nu_0(\rho_{0}, \uu)$ such that the following estimate holds for all $\nu\in (0,\nu_0]$: 
\begin{align}\label{eq:relative-estimate}
\lf\|\f {\rho(t)}{\Pi}-1\rg\|_{L^2}\leq& C_\dagger  e^{-\delta\nu^{1/2}t},\quad \forall t\geq 0.
\end{align} 
Moreover, the velocity fields $\vv$ have concrete expressions. 
\end{theorem}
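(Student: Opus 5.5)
The plan is to pass to the ratio $h=\rho/\Pi$, which removes the $\mathbb D$ terms and leaves a linear transport--diffusion equation up to a scalar factor. Because $\na_\bx\cdot\vv=0$,
\[
\na_\bx\cdot(\vv\Pi h)=\Pi\,\vv\cdot\na_\bx h-\Pi\,\mathbb D\,h ,\qquad
\nu\de_\bx\rho+\nu\na_\bx\cdot(\rho\na_\bx\uu)=\nu\na_\bx\cdot(\Pi\na_\bx h).
\]
The local part $-\rho\,\mathbb D(\bx)$ of $\mathcal Q_{\uu,\vv}[\rho]$ then cancels the $-\Pi\mathbb D h$ coming from the transport term, and \eqref{EQ:1st_Ord} becomes
\[
\pa_t h+\vv\cdot\na_\bx h=\nu\,\mathcal L_\uu h+\lambda(t)h,\qquad \mathcal L_\uu h:=\Pi^{-1}\na_\bx\cdot(\Pi\na_\bx h),\qquad \lambda(t):=\int\mathbb D\rho\,\dy .
\]
Only the scalar $\lambda(t)$ remains nonlinear. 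I would set $h=e^{\int_0^t\lambda}g$, where $g$ solves the linear problem $\pa_tg+\vv\cdot\na g=\nu\mathcal L_\uu g$ with $g(0)=\rho_0/\Pi$. Mass conservation $\int\Pi h=1$ then forces $e^{\int_0^t\lambda}=1/\int\Pi g$, so
\[
\frac{\rho}{\Pi}-1=\frac{g-\int\Pi g}{\int\Pi g}.
\]
This also handles well-posedness, since the nonlinearity is only the explicit normalization.

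Next I would record the basic properties of $g$. The operator $-\vv\cdot\na+\nu\mathcal L_\uu$ has no zeroth-order term, so the maximum principle gives $0\le \min\rho_0/\Pi\le g\le\max\rho_0/\Pi$. If $\rho_0$ vanishes somewhere, I would instead bound $\int\Pi g$ from below: this quantity equals $\int\rho_0$ at $t=0$ and changes only at rate $O(\nu\|g\|)$ together with the decaying fluctuation. It therefore suffices to show that the fluctuation $g-\bar g$, with $\bar g$ the spatial average, decays in $L^2$ like $C e^{-\delta\nu^{1/2}t}$. This forces $\|g-\int\Pi g\|_{L^2}$ to decay at the same rate, and $\int\Pi g$ stays bounded below by a positive constant depending on $\rho_0$ and $\uu$. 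All of these bounds enter $C_\dagger$.

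The core is an enhanced-dissipation estimate. Pair the coordinates, which uses $d\in2\mathbb N$, and choose $\vv$ piecewise-in-time, smoothly switched, as alternating shears: for example $\sin(x_{2i})\,e_{2i-1}$ on one time window and $\sin(x_{2i-1})\,e_{2i}$ on the next, for all pairs $i$ simultaneously. These fields are divergence-free and $C^\infty$. For each shear with nondegenerate critical points, the known hypocoercivity/resolvent argument gives a decay rate $\nu^{1/2}$ for modes that are non-constant in the sheared variable. Alternating the shears kills every mode except the global constant. I would run this in plain $L^2(\dx)$, where the transport is antisymmetric because $\na\cdot\vv=0$, and treat $\nu\mathcal L_\uu=\nu\de-\nu\na\uu\cdot\na$ as $\nu\de$ plus a first-order perturbation of size $\nu\|\uu\|_{C^2}$.

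I expect the main obstacle to be making the perturbation argument close. The drift term $-\nu\na\uu\cdot\na g$ is not dissipative in $L^2(\dx)$. In the hypocoercive functional (of the form $\|g\|^2+\alpha\nu^{1/2}\langle\cdot\rangle+\dots$) its contribution has to be absorbed by the $\nu\|\na g\|^2$ gain, up to lower-order terms $C\nu\|g-\bar g\|^2$, and the drift also shifts the mean $\bar g$. The plan is to choose the window lengths $\sim\nu^{-1/2}$ and restrict to $\nu\le\nu_0(\rho_0,\uu)$, so that these $O(\nu)$ errors are dominated by the $\delta\nu^{1/2}$ decay. The mean drift is then controlled by integrating the decaying fluctuation.
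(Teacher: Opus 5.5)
Your reduction matches the paper's. Your $g$ is exactly the renormalized $H$ of Step 4a, and your window-by-window treatment of the alternating shears, with $-\nu\na\uu\cdot\na g$ as a perturbation, is Step 4b. Your identity $g-\int\Pi g=\wt g-\int\Pi\wt g$ is the mass-conservation argument of Step 1. Replacing the paper's bootstrap on $\int_0^tQ$ by the explicit normalization $e^{\int_0^t\lambda}=1/\int\Pi g$ is a legitimate simplification: $g$ solves a linear equation, so its decay needs no a priori control of $\lambda$.

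The perturbation step is also easier than you fear. You need not put the drift inside a hypocoercive functional, where $\na\uu(\bx)$ would couple the $\bk$-modes. It suffices to compare $\wt g$ on each window with the passive scalar $\wt\eta$ started from $\wt g(T_i)$. Move the derivative in $\nu\int(\wt g-\wt\eta)\,\na\uu\cdot\na\wt g\,\dx$ onto the difference, and you get $\|\wt g-\wt\eta\|_{L^2}\lesssim\nu^{1/4}\delta^{-1/2}\|\wt g(T_i)\|_{L^2}$.

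The gap is the $\nu$-independent lower bound on $\int\Pi g$ when $\rho_0$ vanishes somewhere, which the hypothesis $0\le\rho_0$ allows. You assert that $\int\Pi g$ changes only at rate $O(\nu\|g\|)$ plus a decaying-fluctuation contribution. In fact $\nu\mathcal L_\uu$ is exactly conservative for the $\Pi$-weighted integral, but the transport is not:
\[
\frac{d}{dt}\int\Pi g\,\dx=\int g\,\vv\cdot\na\Pi\,\dx=\int\wt g\,\vv\cdot\na\Pi\,\dx ,
\]
and this has no factor $\nu$. Since $\int_0^\infty\|\wt g\|_{L^2}\dt\sim\delta^{-1}\nu^{-1/2}\|\wt g_0\|_{L^2}$, nothing keeps $\int\Pi g$ near $\int\rho_0=1$. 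Indeed $\int\Pi g=e^{-\int_0^t\lambda}$, and $\int_0^t\lambda$ is genuinely of order one. The decay of $\wt g$ does not rescue intermediate times either: $\int\Pi g=\bar g+\int\Pi\wt g$, and there $|\int\Pi\wt g|$ is only $O(\|\wt g_0\|_{L^2})$.

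What is nearly conserved is the unweighted mean. Divergence-free transport preserves $\int g\,\dx$, and $\frac{d}{dt}\int g\,\dx=\nu\int\wt g\,\de\uu\,\dx$, so $|\bar g(t)-\bar g(0)|\lesssim\nu^{1/2}\delta^{-1}\|\de\uu\|_{L^2}\|\wt g_0\|_{L^2}$. Combining this with $g\ge0$ gives, for $\nu\le\nu_0(\rho_0,\uu)$,
\[
\int\Pi g\,\dx\ge(\min\Pi)\int g\,\dx\ge\tfrac12(\min\Pi)\int\rho_0\Pi^{-1}\dx\ge\frac{\min\Pi}{2\max\Pi}.
\]
This is precisely the content of the paper's Step 3, via $\|\Pi\|_{L^\infty}^{-1}\le\|h\|_{L^1}$ and the smallness of the $\nu\int\wt h\,\de\uu$ term. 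It is also where the $\rho_0$-dependence of $\nu_0$ genuinely enters.
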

{For the classical Langevin dynamics \eqref{MCMC}, the expected convergence time scale is $\mathcal{O}(\nu^{-1})$. By contrast, the convergence time scale for the model \eqref{EQ:1st_Ord} is $\mathcal{O}(\nu^{-\frac12})$, which is significantly shorter than the classical time scale in the transport-dominated regime $\nu\ll 1$. This is the so-called \emph{enhanced-dissipation/relaxation-enhancement phenomenon} in the literature \cite{BCZ15,ConstantinEtAl08}, and we will refer to $\nu^{\f12}$ as the  \emph{enhanced sampling rate}. In the work \cite{christie2025speeding}, by incorporating fast-alternating randomized drift, the authors can achieve an even faster theoretical enhanced sampling rate $|\log\nu|^{-1}$. However, our model is simpler and more flexible in the sense that a large family of time-dependent vector fields $\vv$ ensures the fast convergence stated in \eqref{eq:relative-estimate}. In this paper, we focus on the time-periodic alternating sine-shear flows of period $\nu^{-1/2}$, see \eqref{u_ell}. Our argument extends to other vector fields $\vv$ exhibiting the enhanced dissipation outlined in \eqref{ED_V}, e.g., the randomized Pierrehumbert flows analyzed in  \cite{BlumenthalCotiZelatiGvalani23,christie2025speeding} serve as natural alternatives.}

{Motivated by Theorem \ref{thm_1st_order}, one seeks effective discretizations of the dynamics  \eqref{EQ:1st_Ord} which would lead to fast sampling/optimization algorithms. One such discretization, based on a particle system, is proposed in
Appendix \ref{sec:App-D}.} 
Another approach, which will be our main focus, is to simulate the PDE dynamics directly. We emphasize that even though the $\mathbb D$-alignment term on the right of \eqref{EQ:1st_Ord} involves integration
$\int \rho\, \vv\cdot \na {\uu} \dy$, \emph{no quadrature is needed} to access the solution  $\rho(t)$ of the PDE. Indeed, it can be verified, see Lemma \ref{lem:Connections}, that  
$ \rho(t,\cdot)=\frac{\widehat \rho(t,\cdot)}{\|\widehat \rho(t,\cdot)\|_{L^1}}$, where $\widehat \rho(t,\cdot)$ satisfies the following \emph{simplified first-order dynamics},
\begin{align}\label{EQ:1st_Ord_sim}\quad&\begin{cases}&\displaystyle\pa_t \widehat\rho+ \vphantom{\int}\vv \cdot\na_\bx \widehat\rho  = {\vphantom{\int}\nu\de_\bx\widehat \rho+\nu\na_\bx\cdot(\widehat \rho \na_\bx{\uu})}    -\widehat\rho\vv\cdot \na_\bx {\uu} ,\\
&\na_\bx\cdot\vv(t,\bx)=0,\quad\widehat \rho(t=0,\bx)=\rho_{0}(\bx),\quad (t,\bx)\in \rr_+\times \Torus^d.   \end{cases}
\end{align}
Furthermore,  a separate quadrature-free \emph{mass-searching dynamics} will be introduced to compute the total mass $\|\widehat \rho\|_{L^1}$ at any fixed time $t$ (Section \ref{sec:numerics}). 
Consequently, the simulation of the dynamics \eqref{EQ:1st_Ord} can be decomposed into two steps: 
  \begin{enumerate}
  \item Simulate  the simplified dynamics \eqref{EQ:1st_Ord_sim} to obtain $\widehat\rho(t)$. 
  \item Use the ``mass-searching dynamics'' to determine its mass $\|\widehat\rho\|_{L^1}$ at a fixed time $t$.
   \end{enumerate} 
  By forming the quotient
   $\rho(t,\cdot)=\frac{\widehat \rho(t,\cdot)}{\|\widehat \rho(t,\cdot)\|_{L^1}}$, we end up with a   quadrature-free scheme to simulate $\rho$,  which in turn yields a rapidly convergent algorithm for sampling $\Pi$.
\subsection{The Second-Order Model}
One potential drawback of the first-order model is that the external drift controls $\vv$ are chosen manually in our context and depend on time. Numerical realizations of such vector fields can be challenging, see, e.g., \cite{christie2025speeding}. 
We next develop an alternative second-order sampling/optimization dynamics with a simpler, static external drift control. {
To set up the dynamics to sample $\Pi=Z^{-1}e^{-\uu}$, we consider  the kinetic state space $(\bx,\bp)$  with position $\bx$ and momentum  $\bp$. The assigned drift $\bv$ is determined by the  momentum, and the {drift-defect} $\mathbb{D}$ \eqref{D} is nontrivial unless the target $\Pi$ is constant, since \[
\mathbb{D}(\bx,\bp)=-e^{\uu(\bx)}\na_\bx\cdot(e^{-\uu(\bx)}\bv(\bp))=\bv(\bp)\cdot\na\uu(\bx).\] 
With these ingredients introduced, we consider the following second-order kinetic \emph{mass-preserving enhanced sampling} dynamics,}
\begin{align}
\label{EQ:2nd_Ord}\begin{cases}
&\displaystyle\pa_t f+\overbrace{\bv\cdot{\na_\bx} f
}^{\text{Transport}}= \overbrace{\nu\de_\bp f}^{\text{Diffusion}}+\overbrace{\kappa\de_\bx f+ \kappa\na_\bx\cdot\lf(f\na_\bx{\uu}\rg)}^{\text{Langevin}}+\overbrace{\mathcal{Q}_{\uu,\bv}[f]}^{\mathbb D\text{-alignment}},\phantom{\int}\\
&\displaystyle\mathcal{Q}_{\uu,\bv}[f](t,\bx,\bp)=\iint \lf(\D(\by,\bq)-\D(\bx,\bp)\rg)  f(t,\bx,\bp)f(t,\by,\bq) \textnormal{d}\by \textnormal{d}{\bq},\\
&\displaystyle f(t=0,\bx,\bp)=f_{0}(\bx,\bp),\quad (\bx,\bp)\in \Torus^d\times \mathbb{T}^{d}.  \phantom{\int} \end{cases}
\end{align}
In this model, the probability density $f\geq0$ describes the probability of finding agents at position $\bx$ moving with velocity $\bv(\bp):=(\sin(p_1),\sin(p_2),\ldots, \sin(p_d))\in [-1,1]^{d}$. The model is similar to \emph{underdamped Langevin dynamics} (see, e.g., \cite{CaoLuWang23,AlbrittonArmstrongMourratNovack24,
Baudoin17,CattiauxGuillinMonmarche19,Wu01,
Talay02,MattinglyStuartHigham02,NierHelffer05,
DolbeaultMouhotSchmeiser15,
HerauNier04,GrothausStilgenbauer13} and the references therein). 
We assume initial normalization, which will be preserved by the dynamics,
\begin{align}
\|f_{0}\|_{L_{\bx,\bp}^1}=\|f(t)\|_{L_{\bx,\bp}^1}=1,\quad \text{for all }t\geq 0 . \label{2nd_mss}
\end{align} 
The Langevin structure $\kappa\de_\bx f+\kappa\na_\bx\cdot(f\na_\bx \uu)$ helps stabilize the numerical simulation. 
The $\mathbb{D}$-alignment term $\mathcal{Q}_{\uu,\bv}[f]$ is essential to preserve the invariant measure $\Pi\otimes(|\Torus|^{-d}\rm  Lebesgue)$, and can be expressed as follows
\begin{align}
\mathcal{Q}_{\uu,\bv}[f](t,\bx,\bp)
=f(t,\bx,\bp)\Bigl( - \bv(\bp)\cdot\na_\bx {\uu}(\bx)+\iint f(t,\by,\bq)\, \bv(\bq)\cdot \na_\by {\uu}(\by)\; \dy \mr{d}{\bq} \Bigr).
\end{align}As in the model \eqref{EQ:1st_Ord}, the $\mathbb{D}$-alignment ensures mass conservation \eqref{2nd_mss}. 
In a broad sense, the dynamics \eqref{EQ:2nd_Ord} exhibit features similar to those found in second-order models in flocking dynamics (see, e.g., \cite{Vicsek95,CS07,MotschTadmor11, HaTadmor08,FrouvelleLiu12, Shvydkoy21,Shvydkoy2021,Shvydkoy24,GuHe24} and the references therein).

The motivation behind the second-order enhanced dynamics is that, by extending the problem's phase space, one can design a simple drift to speed up convergence. Once the solution is sufficiently close to the target, a hydrodynamic averaging operation yields an approximation of the target measure. Let us consider the following hydrodynamic density:
\begin{align}\label{hydrodnsty}
[f](t,\bx):=\int_{\mathbb{T}^{d}} f(t,\bx,\bp)\dpp.
\end{align}
Our second main theorem states that the hydrodynamic density converges to the target density $\Pi$ at a fast rate. 
\begin{theorem}\label{thm_2nd_order}
Consider equation $\eqref{EQ:2nd_Ord}$ with smooth, normalized initial data $0\le f_{0}\in C^\infty(\Torus^d_\bx\times\Torus^d_\bp)$   satisfying $\|f_0\|_{L^1_{\bx,\bp}}=1$, and with a positive target density $0<\Pi\in C^\infty(\Torus^d)$ ($1\leq d\in\mathbb N $).  There exists a threshold $\nu_0(
d,\uu, f_{0})$ such that for any $0<\nu\leq \nu_0$,  and any $\kappa\in(0,\nu]$, the following estimate holds
\begin{align}
\mathrm{Sample: }\;&\lf\|[f](t)-{\Pi} \rg\|_{L_{\bx}^1}\leq 
\mathcal{C}_1(d,f_0,\uu)\|{\Pi}\|_{L_{\bx}^\infty}\|f_{0} e^\uu\|_{L^2}e^{-\delta\nu^{1/2} t},\quad \forall t\geq 0;\phantom{\int}
\label{Sample_2nd}\\
\mathrm{Approximation: }\;&\lf\|[f](t)-{\Pi} \rg\|_{L_{\bx}^\infty}\leq 
\mathcal{C}_2(d,f_0,\uu,Z)e^{-{\delta_2}\nu^{1/2} t},\quad \forall t\geq 0. \phantom{\int}\label{App_2nd}
\end{align} The parameter $\delta$ {is independent of the dimension $d$ and diffusion coefficient  $\nu$}, but the parameter $\delta_2$ depends on $d$. 
Moreover, for an arbitrary $\varsigma>0$ and any test function $\varphi\in H^{\sigma}_\bx$, the following mixing estimate holds for all $t\in (0,\delta^{-1}\nu^{-1/2}]$:
\begin{align}\label{Sample_2ndMix}
\lf|\int \varphi ( \bx )  \Pi ( \bx )  \dx - \int \varphi(\bx) [f](t,\bx) \dx\rg|\leq \frac{\mathcal{C}_3(d,\varsigma,\Pi, f_0)}{t^{1/2}}\|\varphi\|_{H_\bx^{\sigma}}, \qquad \sigma=\max\{1,d/2+\varsigma\}.
\end{align} 
\end{theorem}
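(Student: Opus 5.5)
The plan is to reduce the nonlinear, mass-preserving dynamics \eqref{EQ:2nd_Ord} to a linear kinetic equation, exactly as in Lemma \ref{lem:Connections} for the first-order model. Set $\widehat f$ to be the solution of the linear equation
\[
\pa_t \widehat f+\bv\cdot\na_\bx\widehat f=\nu\de_\bp\widehat f+\kappa\de_\bx\widehat f+\kappa\na_\bx\cdot(\widehat f\na_\bx\uu)-\widehat f\,\bv(\bp)\cdot\na_\bx\uu,\qquad \widehat f(0)=f_0 .
\]
A direct computation shows that $f=\widehat f/\|\widehat f\|_{L^1_{\bx,\bp}}$, because the nonlocal term in $\mathcal{Q}_{\uu,\bv}$ only contributes the factor $\frac{\textnormal{d}}{\dt}\log\|\widehat f\|_{L^1}$. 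Next pass to the ratio $h=\widehat f e^{\uu}$, i.e. $\widehat f=h\,e^{-\uu}$. Then $h$ solves
\[
\pa_t h+\bv(\bp)\cdot\na_\bx h=\nu\de_\bp h+\kappa\, e^{\uu}\na_\bx\cdot(e^{-\uu}\na_\bx h).
\]
This is a transport--diffusion equation with a shear-type, $\bp$-dependent transport $\bv(\bp)=(\sin p_j)_j$ and a symmetric dissipation in $L^2(e^{-\uu}\dx\,\dpp)$. Indeed, the transport is antisymmetric in that weighted space, since $\na_\bx\cdot(e^{-\uu}\bv)\,h$ cancels against the defect term. Hence the $x$-average $\bar h=\int h\,e^{-\uu}\dx\dpp/\int e^{-\uu}\dx\dpp$ is conserved, and the weighted $L^2$ norm is nonincreasing.

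The core step is enhanced dissipation for $h-\bar h$ at rate $\nu^{1/2}$. I would first treat $\kappa=0$. Fourier transforming in $\bx$ decouples the problem into modes $\bk\neq0$, each solving $\pa_t\hat h_\bk+i\bk\cdot\bv(\bp)\hat h_\bk=\nu\de_\bp\hat h_\bk$ on $\Torus^d_\bp$. Since $\bk\cdot\bv(\bp)=\sum k_j\sin p_j$ has only nondegenerate (Morse) critical points, I would run a hypocoercivity argument in the style of Bedrossian--Coti Zelati. The functional is
\[
\Phi=\|\hat h\|^2+\alpha\nu^{1/2}\|\na_\bp\hat h\|^2+\beta\,\mathrm{Re}\lan i\na_\bp(\bk\cdot\bv)\hat h,\na_\bp\hat h\ran+\gamma\nu^{-1/2}\|\na_\bp(\bk\cdot\bv)\hat h\|^2,
\]
with weights scaled in $|\bk|$. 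Combining this with a spectral-gap type inequality $\|\na_\bp(\bk\cdot\bv)g\|^2+\nu^{1/2}\|\na_\bp g\|^2\gtrsim\nu^{1/2}|\bk|^{\cdot}\|g\|^2$, which is valid because the critical points are nondegenerate, gives $\Phi(t)\le e^{-\delta\nu^{1/2}t}\Phi(0)$ uniformly in $\bk$. Dimension-independence of $\delta$ should follow from the product structure $\sin p_j$.

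The obstacle, which I expect to be the main difficulty, is that the weight $e^{-\uu}$ and the $\kappa$-Langevin term destroy the Fourier decoupling. I would handle this by working directly in $L^2(e^{-\uu})$ with $\na_\bx$-derivatives in place of Fourier multipliers. The commutators of $\na_\bx$ with $e^{\uu}\na\cdot(e^{-\uu}\na)$ are lower order, of size $\kappa\|\uu\|_{C^2}$, and since $\kappa\le\nu$ they are absorbed for $\nu\le\nu_0(d,\uu,f_0)$. Meanwhile $\bar h$ remains the only invariant mode. This yields
\[
\|h(t)-\bar h\|_{L^2(e^{-\uu})}\le C\|f_0e^{\uu}\|_{L^2}e^{-\delta\nu^{1/2}t}.
\]

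Finally, translate back. Note $[\widehat f]=e^{-\uu}[h]$, while $\|\widehat f\|_{L^1}=\bar h\,Z\,|\Torus|^d$ up to the exponentially small error. The quotient then gives \eqref{Sample_2nd} via Cauchy--Schwarz, which produces the factor $\|\Pi\|_{L^\infty}$. For \eqref{App_2nd} I would propagate the same hypocoercive decay in $H^s_\bx$ with $s>d/2$ and use Sobolev embedding; this $d$-dependence is what makes $\delta_2$ depend on $d$. For the mixing estimate \eqref{Sample_2ndMix}, on $t\lesssim\nu^{-1/2}$ I would instead use pure phase-mixing of the free transport. Stationary phase in $\bp$ gives $|\int e^{-it\bk\cdot\bv(\bp)}g\,\dpp|\lesssim (|\bk|t)^{-1/2}\|g\|_{W^{1,\cdot}}$, since the decay is at least along one nondegenerate direction. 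Summing over $\bk$ against $\hat\varphi_\bk$ requires $\sum|\hat\varphi_\bk||\bk|^{-1/2}\lesssim\|\varphi\|_{H^\sigma}$, which holds for $\sigma>d/2$ (together with $\sigma\ge1$ for the commutator errors). The viscous corrections are controlled by Duhamel over the times $t\le\delta^{-1}\nu^{-1/2}$.
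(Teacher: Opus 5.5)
Your reduction to the linear equation for $h=\widehat f e^{\uu}$ is fine; it is the paper's renormalized $H$ divided by $Z$. The enhanced-dissipation step built on it, however, contains two errors, and the mixing step has a separate gap.

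\textbf{First error: the weighted space.} The transport is \emph{not} antisymmetric in $L^2(e^{-\uu}\dx\dpp)$. Since $\na_\bx\cdot\bv=0$,
\begin{align*}
\iint (\bv\cdot\na_\bx h)\,h\,e^{-\uu}\dx\dpp=\frac12\iint (\bv\cdot\na_\bx\uu)\,h^2\,e^{-\uu}\dx\dpp ,
\end{align*}
and this does not vanish in general. The defect term has already cancelled when you derived the $h$-equation, which is why that equation has no zeroth-order term. What remains, $\bv\cdot\na_\bx h$, is antisymmetric only in unweighted $L^2$.

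For the same reason,
\begin{align*}
\ddt\iint h e^{-\uu}\dx\dpp=-\iint h\,(\bv\cdot\na_\bx\uu)\,e^{-\uu}\dx\dpp=\ddt\|\widehat f\|_{L^1},
\end{align*}
which is not zero; otherwise no normalization would be needed. So $\bar h$ is not conserved and the weighted norm is not monotone. These error terms carry no factor $\kappa$ or $\nu$, so working ``directly in $L^2(e^{-\uu})$'' cannot absorb them. Your absorption mechanism (errors of size $\kappa\|\uu\|_{C^2}$ with $\kappa\le\nu$) is the right one, but it works only in unweighted $L^2$. There the sole non-antisymmetric piece is $-\kappa\na_\bx\uu\cdot\na_\bx h$.

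\textbf{Second error: a scalar mean.} Even for constant $\uu$, the bound you aim for with a scalar $\bar h$ is false. For $f_0=g(\bp)$ the solution stays $\bx$-independent and evolves by $e^{\nu t\de_\bp}g$, so it relaxes only at rate $\nu$. Enhanced dissipation holds only for the $\bx$-fluctuation $\wt h=h-\lan h\ran_\bx(t,\bp)$. The $\bp$-dependent average must be handled separately: only $\int\lan h\ran_\bx\dpp$ enters $[f]$, and it drifts at rate $\kappa|\Torus|^{-d}\iint\wt h\,\de_\bx\uu$. This is what the paper does, proving decay of $\wt H$ by comparison with the passive scalar on windows of length $\delta^{-1}\nu^{-1/2}$.

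Also, $\bk\cdot\bv$ is not Morse when some $k_j=0$. The paper avoids this by using only the direction $m$ with $|k_m|=\max_j|k_j|$ in \eqref{hyp_fnctnl}.

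\textbf{Gap in the mixing estimate.} You cannot treat $\nu\de_\bp$ as a Duhamel perturbation of free transport up to $t\sim\delta^{-1}\nu^{-1/2}$. The free solution $\hat h_{0,\bk}(\bp)e^{-it\bk\cdot\bv(\bp)}$ has $\bp$-gradients of size $|\bk|t$. Duhamel, or an energy estimate for the difference, therefore bounds the viscous correction only by $C\nu^{1/2}|\bk|t^{3/2}$. That is below the target $t^{-1/2}$ only for $t\lesssim\nu^{-1/4}|\bk|^{-1/2}$.

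This loss is not technical. For $t\gg\nu^{-1/3}$ the viscous solution is damped at order one away from the critical points of $\sin p_m$, so the difference is genuinely $O(1)$ in $L^2$. Showing that its pairing with $F$ is still $O(t^{-1/2})$ is itself a viscous mixing estimate.

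The paper supplies this in Theorem \ref{thm:mix}. It uses vector fields adapted to the viscosity, $\Gamma^\pm_m=\pa_{p_m}+\A^\pm\cos(p_m)ik_m$ with $|\A^\pm|\approx\min\{t,(\nu|k_m|)^{-1/2}\}$ in place of $t$. It proves uniform bounds on $\chi_\pm\Gamma^\pm_m f_\bk$ and uses a $\nu$- and $t$-dependent cutoff around the critical points. This yields $\nu^{1/4}/\min\{1,\nu^{1/4}t^{1/2}\}$ uniformly in $\nu$. The $\kappa$-terms are then absorbed through the $O(\nu^{1/4})$ comparison \eqref{pssv_app}.
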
\vspace{0.1cm}
\begin{remark}
The integral $\int \varphi[f]\dx$ is the hydrodynamic observable that captures probabilistic information, e.g., means and variances. The mixing estimate \eqref{Sample_2ndMix} implies that the hydrodynamic observable converges to the target with a rate that is independent of the parameter $\nu$. 
\end{remark}
\begin{remark}
Other second-order models give similar results. We mention a
Vicsek-type model, which is adopted in our numerical test in Section \ref{sec:Numerical_Result}:
\begin{align}\label{sphere_model}
\displaystyle\begin{cases}
&\pa_t f+{\bp}\cdot{\na_\bx} f
= \nu\de_\bp f+\kappa\de_\bx f+ \kappa\na_\bx\cdot\lf(f\na_\bx{\uu}\rg)+\mathcal{Q}_{\uu,\bp}[f],\\
& \mathcal{Q}_{\uu,\bp}[f](t,\bx,\bp)=\iint\bigl( {\bq}\cdot \na_\by {\uu} - \bp\cdot\na_\bx {\uu}\bigr)\;f(t,\by,\bq)f(t,\bx,\bp)\,  \dy \dS_{\bq} ,\\
& f(t=0,\bx,\bp)=f_{0}(\bx,\bp),\quad (\bx,\bp)\in \Torus^d\times \mathbb{S}^{d-1}.   \end{cases}
\end{align}
For this model, one can consider the corresponding hydrodynamic density $[f]=\int _{\mathbb{S}^{d-1}}f \dS_\bp$ and apply ideas from this paper to derive the sampling and approximation estimates. 
\end{remark}

Similar to the first-order case, we consider the following second-order \emph{simplified enhanced sampling} dynamics:

\begin{align}
\label{EQ:2nd_Ord_sim}&\begin{cases}
&\displaystyle\pa_t \widehat f+{\bv}\cdot{\na_\bx} \widehat f
= \nu\de_\bp  \widehat f+\kappa\de_\bx \widehat f+ \kappa\na_\bx\cdot\lf( \widehat f\na_\bx{\uu}\rg)
 - \widehat f \bv\cdot\na_\bx {\uu},\\
&\quad  \widehat f(t=0,\bx,\bp)= f_{0}(\bx,\bp),\quad (\bx,\bp)\in \Torus^d\times \mathbb{T}^{d}.   \end{cases}
\end{align} The solutions of \eqref{EQ:2nd_Ord} and  \eqref{EQ:2nd_Ord_sim} are related by the formula: $ f= \frac{\widehat f}{\|\widehat f\|_{L^1}}. $ Hence, one can use a similar scheme as in the first-order enhanced model (simplified dynamics \eqref{EQ:2nd_Ord_sim} + ``mass-searching dynamics'') to simulate the PDE dynamics \eqref{EQ:2nd_Ord}. 
\subsection{Analysis of the Models} 
To motivate the mathematical tools applied in our analysis, we would like to consider the simplest possible target function, 
\begin{align}
\Pi_0(\bx)\equiv \frac{1}{|\Torus^d|}=\text{constant},\quad \bx \in\Torus^d, \quad d\geq 2.
\end{align} 
{We want to address the following  key question.}

\smallskip
{\centering
\fbox{ \begin{varwidth}{\linewidth}{\bf Question ${\mathscr A}$:} Fix arbitrary initial data that is sufficiently smooth.  Is it possible to design a dynamical system whose solution converges to the constant state $\Pi_0$ within a short time?\end{varwidth}}
}

\smallskip
We propose two 
{solutions} to address the above problem, which lead, respectively, to the first- and second-order models we design.

\noindent{\bf 
{Approach} \#1: Introducing time-dependent flows. }Incorporate the time-dependent divergence-free vector fields $\vv(t,\bx)$, and keep track of the passive scalar solution, 
\begin{align}\label{PS_intro_1}
\pa_t \rho+\na_\bx \cdot(\vv\rho)=\nu\de \rho,\quad \na_\bx\cdot\vv\equiv 0,\quad \rho(t=0)=\rho_0.
\end{align} 
Define the average $\dss\overline{\rho}:=\frac{1}{|\Torus^d|}\int\rho \dx$. The standard $L^2$-energy estimate shows that the norm $\lf\|\rho-\overline{\rho}\rg\|_{L^2}(t)$ decays to zero at a rate of $\mathcal{O}(\nu)$. It is a classical problem in the fluid mechanics community to identify suitable fluid vector fields $\vv$ that improve the decay rate of the solution. These flows are commonly referred to as \emph{relaxation enhancing} flows. We refer interested readers to the classical paper \cite{ConstantinEtAl08} for a precise definition. This phenomenon of flow-induced acceleration of dissipation is also called the \emph{enhanced dissipation}. Later research also shows that the \emph{enhanced dissipation} has a direct connection to the \emph{mixing} phenomenon (see, e.g., \cite{ElgindiCotiZelatiDelgadino18,FengIyer19} and the expository article \cite{CotiZelatiCrippaIyerMazzucato23}). Specifically, smooth, exponentially mixing vector fields $\vv$ yield a solution $\rho$ of \eqref{PS_intro_1} with rapid decay 
\begin{align}
\|\rho(t)-\overline{\rho}\|_{L^2}\leq C\|\rho_0-\overline{\rho_0}\|_{L^2}e^{-\delta|\log\nu|^{-2}t}.\label{Exp_ED}
\end{align} We observe that if $0<\nu\ll1$, the decay rate here is much faster than $\nu$. 
These exponentially mixing vector fields (such as the Pierrehumbert flow \cite{Pierrehumbert94} and the stochastic Navier-Stokes solutions considered in  \cite{BedrossianBlumenthalPunshonSmith19}) have recently been identified using probabilistic methods (see, e.g., \cite{BlumenthalCotiZelatiGvalani23,ElgindiLissMattingly23
}).  
 One can design fast sampling algorithms by incorporating these random, exponentially mixing flows into the sampling dynamics. This is essentially the idea presented in the pioneering paper \cite{christie2025speeding}.

However, it has also been pointed out in \cite{christie2025speeding} that applying exponentially mixing flows can lead to numerical challenges in sampling because the exponentially mixing flows constructed in the literature are random and vary drastically over time. For example, to generate the exponential decay in \eqref{Exp_ED}, the random vector fields $\vv$ will need to switch orientation over time intervals of length $\mathcal{O}(1)$. When $\nu\ll1$, these intervals are much shorter than the classical decay time scale of the heat equation, which is $\mathcal{O}(\nu^{-1})$. 

Our proposal to resolve the problem above is to consider a slowly varying, \emph{deterministic} alternating shear flow $\vv$. Since the construction is straightforward for even dimensions, we assume that $d=2\dd$ for some $\dd\in\mathbb{N}\backslash\{0\}$. The building blocks for this alternating shear are the following 
\begin{align}\label{u_ell}
\mathbf{u}_{1}(\{x_{\dd+j}\}_{j=1}^{\dd})= \sum_{j=1}^{\dd}\mathcal{U}(x_{\dd+j}){\bf e}_j ,\quad\mathbf{u}_{2}(\{x_j\}_{j=1}^{\dd})= \sum_{j=1}^{\dd}\mathcal{U}(x_j){\bf e}_{\dd+j} .
\end{align} Here, the set $\{\vec{e}_j\}_{j=1}^d$ is the collection of the canonical basis vectors of $\rr^d$ and $\mathcal{U}\in C^\infty$. When we consider the passive scalar equation subject to one of these shear flows, 
\begin{align}
\pa_t \rho+\mathbf{u}_1\cdot\na_{\bx}\rho =\nu\de \rho,\quad 0<\nu\leq1. 
\end{align}
There will be two distinct components of the solution.  The first is the stream-line average in the flow direction $\dss\lan \rho\ran_{1}:=|\Torus|^{-\dd}\int \rho(t,\bx)\mr dx_1\cdots\mr dx_\dd$ and the other is the fluctuation $\rho-\lan \rho\ran_{1}$. In two dimensions $(d=2)$, it is well-established that the fluctuation experiences \emph{enhanced dissipation} under suitable structural assumptions on the shear profile $\mathcal{U}$. The study of this shear flow-induced enhanced dissipation (and related hypoelliptic smoothing) can be traced back to the nineteenth-century work of Lord Kelvin \cite{Kelvin87}, 
and to A. Kolmogorov \cite{Kolmogoroff34} and L. H\"ormander \cite{Hormander67}.   In recent years, the sharp enhanced dissipation estimate has been obtained for general shear flows. Assuming that $d=2 $, it is shown in the works \cite{BCZ15,Wei18} that if the shear flow profile $\mathcal{U}$ has only finitely many non-degenerate critical points, the enhanced dissipation holds. In particular, if $\mathcal{U}=\sin$, the following decay holds for all $0<\nu\leq 1$,
\begin{align}\label{ED_sine}
\|\rho(t)-\lan{\rho}\ran_{\ell}(t)\|_{L^2}\leq C_{\sin}\|\rho_0-\lan{\rho_0}\ran_{\ell}\|_{L^2}e^{-\delta_{\sin}\nu^{1/2}t}, \quad \mathcal{U}(\cdot)=\sin(\cdot),\quad \forall t\geq 0.
\end{align}
Here, the constants $ C_{\sin}\geq 1,\,  \delta_{\sin}\in(0,1)$ are independent of the diffusion coefficient $\nu$. The $\nu^{1/2}$-rate has been proved to be sharp in \cite{CotiZelatiDrivas19}. We refer interested readers to the related works \cite{AlbrittonBeekieNovack21,
CobleHe23,
GardnerLissMattingly24,FengMazzucatoNobili23,
CotiZelatiGallay21}.

Finally, by alternately applying the shear flows $\mathbf{u}_1$ and $\mathbf{u}_2$ \eqref{u_ell} (with profile $\mathcal{U}=\sin$) on time intervals of length $\mathcal{O}(\nu^{-1/2})$, one establishes enhanced dissipation on $\mathbb{T}^2$:
\begin{align}\label{ED_intro_22}
\|\rho(t)-\overline{\rho}\|_{L^2}\leq C\|\rho_0-\overline{\rho_0}\|_{L^2}e^{-\delta\nu^{1/2}t}.
\end{align} This result was first derived in \cite{HeKiselev21}. In this paper, we derive the higher-dimensional analog. 

\begin{theorem}[Enhanced Dissipation]\label{thm:lnrED_AltS}
Consider the passive scalar equation in even dimension,
\begin{align}\label{eq:PS}
\pa_t \eta+\vv(t)\cdot \na_\bx \eta=\nu\de_\bx \eta,\quad \eta(t=0)=\eta_0\in L^2(\Torus^d),\quad d=2\dd. 
\end{align}
Let the time-dependent flow $\vv$ be the linear combination of $\eqref{u_ell}_{\mathcal{U}=\sin}$, 
$
\vv(t,\bx)=\sum_{i=1}^2 \Phi^{(i)}(t)\mathbf{u}_i(\bx).$ 
There exists a family of coefficient functions $\{\Phi^{(i)}\}_{i=1}^2$ that vary on a time scale $\mathcal{O}(\nu^{-1/2})$ such that the following enhanced dissipation estimate holds for any $0<\nu\leq 1$:  
\begin{align}\label{ED_V}\ \
\lf\|\eta(s+t,\cdot)-\overline{\eta}(s+t)\rg\|_{L_\bx^2(\Torus^d)}\leq C_{\rm ED}\lf\|\eta(s,\cdot)-\overline{\eta}(s)\rg\|_{L_\bx^2(\Torus^d)}e^{-\delta_{\rm ED} \nu^{1/2}t},\quad\forall s,t\in\rr_+. 
\end{align}
Here, $\delta_{\rm ED}\in(0,1)$ and $C_{\rm ED}\geq 1$ are constants that are independent of the solution and ${d},\nu$.  
\end{theorem}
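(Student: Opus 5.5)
The proof treats each shear phase separately and then alternates them. On a phase where $\vv=\Phi^{(1)}\mathbf u_1$ with $\Phi^{(1)}\equiv 1$, the equation \eqref{eq:PS} reads
$$\pa_t\eta+\sum_{j=1}^{\dd}\sin(x_{\dd+j})\,\pa_{x_j}\eta=\nu\de_\bx\eta .$$
Taking the Fourier transform in $(x_1,\dots,x_\dd)$ with frequency $\mathbf k=(k_1,\dots,k_\dd)\neq 0$, this becomes
$$\pa_t\wh\eta_{\mathbf k}+i\sum_{j=1}^{\dd} k_j\sin(x_{\dd+j})\,\wh\eta_{\mathbf k}=\nu\bigl(\de_{x_{\dd+1},\dots,x_{2\dd}}-|\mathbf k|^2\bigr)\wh\eta_{\mathbf k}.$$
The operator is a sum of $\dd$ commuting one-dimensional operators $L_{k_j}=\nu\pa_{y_j}^2-ik_j\sin y_j$, one in each variable $y_j=x_{\dd+j}$, plus the scalar $-\nu|\mathbf k|^2$. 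The semigroup therefore factorizes as a tensor product: $e^{tL}=\bigotimes_j e^{tL_{k_j}}\,e^{-\nu|\mathbf k|^2t}$.

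For $k_j\ne0$, the two-dimensional sine-shear estimate \eqref{ED_sine} of \cite{BCZ15,Wei18}, applied to a single Fourier mode with rescaled viscosity $\nu/|k_j|$ (equivalently, the standard resolvent/hypocoercivity bound), gives
$$\|e^{tL_{k_j}}\|\le C_{\sin}\,e^{-\delta_{\sin}\nu^{1/2}|k_j|^{1/2}t}\le C_{\sin}\,e^{-\delta_{\sin}\nu^{1/2}t}.$$
For $k_j=0$ the factor is the contraction heat semigroup. Since $\mathbf k\neq0$, at least one $j$ has $k_j\neq0$, and every other factor is a contraction. So each nonzero $\mathbf k$ mode decays with constant $C_{\sin}$ and rate $\delta_{\sin}\nu^{1/2}$, independent of $\dd$, and Plancherel gives
$$\|\eta(t)-\lan\eta\ran_1(t)\|_{L^2}\le C_{\sin}\,\|\eta_0-\lan\eta_0\ran_1\|_{L^2}\,e^{-\delta_{\sin}\nu^{1/2}t}.$$
The symmetric statement holds for $\mathbf u_2$ with the average $\lan\cdot\ran_2$ over $(x_{\dd+1},\dots,x_{2\dd})$. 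The main obstacle is making sure the constant does not pick up a factor $C_{\sin}^{\dd}$; the tensor structure avoids this, because only one factor is used for decay.

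Next, fix $\tau=A\nu^{-1/2}$ with $A$ large enough that $C_{\sin}e^{-\delta_{\sin}A}\le 1/4$. Choose smooth $\Phi^{(1)},\Phi^{(2)}$, periodic with period $2\tau$, so that on each window only one shear is active with coefficient $1$ on most of the window, with smooth cutoffs in short transition layers. Energy is nonincreasing at all times, since $\vv$ is divergence free. Write $P_i$ for the projection onto $\lan\cdot\ran_i$-averages and $\eta^\perp=\eta-\overline\eta$. Over a $\mathbf u_1$ window, the part $(I-P_1)\eta^\perp$ shrinks by a factor $1/4$. The part $P_1\eta^\perp$ depends only on $(x_{\dd+1},\dots,x_{2\dd})$ and has zero total mean. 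On this part $\mathbf u_1\cdot\na$ vanishes, so it just solves the heat equation, and the evolution stays orthogonal.

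At the start of the next $\mathbf u_2$ window, note that a function of $(x_{\dd+1},\dots,x_{2\dd})$ with zero mean is orthogonal to all $\lan\cdot\ran_2$-averages, because $P_2P_1=$ total mean. So $P_1\eta^\perp$ lies entirely in the range of $I-P_2$, and it decays by the factor $1/4$ during the $\mathbf u_2$ window. Combining the two windows,
$$\|\eta^\perp((n+2)\tau)\|\le\bigl(\tfrac14+\tfrac14+\text{cross terms}\bigr)\|\eta^\perp(n\tau)\|\le \tfrac34\,\|\eta^\perp(n\tau)\|,$$
after a bookkeeping with the triangle inequality and enlarging $A$ if needed. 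This is the argument of \cite{HeKiselev21}. Together with energy monotonicity between windows, it gives \eqref{ED_V} for arbitrary starting time $s$, with $\delta_{\rm ED}\sim A^{-1}\log(4/3)$ and $C_{\rm ED}$ absolute.

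The transition layers, where the coefficients are not $0$ or $1$, need separate handling. I would either take them of length $O(1)$ and use $L^2$ monotonicity there, or use piecewise-constant coefficients smoothed on a negligible set. Both choices leave the constants independent of $d$ and $\nu$.
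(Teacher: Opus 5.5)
Your proof is correct. Its second half, the alternation, is essentially the paper's argument, following \cite{HeKiselev21}. The bookkeeping differs only slightly. The paper bounds $\lan\eta\ran_\by(\mathscr T)$ by the already dissipated fluctuation $\|\eta-\lan\eta\ran_\bx\|(\mathscr T)$. You split $\eta^\perp=(I-P_1)\eta^\perp+P_1\eta^\perp$ with $P_1\eta^\perp\in\ker P_2$ and evolve the two pieces separately. By linearity and $L^2$-monotonicity this gives the factor $\tfrac14+\tfrac14$ directly, and no cross terms arise.

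The genuine difference is the single-shear estimate on $\Torus^{2\dd}$.
\begin{itemize}
\item The paper proves it (Theorem \ref{thm:ED}) directly in $\dd$ variables. It uses a hypocoercivity functional $\mathcal{G}$ built only on the maximizing direction $y_m$ and the spectral inequality of Lemma \ref{lem:spec}. A cancellation removes the contributions of the directions $l\neq m$.
\item You observe that $\nu\Delta_\by-i\bk\cdot u_1(\by)=\sum_j(\nu\partial_{y_j}^2-ik_j\sin y_j)$ is a sum of commuting one-variable operators. So the modal semigroup is a tensor product: decay in one factor with $k_j\neq0$, together with contractivity of the others, gives the bound.
\end{itemize}
Your route is shorter and modular. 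It makes the $\dd$-independence of the constants transparent and transfers verbatim to any profile $\mathcal U$ for which the two-dimensional estimate is known. The paper's route keeps everything in one $\dd$-dimensional framework centered on $y_m$, which it reuses for the vector fields $\Gamma_m^\pm$ in the mixing estimate, Theorem \ref{thm:mix}. You can also recover the paper's modal rate $\nu^{1/2}|k_m|^{1/2}$ by applying the one-dimensional hypoelliptic bound (Theorem \ref{thm:ED} with $\dd=1$) in the factor $j=m$.

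Two small tidy-ups.
\begin{itemize}
\item Your displayed bound $\|e^{tL_{k_j}}\|\le C_{\sin}e^{-\delta_{\sin}\nu^{1/2}|k_j|^{1/2}t}$ is not literally \eqref{ED_sine} after rescaling $\nu\to\nu/|k_j|$. Because \eqref{ED_sine} has the full Laplacian, the rescaling leaves an extra factor $e^{\nu t}$. That factor is absorbed by the $e^{-\nu|\bk|^2t}$ you kept, since $|\bk|\geq1$. Alternatively, attach $e^{-\nu k_j^2t}$ to each tensor factor; then \eqref{ED_sine} applies exactly as stated, with rate $\delta_{\sin}\nu^{1/2}$, which is all you need.
\item Ramps of length $O(1)$ make the coefficients vary on an $O(1)$ time scale there. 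To match the slow-variation intent of the statement, take ramps of length comparable to $\nu^{-1/2}$, as the paper does. Your $L^2$-monotonicity argument handles ramps of any length.
\end{itemize}
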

\begin{proof}
The proof of Theorem \ref{thm:lnrED_AltS} is detailed in Section \ref{sec:ED_alt_sf}.
\end{proof}

\begin{remark}
The schematic picture of the coefficient functions $\{\Phi^{(i)}\}_{i=1}^2$ is shown in Figure \ref{fig:Cutoff} and the construction is detailed in Section \ref{sec:ED_alt_sf}.
\begin{figure}[h]
\includegraphics[scale=1]{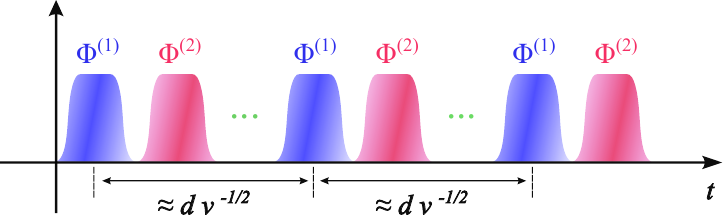} 
\caption{The coefficient functions $\Phi^{(i)}$}
\label{fig:Cutoff}
\end{figure}
\end{remark}
\begin{remark}
The parameter $\delta$ in Theorem \ref{thm_1st_order} is closely connected to the parameters in Theorem \ref{thm:lnrED_AltS}:
\begin{align}
 \delta:=\frac{\delta_{\rm ED}}{2+\log C_{\rm ED}}<\delta_{\rm ED}.\label{chc_del}
 \end{align}
\end{remark}

\begin{remark}
We remark that our construction is robust in the sense that replacing the shear profile $\mathcal{U}$ by another function with finitely many nondegenerate critical points leaves the result unchanged. 
\end{remark}
\begin{remark}
One can extend the result to odd dimensions $d=2\dd+1$ by a small modification of the flow. We will detail the construction in Section \ref{sec:ED_alt_sf}. 
\end{remark}
In conclusion, we propose replacing the random exponentially mixing flow in the previous literature by slowly varying deterministic shear flows. Even though the enhanced dissipation rate \eqref{ED_intro_22} is slower than the rate in \eqref{Exp_ED}, it has the advantage that the flow fields do not change too drastically and hence have the potential to make the numerics simpler.

\noindent{\bf 
{Approach} \#2: Extend the dimension of the phase space and use simpler flows.} 

The second approach is to introduce the momentum variable $\bp\in \mathbb{T}^{d}$, and to consider dynamics
\begin{align}\label{PS_bp} 
\pa_t f+\bv(\bp)\cdot\na_\bx f=\nu\de_\bp f,\quad f(t=0)=f_0,\quad (\bx,\bp)\in \mathbb{T}_\bx^d\times \mathbb{T}_\bp^{d},\quad d=1,2,3,\cdots. 
\end{align} 
Here, the transport vector field is $\bv(\bp):=\sum_{j=1}^d \sin(p_j){\bf e}_{\bx,j}$, and $\{{\bf e}_{\bx,j}\}_{j=1}^d$ are the canonical basis vectors for $\mathbb{T}_\bx^d$. 
Consider the average (in the shearing direction) $\lan f\ran_\bx$ and the remainder $\wt f$, 
 \begin{align}\label{P_avg_tild}
& \lan f \ran_\bx(t,\bp):=\frac{1}{|\Torus|^d}\int_{\Torus^d} f(t,\bx,\bp)\dx,\quad \wt f(t,\bx,\bp)=\wt{\mathbb{P}}_\bx f(t,\bx,\bp):=f(t,\bx,\bp)-\lan f\ran_\bx(t,\bp).
 \end{align}
Similar to the estimate \eqref{ED_sine}, one can derive the estimate (Corollary \ref{cor:ED_full}, Section \ref{sec:ED_alt_sf})
\begin{align}\label{ED_bp_intro}
\|\wt{f}(t)\|_{L^2_{\bx,\bp}}\leq C\|\wt{f}_0\|_{L_{\bx,\bp}^2}e^{-{\delta}\nu^{1/2}t},\quad \forall t\geq0.
\end{align}
A direct consequence of the estimate \eqref{ED_bp_intro} is that the hydrodynamic density \eqref{hydrodnsty} converges to the equilibrium with enhanced dissipation rate $\nu^{1/2}$: 
$
\|[f](t)-\overline{[f]}\|_{L_\bx^2}\leq C\|\wt{f}_0\|_{L_{\bx,\bp}^2}e^{-\delta\nu^{1/2}t},\quad \forall t\geq0.
$

In conclusion, if the target density $\Pi_0\equiv \frac{1}{|\Torus|^d}$, both the first-order dynamics \eqref{PS_intro_1} and the second-order dynamics \eqref{PS_bp} provide fast-converging solutions towards $\Pi_0$. This settles question ${\mathscr A}$.  
{Next, we turn} to address the following key {question}.  

\smallskip
{\centering
\fbox{ \begin{varwidth}{\linewidth}{\bf Question ${\mathscr B}$:} Fix arbitrary initial data that is sufficiently smooth.  Is it possible to design a dynamical system whose solution converges to any given target density $\Pi(\bx)=\zz^{-1}e^{-\uu(\bx)}\in L^1_+\cap C^2(\Torus^d)$ within a short time?\end{varwidth}}
}

\smallskip
As it turns out, we have already introduced all the essential ingredients to resolve question ${\mathscr B}$. The missing piece to address is how to introduce nontrivial density information into the dynamics \eqref{PS_intro_1} and \eqref{PS_bp}. 

\noindent
{\bf a) First-order model:} First, we rewrite the dynamics \eqref{PS_intro_1} as follows:
\begin{align} 
\pa_t \rho-\nu\na_\bx\cdot\lf(\rho\na_\bx \log\lf(\frac{\rho}{\Pi_0}\rg)\rg)=-\Pi_0\na_\bx \cdot\lf(\frac{\rho\vv}{\Pi_0}\rg),\quad \na_\bx\cdot\vv\equiv 0,\quad \rho(t=0)=\rho_0.
\end{align}
Naively, we could change the target density to $\Pi(\bx)=e^{-\uu(\bx)}/\zz$. However, a direct replacement would break the mass-conservation structure in general. Motivated by the designs in \cite{LuLuNolen19,TanLu23}, we incorporate a ``mass correction'' term in the dynamics to ensure conservation of the total mass: 
\begin{align}\label{EQ:1st_Ord2}
\begin{cases}&\dss\pa_t \rho-\nu \na_\bx\cdot \lf( \rho \na_\bx\log\lf(\frac{\rho}{\Pi}\rg)\rg)= -\Pi\ \na_\bx\cdot\lf (\frac{\rho\vv}{\Pi}\rg)- \rho \int \rho  {\vv}\cdot \na_\by \log\Pi\; \dy,\\
&\dss\na_\bx\cdot\vv(t,\bx)=0,\quad\rho(t=0,\bx)=\rho_{0}(\bx),\quad (t,\bx)\in \rr_+\times \Torus^d. \end{cases}
\end{align}
Recalling the definition of drift defect  \eqref{drift_defect} and applying the divergence-free condition of $\vv$, we  observe that the modified transport term naturally leads to the drift defect:
\begin{align}
-\Pi\ \na_\bx\cdot\lf (\frac{\rho\vv}{\Pi}\rg)
&=-\na_\bx\cdot(\rho \vv)+{\rho}e^{\uu}\na_\bx\cdot(e^{-\uu}\vv)=-\na_\bx\cdot(\rho \vv)-{\rho}\D.
\end{align}
Hence, the equation \eqref{EQ:1st_Ord2}, when rewritten properly, recovers the $\D$-alignment $\mathcal Q_{\uu,\vv}[\rho]$, yielding the enhanced model \eqref{EQ:1st_Ord}. 

To conclude the discussion of the first-order model, we highlight that the equation \eqref{EQ:1st_Ord} is equivalent to the following system satisfied by the quotient $h\overset{\text{def}}{=}\rho/\Pi$: 
\begin{align}\begin{cases}
&\displaystyle\pa_t h+\vv\cdot\na_\bx h=\nu\de_\bx h-\nu\na_\bx{\uu}\cdot\na_\bx h-h\int h\vv\cdot \na_\bx \Pi \dx,\\
&\displaystyle\hspace{2.3cm}\na_\bx\cdot\vv(t,\bx)=0,\quad h(t=0,\bx)=\frac{\rho_{0}(\bx)}{\Pi(\bx)}.\end{cases}\label{EQ:1stord_h}
\end{align}

\noindent
{\bf b) Second-order model} 
A similar argument yields that the passive scalar dynamics \eqref{PS_bp} can be generalized to the following equation, which is equivalent to the equation \eqref{EQ:2nd_Ord}:
\begin{align}
\begin{cases}&\dss\pa_t f+\Pi\ \na_\bx \cdot\lf(\frac{\bv(\bp) f}{\Pi}\rg)=\nu\de_\bp f+ \kappa\na_\bx\cdot \lf(f\na\log\lf(\frac{f}{\Pi}\rg)\rg)- {f}\iint f\, \bv(\bp)\cdot \na_\bx \log\Pi\; \dx \textnormal{d}\bp\\
&\dss f(t=0,\bx,\bp)= f_{0} (\bx,\bp),   \quad  \bx\in \Torus^d,\, \bp\in \mathbb{T}^{d}. \end{cases}
\end{align} 
Now we can rewrite it with $h\overset{\text{def}}{=}f/\Pi$ as follows, 
\begin{align}
 \begin{cases}&\displaystyle\pa_t h+\bv(\bp)\cdot\na_\bx h=\nu\de_{\bp}
h+\kappa\de_\bx h-\kappa\na_\bx{\uu}\cdot\na_\bx h-{h}\iint h\, \bv(\bp)\cdot \na_\bx \Pi\; \dx \textnormal{d}\bp,\\
&\displaystyle\hspace{2.3cm}h(t=0,\bx,\bp)=\frac{f_{0}(\bx,\bp)}{\Pi(\bx)},   \quad  \bx\in \Torus^d,\, \bp\in \mathbb{T}^{d}.\end{cases}\label{EQ:2ndord_h}
\end{align}

In Sections \ref{sec:First_Order}--\ref{sec:Second_Order}, we develop enhanced dissipation estimates, which are in the spirit of \eqref{ED_V} and \eqref{ED_bp_intro} for the formulations presented in this section.

\subsection{The Mass-searching Dynamics}\label{sec:numerics}
As discussed in the previous sections, for an accurate simulation of the full enhanced dynamics \eqref{EQ:1st_Ord} and \eqref{EQ:2nd_Ord}, a mass-searching procedure is required to recover the exact solutions. We propose a dynamics to compute or approximate the mass of a given function $e^{-\ww}$. This dynamics can be applied to find the normalizing constant $Z$ in $\Pi(\bx)=e^{-\uu}/Z$ \eqref{Trgt} or the mass of the solutions $\wh\rho(t,\bx),\,[\wh f](t,\bx)\geq 0$ to the simplified dynamics \eqref{EQ:1st_Ord_sim}\myh{ and \eqref{EQ:2nd_Ord_sim}}.

\myh{\subsubsection{{\bf  First-order Model}}} 
To compute the mass of a given function $F=e^{-\ww}\in C_\bx^3$,  we propose the following first-order dynamics to identify $M:=\|F\|_{L_\bx^1}$:
\begin{align}\label{EQ:1st_Ord_mass}
&\begin{cases}&\displaystyle\vphantom{\int}\pa_t \omega+ \vv \cdot\na_\bx \omega   -\omega\;\vv\cdot \na_\bx {\ww} =\nu e^{\ww}\de_\bx(\omega e^{-\ww})  ,\\
&\na_\bx\cdot\vv(t,\bx)=0,\quad\omega(t=0,\bx)\equiv {|\Torus|^d},\quad (t,\bx)\in \rr_+\times \Torus^d.   \end{cases}
\end{align}
Here, the vector field $\vv$ is defined in $\eqref{u_ell}_{\mathcal{U}=\sin}$ and the parameter $\nu$ is chosen as in Theorem \ref{thm:lnrED_AltS}. The diffusion operator is reformulated as follows:\begin{align}
\nu e^{\ww}\de_\bx(\omega e^{-\ww}) =  \nu\de_\bx\omega-2\nu \na_\bx \omega\cdot  \na_\bx{\ww}+ \nu\omega(-\de_\bx \ww+|\na_\bx \ww|^2),\quad \na_\bx \ww=-F^{-1}{\na_\bx F} .
\end{align} 
Hence, the gradient $\na\ww$ is singular if $F$ is close to zero. To avoid this, we assume that 
\begin{align}\label{F_low_bnd}
\min_{\bx} F(\bx)\geq 1. 
\end{align} For functions $\widetilde {F}$ that fail the condition \eqref{F_low_bnd}, we can define the shifted function $F:=\widetilde {F}+1$, apply the dynamics \eqref{EQ:1st_Ord_mass} to determine $\|F\|_{L^1}$, and then subtract the mass introduced by the shift to obtain the true mass, i.e., $\|\widetilde{F} \|_{L^1_\bx}=\|F\|_{L^1_\bx}-|\Torus|^d$. 

We emphasize that no quadrature is needed to simulate the dynamics \eqref{EQ:1st_Ord_mass}, and the solutions converge to the mass $\|e^{-\ww}\|_{L^1_\bx}$ at an enhanced rate. This is the content of the next theorem. 

\begin{theorem}\label{thm:mass_1}
Consider solutions to \eqref{EQ:1st_Ord_mass} subject to the vector field $\vv$ in $\eqref{u_ell}_{\mathcal{U}=\sin}$. Assume that the parameter $\nu$ is chosen as in Theorem \ref{thm:lnrED_AltS} and the target function $F$ satisfies the condition \eqref{F_low_bnd}. Then the following estimate holds
\begin{align}
\|\omega(t) e^{-\ww}-M\|_{L^2}\leq&C \||\Torus|^de^{-\ww}- M\|_{L^2}e^{-\delta_{\rm ED}\nu^{1/2}t},\quad \forall t\ge0.\label{mass_converge}
\end{align}
Here, the mass $M:=\|F\|_{L_\bx^1}$. 
\end{theorem}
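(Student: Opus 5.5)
Set $\eta:=\omega e^{-\ww}$ and rewrite \eqref{EQ:1st_Ord_mass} as a passive scalar equation for $\eta$, so that Theorem \ref{thm:lnrED_AltS} applies directly.

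\emph{Step 1: the equation for $\eta$.} Since $\na_\bx e^{-\ww}=-e^{-\ww}\na_\bx\ww$, we have
\[
e^{-\ww}\bigl(\vv\cdot\na_\bx\omega-\omega\,\vv\cdot\na_\bx\ww\bigr)=\vv\cdot\na_\bx(\omega e^{-\ww})=\vv\cdot\na_\bx\eta .
\]
Likewise, multiplying the right-hand side by $e^{-\ww}$ gives $\nu\de_\bx(\omega e^{-\ww})=\nu\de_\bx\eta$. Because $\ww$ is time-independent, multiplying \eqref{EQ:1st_Ord_mass} by $e^{-\ww}$ yields exactly
\[
\pa_t\eta+\vv\cdot\na_\bx\eta=\nu\de_\bx\eta,\qquad \eta(0)=|\Torus|^d e^{-\ww}=|\Torus|^dF .
\]
This is \eqref{eq:PS}. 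Condition \eqref{F_low_bnd} together with $F\in C^3$ guarantees that $\ww=-\log F$ is smooth and bounded, so this manipulation is legitimate for classical solutions. The same condition gives well-posedness of \eqref{EQ:1st_Ord_mass}, since the two formulations are equivalent via multiplication by the smooth positive factor $e^{\mp\ww}$.

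\emph{Step 2: identify the mean.} Since $\na_\bx\cdot\vv=0$, integrating the $\eta$-equation over $\Torus^d$ shows that $\int\eta\,\dx$ is conserved. Hence
\[
\overline{\eta}(t)=\overline{\eta}(0)=\frac{1}{|\Torus|^d}\int|\Torus|^dF\,\dx=\|F\|_{L^1}=M,
\]
using $F>0$. This explains the choice of initial datum $\omega(0)\equiv|\Torus|^d$: it makes the conserved average equal to the sought mass.

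\emph{Step 3: apply enhanced dissipation.} With $s=0$, Theorem \ref{thm:lnrED_AltS} gives
\[
\|\eta(t)-M\|_{L^2}\le C_{\rm ED}\,\||\Torus|^de^{-\ww}-M\|_{L^2}\,e^{-\delta_{\rm ED}\nu^{1/2}t},
\]
which is \eqref{mass_converge} with $C=C_{\rm ED}$.

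The argument has no genuine obstacle. The only point requiring care is the algebraic conjugation in Step 1, which must produce exactly the divergence-free transport-diffusion operator so that no lower-order terms remain.
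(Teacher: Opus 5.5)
Your proposal is correct and follows the paper's proof: conjugating by $e^{-\ww}$ turns \eqref{EQ:1st_Ord_mass} into the passive scalar equation for $\omega e^{-\ww}$. Its conserved spatial average equals $M$ because of the initial datum $\omega(0)\equiv|\Torus|^d$, and Theorem \ref{thm:lnrED_AltS} with $s=0$ then gives \eqref{mass_converge} with $C=C_{\rm ED}$.
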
 
\myh{
\subsubsection{{\bf Second-order Model}} 
Consider the target function $F=e^{-\ww}\in C_\bx^3$ satisfying the condition \eqref{F_low_bnd}. We introduce a second-order PDE model that recovers $M$ as $t\rightarrow \infty$:
\begin{align}\label{EQ:2nd_Ord_mass}
\begin{cases}&\pa_t f+\underbrace{e^\ww \na_\bx\cdot(\bv f e^{-\ww})}_{=\bv\cdot \na_\bx f-\bv\cdot\na_\bx \ww f}=\nu\de_\bp f,\\
&\quad
f(t=0,\bx,\bp)=|\Torus|^d,\quad (t,\bx,\bp)\in \rr_+\times\Torus^d\times\Torus^d. 
\end{cases} %
\end{align}
The drift velocity has the form $\bv(\bp)=(\sin(p_1),\sin(p_2),\ldots, \sin(p_d))\in [-1,1]^{d}$. 
As a simple consequence of the enhanced dissipation \eqref{ED_bp_intro}, we have the following convergence result.

\begin{theorem}\label{thm:mass_2}
Consider solutions to \eqref{EQ:2nd_Ord_mass}. Assume that the parameter $\nu\in(0,1]$ and the target function $F$ satisfies the condition \eqref{F_low_bnd}. There exists a universal constant $\delta\in(0,1)$ such that the following estimate holds
\begin{align}
\|f(t) e^{-\ww}-M\|_{L_{\bx,\bp}^2}\leq&C \||\Torus|^de^{-\ww}- M\|_{L_{\bx,\bp}^2}e^{-\delta\nu^{1/2}t},\quad \forall t\ge0.\label{mass_converge_2}
\end{align}
Here, the mass $M:=\|F\|_{L_\bx^1}$. 
\end{theorem}
}

The remainder of the paper is organized as follows: In Section \ref{sec:First_Order}, we present the analysis of the first-order model \eqref{EQ:1st_Ord} and the proof of Theorem \ref{thm_1st_order}; in Section \ref{sec:Second_Order}, we present the proof of Theorem \ref{thm_2nd_order}, which characterizes the behavior of the second-order model \eqref{EQ:2nd_Ord}; in Section \ref{sec:Numerical_Result}, we present a numerical experiment to illustrate the fast convergence of our dynamics (see, e.g., Figure \ref{fig_error}). In the appendix, we present several technical theorems needed in the main text and the particle dynamics associated with the systems \eqref{EQ:1st_Ord} and \eqref{EQ:2nd_Ord}.

\noindent
{\bf Notation: }Throughout the paper, the symbol $C$ denotes a generic constant that may vary from line to line. The notation $T_j$ ($j=1, 2, \dots$) refers to intermediate terms appearing within a proof of a theorem or lemma. The $T_j$'s are local to the specific argument and can be redefined when we start the proof of a new theorem or lemma.

\section{The First-Order Enhanced Dynamics}
\label{sec:First_Order}

\subsection{Preliminaries}

In this section, we present some preliminary facts about the system \eqref{EQ:1stord_h}. Since $\rho_{0}$ and $\Pi$ are regular, a standard argument yields that the equation \eqref{EQ:1stord_h} is locally well-posed. Hence, we only derive a priori estimates in the forthcoming text.

An important aspect of the proposed model is that it preserves the total mass of the density $\rho$. To see this, we compute the time derivative of the mass. Thanks to the maximum principle, the solution $\rho$ stays positive as long as it stays regular. Hence, the mass $\|\rho(t)\|_{L^1}$ of the solution is the integral $\dss\int \rho(t,\bx)\dx$. We recall \eqref{EQ:1st_Ord2} and compute the time evolution of the mass as follows:
\begin{align}
\frac{d}{dt}\int \rho \dx=&\int_{\Torus^d}\na_\bx\cdot\lf(-\vv  \rho+\nu\na_\bx \rho+\nu \rho \na_\bx{\uu}\rg) \dx+\int \rho\lf(-\vv\cdot \na_\bx {\uu} +\int \rho\, \vv\cdot \na_\bx {\uu} \dx\rg)\dx.
\end{align} 
Thanks to the divergence theorem, the first term is 0. For the second term, we rewrite it as follows:
\begin{align}
\frac{d}{dt}\int \rho \dx=\lf(\int \rho \vv\cdot \na_\bx{\uu} \dx\rg) \lf(-1+\int \rho \dx\rg).
 \end{align}
Since the first factor on the right-hand side is continuous in time, $\int \rho(t,\bx) \dx\equiv 1$ is the unique solution to the above ODE with the prescribed initial mass. As a result, we conclude that the total mass is preserved by the dynamics: 
$
\|\rho(t,\cdot)\|_{L^1_\bx(\Torus^d)}\equiv \|\rho_{0}\|_{L^1_\bx(\Torus^d)}=1,\, \forall t\in\rr_+. 
$

\subsection{Proof of Theorem \ref{thm_1st_order}} In this section, we consider the first-order model \eqref{EQ:1st_Ord} and derive the result \eqref{eq:relative-estimate}. The proof is organized in steps. 

\vspace{5mm}
\step{1: Key estimate. } 
To distill the key estimate to establish, we observe that the following relations for all $t\geq 0$ imply estimate \eqref{eq:relative-estimate}:
\begin{align}\label{Est1st_nmd}
\lf\|\wt{\lf(\frac{\rho}{\Pi}\rg)}(t)\rg\|_{L_{\bx}^2}\leq& C_1\lf\|\wt{\lf(\frac{\rho_{0}}{\Pi}\rg)}\rg\|_{L_\bx^2} e^{-\delta\nu^{1/2}t},\quad\text{(Key Estimate)}\\ \label{Est1st_0md}
\lf|\overline{\lf(\frac{\rho}{\Pi}\rg)}(t)-1\rg|\leq& C_2e^{-\delta\nu^{1/2}t} .
\end{align}Here, $\dss\overline{F}:=\frac{1}{|\Torus|^d}\int_{\Torus^d } F \dx$ is the spatial average and $\wt F:=F-\overline{F}$ is the fluctuation. The constants $C_1$ (defined in \eqref{C_1}), $C_2$ (defined in \eqref{C2}) depend only on the initial density $\rho_{0}$, and the potential $\uu$.  We recall that the parameter $\delta$ \eqref{chc_del} can be expressed in terms of  $C_{\rm ED},\;\delta_{\rm ED}$ in \eqref{ED_V}. To see the implication of the result \eqref{eq:relative-estimate}, we apply the triangle inequality,
\begin{align*}
\lf\|\frac{\rho}{\Pi}(t)-1\rg\|_{L_{\bx}^2}&\leq \lf(\lf\|\wt{\lf(\frac{\rho}{\Pi}\rg)}(t)\rg\|_{L_\bx^2}+ |\Torus|^{d/2}\Big|\overline{\lf(\frac{\rho}{\Pi}\rg)}(t)-1\Big|\rg)\leq \lf(C_1\lf\|\wt{\lf(\frac{\rho_0}{\Pi}\rg)}\rg\|_{L_\bx^2}+|\Torus|^{d/2}C_2\rg)e^{-\delta\nu^{1/2}t}.
\end{align*}
Defining $C_\dagger:=C_1\lf\|\wt{\lf(\frac{\rho_0}{\Pi}\rg)}\rg\|_{L_\bx^2}+|\Torus|^{d/2}C_2$ yields the result. 
 
Next, we observe that the estimate \eqref{Est1st_0md} is a simple consequence of \eqref{Est1st_nmd}. Thanks to the mass conservation, the following equality holds 
\begin{align}
&0=\int \rho \dx-\int\Pi \dx=\int \lf(\frac{\rho}{\Pi}-1\rg)\Pi \dx=\int \lf[\wt {\left(\frac{\rho}{\Pi}\rg)} +\overline{\lf(\frac{\rho}{\Pi}\rg)}-1\right]\Pi \dx
\;\Longrightarrow\;\overline{\lf(\frac{\rho}{\Pi}\rg)}-1=-\int  \wt {\lf(\frac{\rho}{\Pi}\rg)} \Pi \dx.
\end{align}
Hence, 
\begin{align}\begin{split}&\lf|\overline{\lf(\frac{\rho}{\Pi}\rg)}(t)-1\rg|=\lf|\int\wt{\lf(\frac{\rho}{\Pi}\rg)}\Pi \dx\rg|\leq \|\Pi\|_{L^2}\lf\|\wt{\lf(\frac{\rho}{\Pi}\rg)}(t)\rg\|_{L^2}\\
&\leq C_1\|\Pi\|_{L^2}\lf\|\wt{\lf(\frac{\rho_{0}}{\Pi}\rg)}\rg\|_{L^2}e^{-\delta\nu^{1/2}t}=C_1\|e^{-\uu}\|_{L^2}\lf\|\wt{\lf({\rho_{0}}e^\uu\rg)}\rg\|_{L^2}e^{-\delta\nu^{1/2}t}=: C_2(C_1,\rho_0,\uu)e^{-\delta\nu^{1/2}t}.\end{split}\label{C2} 
\end{align} 
This is \eqref{Est1st_0md}. Hence, we conclude that the \emph{key estimate} is the nonlinear enhanced dissipation \eqref{Est1st_nmd}, whose proof stands as the focus of the remainder of the section.

\vspace{5mm}
\step{2: Bootstrap.} To develop the key estimate \eqref{Est1st_nmd}, we recall the quantity $h=\rho\Pi^{-1}$ and the equation \eqref{EQ:1stord_h} that it satisfies,
\begin{align}\label{EQ:1stord_h_1}
\begin{split}\pa_t h+\vv\cdot\na_\bx h=&\nu\de_\bx h-\nu\na_\bx{\uu}\cdot\na_\bx h-Qh,
\hspace{.5cm}Q(t):=\int_{\Torus^d} h \vv\cdot\na_\bx \Pi \dx,\quad h\geq 0.
\end{split}
\end{align}
The challenge in analyzing this system stems from the reaction term $Qh$. Unlike other terms on the right-hand side of \eqref{EQ:1stord_h_1}, it does not have a small parameter $\nu$ in front. Therefore, to control this term, we need to estimate the time integral of the quantity $Q$. This observation motivates our bootstrap setup. To prove the key estimate \eqref{Est1st_nmd}, we apply a bootstrap argument. Assume that $[0,T_\ast)$ is the maximal interval on $[0,\infty)$ such that the following two \emph{hypotheses} are satisfied:\begin{subequations}

\noindent a) Control on $Q(t)$: for all $ t\in[0,T_\ast)$ there holds
\begin{align} \label{Hypothesis_a}
\lf|\int_0^t Q(s)ds\rg|\leq 2\mathfrak{B}_1(\rho_0,\uu):=2\lf(1+ \lf|\log\lf(\frac{2  \|e^{-\uu}\|_{L_\bx^\infty}^{-1}}{3\|\rho_0 e^\uu\|_{L^1_{\bx}}}\rg)\rg|+\lf|\log\lf(\frac{  2\|e^{\uu}\|_{L_\bx^\infty}}{\|\rho_0 e^\uu\|_{L^1_{\bx}}}\right)\rg|\rg).
\end{align}

\noindent
b) Enhanced Dissipation
\begin{align}\label{Hypothesis_b}
\|\wt h(t)\|_{L^2}\leq 2\mathfrak{B}_2(\mf B_1)\|\wt h_{0}\|_{L^2}\exp\lf\{-{\delta}\nu^{1/2}t\rg\},\quad\mf B_2= e^{2\mf B_1+2}. 
\end{align}
Here, the parameter $\delta$ is defined in \eqref{chc_del}. \end{subequations}

To establish the estimate \eqref{Est1st_nmd}, it is sufficient to prove the following stronger \emph{conclusion}: If the threshold $0<\nu_0(\rho_0,\uu,\delta^{-1})$ is small enough, then for all $0<\nu\leq \nu_0$, the following estimates hold\begin{subequations}

\noindent  a) Control on $Q(t)$
 \begin{align}\label{Con_a}
\lf|\int_0^t Q(s)ds\rg|\leq \mathfrak{B}_1(\rho_0,\uu), \quad t\in [0,T_\ast);
\end{align}

\noindent
b) Enhanced Dissipation
\begin{align}\label{Con_b}
\|\wt h(t)\|_{L^2}\leq \mathfrak{B}_2(\rho_0,\uu)\|\wt h_{0}\|_{L^2}\exp\lf\{-{\delta}\nu^{1/2}t\rg\}, \quad t\in [0,T_\ast). 
\end{align}
\end{subequations}
\begin{remark}[The choice of $\nu_0$]
The $\nu_0$ satisfies all constraints  
 \eqref{chc_nu_0_1st}, \eqref{chc_nu01st2} and \eqref{chc_nu01st3}.
\end{remark}
Once the estimates \eqref{Con_a} and \eqref{Con_b} are established, we can see that $T_\ast=\infty$ and the following estimate holds
\begin{align}
&\lf\|\wt{\lf(\frac{\rho}{\Pi}\rg)}(t)\rg\|_{L_\bx^2}=\|\wt{h}(t)\|_{L_\bx^2}\leq e^{2\mf B_1+2}\|\wt{h_0}\|_{L_\bx^2}e^{-\delta\nu^{1/2}t}, \quad \forall t\geq 0.
\end{align}
This is \eqref{Est1st_nmd} with\begin{align}
&C_1(\rho_0,\uu):=\exp\lf\{ 4+ 2\lf|\log\lf(\frac{2  \|e^{-\uu}\|_{L_\bx^\infty}^{-1}}{3\|\rho_0 e^\uu\|_{L^1_{\bx}}}\rg)\rg|+2\lf|\log\lf(\frac{  2\|e^{\uu}\|_{L_\bx^\infty}}{\|\rho_0 e^\uu\|_{L^1_{\bx}}}\right)\rg|\rg\} .\label{C_1}
\end{align}  This will conclude the proof of Theorem \ref{thm_1st_order}. 

\step{3: Proof of conclusion \eqref{Con_a}.} 
To capture the quantity $Q$, we observe that it varies in time and is constant in space. Hence, it mainly causes fluctuations in the total mass of the solution $h$. Therefore, one can apply an a priori estimate of the $L^1$-norm of $h$ to recover $Q$. We compute, 
$\displaystyle \frac{d}{dt}\| h\|_{L_{\bx}^1}=\nu\int_{\Torus^d} \wt h\de_\bx {\uu} \dx-\|h\|_{L_{\bx}^1} Q$, which implies
\begin{equation}\label{h_L1_T12}
\begin{split}
\| h(t)\|_{L_{\bx}^1}=&\|h_0\|_{L_{\bx}^1} \exp\lf\{-\int_0^t Q(s)ds\rg\}+\nu\int_0^t \exp\lf\{-\int_s^t Q(\tau)d\tau\rg\}\int_{\Torus^d}\wt h(s,\bx)\de_\bx {\uu}(\bx)\dx ds\\
=:&T_1+T_2,\quad \forall t\in[0,T_\ast). 
\end{split}
\end{equation}
We first observe that thanks to the bootstrap assumptions \eqref{Hypothesis_a} and \eqref{Hypothesis_b}, the second term $T_2$ can be controlled as follows:
\begin{align}
|T_2|\leq&\nu\int_0^t \exp\{4\mf B_1 \} \|\wt h(s)\|_{L_\bx^2}\|\de_\bx{\uu}\|_{L_\bx^2}ds 
\leq  \nu\int_0^t 2\mf B_2  \exp\{4\mf B_1 \} \|\wt h(0)\|_{L_\bx^2}e^{-\delta\nu^{1/2}s}\|\de_\bx\uu\|_{L_\bx^2}ds\\
\leq &\nu^{1/2}\frac{1}{\delta}2\mf B_2  \exp\{4\mf B_1 \}  {\zz}\lf\|\wt{\lf(\rho_0e^{\uu}\rg)}\rg\|_{L_\bx^2}\|\de_\bx \uu\|_{L_\bx^2}.
\end{align}
Here, $\zz$ is the normalizing factor. 
Hence, for any 
\begin{align}\label{chc_nu_0_1st}
0<\nu\leq\nu_0(\rho_0,{\uu},\delta^{-1})\leq\Bigl( \frac{\delta\|\rho_0e^{\uu}\|_{L^1_\bx}}{4(1+\lf\|\wt{(\rho_0e^{\uu})}\rg\|_{L_\bx^2})(1+\|\de_\bx \uu\|_{L_\bx^2})\mf B_2 \exp\{6\mf B_1\}}\Bigr)^2,
\end{align}
 the following relation holds
\begin{align}
|T_2|\leq \frac{\zz}{2}\|\rho_0e^{\uu}\|_{L_\bx^1}\exp\{-2\mf B_1\}=\frac{1}{2}\|h_0\|_{L_\bx^1}\exp\{-2\mf B_1\}\leq \frac{1}{2}\|h_0\|_{L_\bx^1}\exp\lf\{-\int_0^t Q(s)ds\rg\}. 
\end{align} 
As a consequence of this estimate, we obtain the following 
\begin{align}
\|h(t)\|_{L_\bx^1}\in \lf[\frac{1}{2}\|h_0\|_{L_\bx^1}e^{-\int_0^t Q(s)ds},\frac{3}{2}\|h_0\|_{L_\bx^1}e^{-\int_0^t Q(s)ds}\rg].\label{h_L1_est}
\end{align}
On the other hand, we also have the following observation
\begin{align}
\|h\|_{L_{\bx}^1}=&\|\rho/\Pi\|_{L_{\bx}^1}\leq \|\rho\|_{L_{\bx}^1}\|\Pi^{-1}\|_{L_\bx^\infty} \ \ \text{and} \ \ 
\|\rho\|_{L_{\bx}^1}=\|h\Pi\|_{L_{\bx}^1}\leq \|h\|_{L_{\bx}^1}\|\Pi\|_{L_\bx^\infty}.
\end{align}
Hence,
$
\|\Pi\|_{L_\bx^\infty}^{-1}\leq \|h(t)\|_{L_{\bx}^1}\leq \|\Pi^{-1}\|_{L_\bx^\infty},\, \forall t\in[0,\infty). 
$ This further yields the bound
\begin{align}
\frac{2}{3}\|\Pi\|_{L_\bx^\infty}^{-1}\leq \|h_0\|_{L_\bx^1}\exp\lf\{-\int_0^t Q(s)ds\rg\}\leq  2\|\Pi^{-1}\|_{L_\bx^\infty}.
\end{align}
Rewriting this bound gives 
\begin{align}
  -\int_0^t Q(s)ds \in\lf[\log\lf(\frac{2  \|\Pi\|_{L_\bx^\infty}^{-1}}{3\|\rho_0/\Pi\|_{L^1_{\bx}}}\rg),\log\lf(\frac{  2\|\Pi^{-1}\|_{L_\bx^\infty}}{\|\rho_0/\Pi\|_{L^1_{\bx}}}\rg)\rg] .
\end{align}
Hence, we obtain the following estimate which concludes the proof of \eqref{Con_a},
\begin{align}
\lf|\int_0^t Q(s)ds\right|\leq \lf|\log\lf(\frac{2  \|e^{-\uu}\|_{L_\bx^\infty}^{-1}}{3\|\rho_0 e^\uu\|_{L^1_{\bx}}}\rg)\rg|+\lf|\log\lf(\frac{  2\|e^{\uu}\|_{L_\bx^\infty}}{\|\rho_0 e^\uu\|_{L^1_{\bx}}}\right)\rg|\le\mf B_1({\uu},\rho_0),\quad\forall t\in[0,T_\ast).
\end{align}

\step{4:} The proof of conclusion \eqref{Con_b} proceeds in two steps. 

\step{4a: Reformulation. }
We define the augmented quantity
$\displaystyle H:=h \exp\Big\{\int_0^t Q(s)ds\Big\}$. 
Thanks to hypothesis \eqref{Hypothesis_a}, $H$ and $h$ are comparable:
\begin{align}
H e^{-2\mf B_1}\leq H e^{-\int_0^t Q(s)ds}= h\leq He^{2\mf B_1}.\label{Comparable}
\end{align}
The equation \eqref{EQ:1stord_h_1} can be rephrased as follows
\begin{align}\label{EQ:1stord_h2}
\pa_t H+\vv\cdot\na_\bx H=\nu\de_\bx H-\nu\na_\bx{\uu}\cdot\na_\bx H,\quad H\geq 0.
\end{align} 
We can further decompose the solution,
\begin{align}\label{H_avg}\pa_t \overline{H}=&\frac{\nu}{|\Torus|^{d}}\int \wt{H} \de_\bx\uu \dx,\quad {\overline H(t=0)=\overline{\lf(\frac{\rho_0}{\Pi}\rg)}};\\
\label{H_nq}
\pa_t \wt H+  \vv\cdot\na_\bx \wt H=&\nu\de \wt H-\nu\wt {\mathbb{P}}_\bx{\lf (\na_\bx\wt \uu\cdot \na_\bx \wt H\rg)},\quad \wt H(t=0)=\wt{\lf(\frac{\rho_0}{\Pi}\rg)}.
\end{align}
Here, we recall the definition of $\wt{\mathbb{P}}_\bx$ \eqref{P_avg_tild}.

\step{4b: Enhanced Dissipation Estimate.}
We recall the linear enhanced dissipation \eqref{ED_V} and the definition of $\delta$ \eqref{chc_del} to obtain that 
\begin{align} \delta=\frac{\delta_{\rm ED}}{2+\log C_{\rm ED}}<\delta_{\rm ED}  \;\leadsto\;
C_{\rm ED}\exp\{-\delta_{\rm ED}\delta^{-1}\}= e^{-2}    .
\end{align}
Now  we consider a decomposition of the time horizon $[0,\infty)$ as 
\begin{align}
\rr_+=\bigcup_{i=0}^\infty [T_i,T_{i+1}),\quad T_i:=i\delta^{-1}\nu^{-1/2};\quad \bigcup_{i=0}^{i_\ast}[T_i,T_{i+1})\subset[0,T_\ast)\subset \bigcup_{i=0}^{i_\ast+1}[T_i,T_{i+1}).\label{Time_disc}
\end{align} Let $\wt\eta$ solve the reference equation
\begin{align}
\begin{cases}
&\pa_t \wt \eta^{(i)} +\vv\cdot\na_\bx \wt\eta^{(i)}=\nu\de_\bx \wt\eta^{(i)},\\
&\wt\eta^{(i)}(T_i,\bx)=\wt H(T_i,\bx),\quad \forall t\in[T_i,T_{i+1}].
\end{cases}
\end{align} 
\myh{\begin{figure}[H]\includegraphics[scale=1]{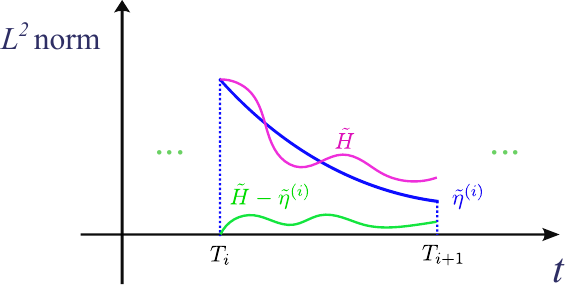} 
\caption{Relation between $\wt\eta^{(i)}$ and $\wt H$}
\end{figure}}
We compute the time evolution of the squared $L^2$-norm of the solution $\wt H$. Since $\int \overline H \wt g \dx\equiv 0$ for any function $g\in L^1$, we have \begin{align} 
\frac{1}{2}\frac{d}{dt}\|\wt H\|_{L^2}^2
=&-\nu \int| \na_\bx \wt H|^2 \dx+\int \na_\bx\cdot\vv\lf(\frac{\wt H^2}{2}\rg)\dx+ \nu\int  \de_\bx \wt\uu \lf(\frac{\wt H^2}{2}\rg)  \dx\\
=&-\nu\|\na_\bx \wt H\|_{L_\bx^2}^2+\frac{\nu}{2}\int (\de_\bx \uu) \wt H ^2\dx
=:-\mathfrak{D}+\mathfrak{S}. \label{DS1st}
\end{align}
Here, we have the diffusion contribution $\mf D$ and the source term $\mf S$. Note that the term $\mf S$ can be bounded as follows
\begin{align}
\mf S\leq \frac{\nu}{2}\|(\de_\bx \uu)_+\|_{L^\infty_\bx}\|\wt H\|_{L^2_\bx}^2.
\end{align}
Hence, the squared $L^2$-norm of $\wt H$ satisfies the differential inequality,
\begin{align}
&\frac{d}{dt}\|\wt H\|_{L_\bx^2}^2\leq \nu \|(\de_\bx \uu)_+\|_{L^\infty_\bx}\|\wt H\|_{L_\bx^2}^2\;\leadsto \; \\
&\quad\|\wt H(T_i+\tau)\|_{L_\bx^2}^2\leq \|\wt H(T_i)\|_{L_\bx^2}^2\exp\{\nu \|(\de_\bx \uu)_+\|_{L^\infty_\bx}\tau\},\quad \forall \tau \in [0,\delta^{-1}\nu^{-1/2}].
\end{align}
If we choose $\nu_0$ such that
\begin{align}
\nu_0^{1/2}\delta^{-1}\|\de_\bx \uu\|_{L^\infty_\bx}\leq 2,\label{chc_nu01st2}
\end{align}
then for any $\nu\in(0,\nu_0]$, we obtain that  
\begin{align}
\|\wt H(T_i+\tau)\|_{L_\bx^2}\leq e\|\wt H(T_i)\|_{L_\bx^2},\quad \forall \tau \in [0,\delta^{-1}\nu^{-1/2}]. 
\label{Reg_est_1st}
\end{align}
Next, we derive the deviation estimate. Consider the time evolution of the difference $\wt H-\wt\eta^{(i)}$. Standard $L^2$-energy estimate yields that
\begin{align} 
\label{Diff_1st} \begin{split}
\frac{1}{2}&\frac{d}{dt}\|\wt H-\wt \eta^{(i)}\|_{L^2}^2\\
&=-\nu \|\na (\wt H-\wt \eta^{(i)})\|_{L^2}^2+  \nu\int \na_\bx ( \wt H -\wt \eta^{(i)} ) \cdot \na_\bx\wt\uu \wt H  \dx+ \nu\int  ( \wt H -\wt \eta^{(i)} ) \de_\bx\wt\uu\wt H  \dx\\
&=:-\mf D_\ast+\mf S_{\ast;1}+\mf S_{\ast;2}. \end{split}
\end{align} 
To estimate the $\mf S_{\ast;1}$-term in \eqref{Diff_1st}, we apply the H\"older inequality, Young's inequality and the estimate \eqref{Reg_est_1st} to obtain the following 
\begin{align}
\mf S_{\ast;1}&\leq \nu\|\na_\bx (\wt H-\wt \eta^{(i)})\|_{L^2_\bx}\|\na_\bx \uu\|_{L^\infty_\bx}\|\wt H\|_{L_\bx^2}
\leq\frac{1}{4}\mf D_\ast+e^2\nu\|\na_\bx \uu\|_{L^\infty_\bx}^2\|\wt H(T_i)\|_{L_\bx^2}^2.
\end{align}
Similarly, we apply the Poincar\'e inequality $\|\wt H -\wt\eta^{(i)}\|_{L^2_\bx}\leq \|\na_\bx(\wt H -\wt\eta^{(i)})\|_{L^2_\bx}$ and a similar argument to obtain that 
\begin{align}
\mf S_{\ast;2}&\leq \nu\| \wt H-\wt \eta^{(i)}\|_{L^2_\bx}\|\de_\bx \uu\|_{L^\infty_\bx}\|\wt H\|_{L_\bx^2}
\leq\frac{1}{4}\mf D_\ast+e^2\nu\|\de_\bx \uu\|_{L^\infty_\bx}^2\|\wt H(T_i)\|_{L_\bx^2}^2.
\end{align}
Combining the estimates above yields that
\begin{align}
\frac{1}{2}\frac{d}{dt}\|\wt H-\wt \eta^{(i)}\|_{L^2}^2\leq -\frac{1}{2}\mf D_\ast+e^2\nu \lf(\|\na_\bx \uu\|_{L^\infty_\bx}^2+\|\de_\bx \uu\|_{L^\infty_\bx}^2\rg) \|\wt H(T_i) \|_{L^2}^2.
\end{align}
Integrating in time yields 
\begin{align}\|\wt H-\wt \eta^{(i)}\|_{L^2}(T_i+\tau)\leq& \frac{\sqrt{2}\,e\nu^{1/4}}{\delta^{1/2}} \lf(\|\na_\bx \uu\|_{L^\infty_\bx}^2+\|\de_\bx \uu\|_{L^\infty_\bx}^2\rg)^{1/2} \|\wt H(T_i) \|_{L^2}. 
\end{align}
As long as $\nu_0$ is chosen small enough, i.e.,
\begin{align}
0<\nu\leq\nu_0(\rho_0,\uu,\delta^{-1})\leq \lf(\frac{\delta}{8e^4(1+\|\na_\bx \uu\|_{L^\infty_\bx}^2+\|\de_\bx \uu\|_{L^\infty_\bx}^2 )}\rg)^2, \label{chc_nu01st3}
\end{align}
we have that
$
\|\wt H-\wt \eta^{(i)}\|_{L^2}(T_i+\tau)\leq  \frac{1}{2e}\|\wt H(T_i)\|_{L^2},\, \forall \tau\in[0,\delta^{-1}\nu^{-1/2}].$ 
As a consequence of the above discussion, if $\nu\leq\nu_0$ is chosen small such that all conditions 
\eqref{chc_nu_0_1st}, \eqref{chc_nu01st2} and \eqref{chc_nu01st3} are fulfilled, 
\begin{align} 
\|\wt H(T_i+\tau)\|_{L^2}^2\leq& {e^2}\|\wt H(T_i)\|_{L^2}^2,\quad
\|\wt H-\wt \eta^{(i)}\|_{L^2}(T_i+\tau)\leq \frac{1}{2e}\|\wt H(T_i)\|_{L^2},
\quad\forall \tau\in[0,\delta^{-1}\nu^{-1/2}].
\end{align}
Thanks to the choice of $\delta$ \eqref{chc_del}, we have that 
$
\|\wt \eta^{(i)}(T_i+\delta^{-1}\nu^{-1/2})\|_{L^2}\leq \frac{1}{2e}\|\wt H(T_i)\|_{L^2}.
$ 
Combining the estimates above, we have that 
\begin{align}
\|\wt H(T_{i+1})\|_{L^2}\leq 
\|\wt H(T_{i+1})-\wt \eta^{(i)}(T_{i+1})\|_{L^2}+\|\wt \eta^{(i)}(T_{i+1})\|_{L^2}\leq \frac{1}{e}\|\wt H(T_i)\|_{L^2}.\label{disc_ED_1st}
\end{align}
The remaining argument for deriving the enhanced dissipation estimate \eqref{Con_b} is standard. If $t\in \delta^{-1}\nu^{-1/2}\mathbb{N}$, then we have that by \eqref{disc_ED_1st},
\begin{align}
\|\wt H(t)\|_{L^2}\leq \|\wt H(0)\|_{L^2}\exp\lf\{- \frac{ t}{\delta^{-1}\nu^{-1/2}}\rg\}=\|\wt h_{0}\|_{L^2}\exp\lf\{- {\delta \nu^{ 1/2}}{ t}\rg\}.
\end{align} On the other hand, if $t\notin\delta^{-1}\nu^{-1/2}\mathbb{N}$, we choose the largest integer $N$ such that $\delta^{-1}\nu^{-1/2}N\leq t.$ Hence, we have the relation $t\in\delta^{-1}\nu^{-1/2}[N,N+1]$. Then, we combine the estimates \eqref{Reg_est_1st} and \eqref{disc_ED_1st} to obtain that
\begin{align}
\|\wt H(t)\|_{L^2}\leq& e\|\wt H (T_N)\|_{L^2}\leq e^2\|\wt h_{0}\|_{L^2}e^{-(N+1)}\leq e^2\|\wt h_{0}\|_{L^2}e^{ -\delta \nu^{1/2}t },\quad \forall  t \in[0,T_\ast).\label{stndrdEDarg}
\end{align} 
We recall the relation \eqref{Comparable},
\begin{align}
\|\wt h(t)\|_{L^2}\leq&e^{2\mf B_1}\|\wt H(t)\|_{L^2}\leq  e^{2\mf B_1+2}\|\wt h_{0}\|_{L^2}e^{ -\delta \nu^{1/2}t },\quad \forall  t \in[0,T_\ast).
\end{align} 
This proves the nonlinear enhanced dissipation  \eqref{Con_b} and concludes the proof of  Theorem \ref{thm_1st_order}.
\section{The Second-Order Enhanced Dynamics}
\label{sec:Second_Order}
In this section, we consider the second-order model \eqref{EQ:2nd_Ord} and prove Theorem \ref{thm_2nd_order}. Part of the analysis is similar to that for the first-order model, and we provide a sketch of the argument in Section \ref{sec:2nd_Prelim}. In Section \ref{sec:Proof_Thm2nd}, we provide the proof of Theorem \ref{thm_2nd_order}. 

\subsection{Preliminaries}\label{sec:2nd_Prelim}

We recall the model \eqref{EQ:2nd_Ord},
\begin{align}\begin{cases}&\displaystyle\pa_t f+\bv\cdot{\na_\bx} f
=\nu\de_\bp f+\kappa\de_\bx f+ \kappa\na_\bx\cdot\lf(f\na_\bx{\uu}\rg) +f\lf( - \bv\cdot\na_\bx {\uu}+\iint f\, \bv\cdot \na_\bx {\uu}\; \dx \textnormal{d}\bp \rg)\\
&\displaystyle\bv(\bp):=\sum_{j=1}^d\sin(p_j){\bf e}_{\bx,j},\quad \Pi=\zz^{-1}e^{-\uu},\quad f(t=0,\bx,\bp)= f_{0} (\bx,\bp),   \quad  \bx\in \Torus^d,\, \bp\in \mathbb{T}^d.\end{cases}\label{EQ:2nd_2d}
\end{align}Assume the following initial normalization:
\begin{align}\label{nrm_2nd}
\|f_0\|_{L_{\bx,\bp}^1}=1,\quad f_0\geq 0.
\end{align}
Standard arguments yield that the solution is locally well-posed, and we will focus on the a priori estimates in the forthcoming steps.  The model preserves mass. Thanks to the parabolic maximum principle, $f\geq 0$, and we can compute the time derivative of the mass:
\begin{align}\label{mss_prs}
&\ddt\|f\|_{L_{\bx,\bp}^1}=\iint f {\bv\cdot\na_\bx\log\Pi} \dx\textnormal{d}\bp-   \|f\|_{L_{\bx,\bp}^1}  \iint f \bv\cdot\na_\bx\log\Pi  \dx \textnormal{d}\bp,
\end{align}
Since $\|f_0\|_{L^1}=1$, the scalar ODE \eqref{mss_prs} implies \begin{align}\label{nrm_t_2nd}
\|f(t)\|_{L_{\bx,\bp}^1}\equiv 1=\|\Pi\|_{L^1_\bx}.
\end{align}
Now we consider the equation for 
$\displaystyle h(t,\bx,\bp):=\frac{f(t,\bx,\bp)}{\Pi(\bx)}$,
which solves the following equation:
\begin{align}
 \begin{cases}&\displaystyle\pa_t h+\bv\cdot\na_\bx h=\nu\de_{\bp}
h+\kappa\de_\bx h-\kappa\na_\bx{\uu}\cdot\na_\bx h-{h}\iint h\, \bv \cdot \na_\bx \Pi\; \dx \textnormal{d}\bp,\\
&\displaystyle\bv(\bp):=\sum_{j=1}^d\sin(p_j){\bf e}_{\bx,j},\quad \Pi=\zz^{-1}e^{-\uu}, \quad h(t=0,\bx,\bp)=\frac{f_{0}(\bx,\bp)}{\Pi(\bx)}.\end{cases}\label{EQ:2nd_2dh}
\end{align}
As in the first-order case, we define the renormalization factor
\begin{align}\label{G}
G(t):=\iint h(t,\bx,\bp)\, \bv(\bp)\cdot\na_\bx \Pi(\bx)\; \dx \textnormal{d}\bp, 
\end{align}
and define the renormalized solution
\begin{align}\label{Rnrm_sln}
H(t,\bx,\bp):= h(t,\bx,\bp)\exp\lf\{\int_0^t G(s)\ds\rg\},\quad \forall t\geq 0.
\end{align}
The equation satisfied by the renormalized solution is as follows:
\begin{align}
\begin{cases}&\displaystyle\pa_t H+\bv\cdot\na_\bx H=\nu\de_{\bp}
H+\kappa\de_\bx H-\kappa\na_\bx{\uu}\cdot\na_\bx H,\\
&\displaystyle H(t=0,\bx,\bp)=\frac{f_{0}(\bx,\bp)}{\Pi(\bx)}.\end{cases}\label{EQ:2nd_H}
\end{align}
To analyze this equation, we further decompose the solution $H$ into the average $\lan H\ran_\bx$ and the fluctuation $\wt H=H-\lan H \ran_\bx$:
\begin{align}\label{h_eq_avg}\pa_t \lan H\ran_\bx=&\nu\de_\bp \lan H\ran_\bx+\kappa\lan \wt H\de_\bx \uu \ran_\bx,\quad \lan H\ran_\bx \geq 0;\\
\label{h_eq_nq}
\pa_t \wt H=&\nu\de_\bp \wt H{+\kappa\de_\bx \wt H}-\kappa\wt{\mathbb{P}}_\bx(\na_\bx\uu\cdot \na_\bx \wt H)- \bv\cdot\na_\bx \wt H.
\end{align}\scalebox{0.0001}{\eqref{h_eq_avg}}
Here, we recall the definition of $\wt {\mathbb{P}}_\bx$ in \eqref{P_avg_tild}. 

Finally, we fix the parameter $\delta$ in Theorem \ref{thm_2nd_order} as follows
\begin{align}\label{chc_del_2nd}\delta = \frac{\delta_{0}}{5}\qquad \text{(Theorem \ref{thm_2nd_order})} .
\end{align}
 The constant $\delta_0$ is defined in Theorem \ref{thm:ED} and is independent of the diffusion coefficient $\nu$, the dimension $d$, and the solution. 
 
The proof of the following nonlinear enhanced dissipation estimates is analogous to that of \eqref{eq:relative-estimate}.
\begin{theorem}\label{thm:Nlnr_ED_2nd} Under the assumptions of Theorem \ref{thm_2nd_order}, the following claims hold: \begin{itemize}
\item There exists a threshold $\nu_0(f_0,\delta^{-1},\|\uu\|_{C_\bx^2})$ such that for all $0<\nu\leq \nu_0$, the nonlinear estimates hold:
\begin{align}
\label{G_est}
& \lf|\int_0^t   G(s)\ds\rg| \leq1+ \lf|\log\lf(\frac{2  \|e^{-\uu}\|_{L_\bx^\infty}^{-1}}{3\|f_0 e^\uu\|_{L^1_{\bx,\bp}}}\rg)\rg|+\lf|\log\lf(\frac{  2\|e^{\uu}\|_{L_\bx^\infty}}{\|f_0 e^\uu\|_{L^1_{\bx,\bp}}}\right)\rg|,\\ 
\label{ED_2nd} &\|\wt h(t)\|_{L_{\bx,\bp}^2}\leq  C_1^*\|\wt h_{0}\|_{L_{\bx,\bp}^2}e^{ -\delta \nu^{1/2}t },\\
&\hspace{0.4cm}C_1^*:=\exp\lf\{ 4+ 2\lf|\log\lf(\frac{2  \|e^{-\uu}\|_{L_\bx^\infty}^{-1}}{3\|f_0 e^\uu\|_{L^1_{\bx,\bp}}}\rg)\rg|+2\lf|\log\lf(\frac{  2\|e^{\uu}\|_{L_\bx^\infty}}{\|f_0 e^\uu\|_{L^1_{\bx,\bp}}}\right)\rg|\rg\},\quad \forall  t \in[0,\infty) .
\end{align}
\item Consider the passive scalar  solution $\wt\zeta$, 
\begin{align} \label{pssv}
&\pa_t \wt\zeta +\bv\cdot\na_\bx \wt\zeta=\nu\de_\bp \wt\zeta+\kappa\de_\bx \wt\zeta,\quad \wt \zeta(0,\bx,\bp)=\wt H(0,\bx,\bp).
\end{align} 
Then the following estimate holds for all $\nu\leq 1$,
\begin{align}
\|\wt H-\wt\zeta\|_{L_{\bx,\bp}^2}(t)\leq C\left(\delta^{-1}, \|\uu\|_{C^2_\bx}\right)\nu^{\frac14}\|\wt H(0)\|_{L_{\bx,\bp}^2},\quad\forall t\in[0,\delta^{-1}\nu^{-1/2}]. \label{pssv_app}
\end{align}
\end{itemize}
\end{theorem}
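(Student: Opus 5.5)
We follow the bootstrap scheme of the proof of Theorem \ref{thm_1st_order}, with $\Torus^d_\bx$ replaced by $\Torus^d_\bx\times\Torus^d_\bp$. The linear input \eqref{ED_V} is replaced by the kinetic enhanced dissipation of Theorem \ref{thm:ED} (equivalently \eqref{ED_bp_intro}) for the passive scalar \eqref{pssv}, with rate $\delta_0\nu^{1/2}$.

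\textbf{Step 1: the deviation estimate \eqref{pssv_app}.} We prove this first, since it is linear and needs no bootstrap. It consists of two energy estimates.

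First, testing \eqref{h_eq_nq} with $\wt H$, the transport term $\bv\cdot\na_\bx\wt H$ drops out because $\bv$ is independent of $\bx$. The drift term $-\kappa\,\wt{\mathbb P}_\bx(\na_\bx\uu\cdot\na_\bx\wt H)$ integrates by parts to $\frac{\kappa}{2}\iint \de_\bx\uu\,\wt H^2$. Hence
\[
\|\wt H(t)\|_{L^2}\le e^{\kappa\|\de_\bx\uu\|_{L^\infty}t/2}\,\|\wt H(0)\|_{L^2}.
\]
On $[0,\delta^{-1}\nu^{-1/2}]$, since $\kappa\le\nu$, this is at most $e\|\wt H(0)\|_{L^2}$ once $\nu\le 1$ is small relative to $\delta^{-1}\|\uu\|_{C^2}$ (otherwise the factor is absorbed into $C$).

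Second, the difference $\wt H-\wt\zeta$ satisfies the passive scalar equation with forcing $-\kappa\,\wt{\mathbb P}_\bx(\na\uu\cdot\na_\bx\wt H)$. Testing with $\wt H-\wt\zeta$ and integrating the forcing by parts onto $\na_\bx(\wt H-\wt\zeta)$ gives, exactly as in \eqref{Diff_1st},
\[
\frac{d}{dt}\|\wt H-\wt\zeta\|^2\le -\kappa\|\na_\bx(\wt H-\wt\zeta)\|^2+C\kappa\|\uu\|_{C^2}^2\|\wt H\|^2.
\]
Here the $\de_\bx\uu$ part is handled with the $\bx$-Poincaré inequality, valid because both functions have zero $\bx$-mean. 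Integrating over $t\le\delta^{-1}\nu^{-1/2}$ and using $\kappa\le\nu$ yields the bound $C\nu^{1/2}\|\wt H(0)\|^2$, hence $\nu^{1/4}$ after taking square roots. Note that we only use $\kappa$-dissipation in $\bx$; the $\nu\de_\bp$ term is simply discarded.

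\textbf{Step 2: bootstrap for \eqref{G_est}.} We assume \eqref{G_est} with constant $2\mf B_1$ and \eqref{ED_2nd} with constant $2C_1^\ast$ on a maximal interval $[0,T_\ast)$.

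Integrating \eqref{EQ:2nd_2dh} over $(\bx,\bp)$ gives
\[
\frac{d}{dt}\|h\|_{L^1}=\kappa\iint\wt h\,\de_\bx\uu-G\|h\|_{L^1}.
\]
Duhamel's formula and the hypotheses bound the inhomogeneous part by $\kappa\,\delta^{-1}\nu^{-1/2}\le\delta^{-1}\nu^{1/2}$ times constants. This is small for $\nu\le\nu_0$, so $\|h(t)\|_{L^1}$ is comparable to $\|h_0\|_{L^1}e^{-\int_0^t G}$ within factors $[\tfrac12,\tfrac32]$. Combining this with the a priori bounds $\|\Pi\|_{L^\infty}^{-1}\le\|h\|_{L^1}\le\|\Pi^{-1}\|_{L^\infty}$, which follow from \eqref{nrm_t_2nd}, recovers \eqref{G_est} with constant $\mf B_1$, exactly as in Step 3 of the first-order proof.

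\textbf{Step 3: enhanced dissipation \eqref{ED_2nd}.} We partition time into intervals $[T_i,T_{i+1})$ with $T_i=i\delta^{-1}\nu^{-1/2}$ and restart the reference solution $\wt\zeta^{(i)}$ at each $T_i$.

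Theorem \ref{thm:ED} gives
\[
\|\wt\zeta^{(i)}(T_{i+1})\|\le C_0e^{-\delta_0\delta^{-1}}\|\wt H(T_i)\|=C_0e^{-5}\|\wt H(T_i)\|.
\]
To make this at most $\frac1{2e}\|\wt H(T_i)\|$, we either use that the constant $C_0$ of Theorem \ref{thm:ED} satisfies $C_0e^{-5}\le(2e)^{-1}$, or else enlarge the interval length by a fixed factor. Combining with Step 1 for $\nu$ small, we get $\|\wt H(T_{i+1})\|\le e^{-1}\|\wt H(T_i)\|$, and interpolating between grid times with the growth bound $e$ gives
\[
\|\wt H(t)\|\le e^2e^{-\delta\nu^{1/2}t}\|\wt H(0)\|.
\]
Finally, $h=He^{-\int_0^t G}$ with $|\int_0^t G|\le 2\mf B_1$ converts this into \eqref{ED_2nd} with constant $e^{2\mf B_1+2}=C_1^\ast$. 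This is strictly inside the hypothesis, so the bootstrap closes and $T_\ast=\infty$.

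The main obstacle is matching the constant of Theorem \ref{thm:ED} to the choice $\delta=\delta_0/5$. If $C_0$ is not small enough, I would lengthen the intervals $T_{i+1}-T_i$ to $K\delta^{-1}\nu^{-1/2}$ with $K$ depending only on $C_0$. Step 1 is unaffected by this, since it holds for any fixed multiple of $\nu^{-1/2}$ at the cost of the constant in \eqref{pssv_app}.
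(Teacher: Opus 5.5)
Your proposal is correct and follows essentially the same route as the paper: a bootstrap on $\int_0^t G$ and $\|\wt h\|_{L^2}$, the $L^1$/Duhamel argument (small because $\kappa\le\nu$) to recover the bound on $G$, and the comparison with a passive scalar restarted on intervals of length $\delta^{-1}\nu^{-1/2}$, with \eqref{pssv_app} coming out of the same Gr\"onwall estimates (the paper states it last, as Step 4, rather than first). Your hedge about the constant is unnecessary: \eqref{Hypo_est_ndeg_full} gives $C_0=e^{1/2}$, so $C_0e^{-5}=e^{-9/2}<(2e)^{-1}$ and the intervals need not be lengthened, and the extra $\kappa\Delta_\bx$ in the reference equation only contributes the decaying factor $e^{-\kappa|\bk|^2t}$ on each Fourier mode.
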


\begin{proof}  We apply a bootstrap argument and decompose the proof into three steps.   

\smallskip
\step{1: Setup.}
Assume that $[0,T_\ast)$ is the maximal interval on $[0,\infty)$ such that the following two hypotheses are satisfied for all $t\in[0,T_\ast)$:\begin{subequations}

\noindent a) Control on $G(t)$: 
\begin{equation}\label{Hyp_a-2} 
 \lf|\int_0^t   G(s)\ds\rg| \leq 2\lf(1+ \lf|\log\lf(\frac{2  \|e^{-\uu}\|_{L_\bx^\infty}^{-1}}{3\|f_0 e^\uu\|_{L^1_{\bx,\bp}}}\rg)\rg|+\lf|\log\lf(\frac{  2\|e^{\uu}\|_{L_\bx^\infty}}{\|f_0 e^\uu\|_{L^1_{\bx,\bp}}}\right)\rg|\rg)=:2\mathfrak{C}_1(f_0,\uu) .  
\end{equation}

\noindent
b) Enhanced Dissipation: 
\begin{align}\label{Hyp_b-2}
\|\wt h(t)\|_{L^2}\leq 2\mathfrak{C}_2\|\wt h_{0}\|_{L^2}e^{-{\delta}\nu^{1/2}t},\quad{\mathfrak C}_2:= e^{2{\mathfrak C}_1+2}. 
\end{align}
Here, the parameter $\delta$ is chosen in  \eqref{chc_del_2nd}. Moreover, the constants $\mathfrak{C}_1,\, \mathfrak{C}_2$ will only be used in this proof. 
 \end{subequations}\scalebox{0.001}{\eqref{Hyp_b-2}.}

The \emph{goal for the proof} is to show that on the same time interval, the estimates \eqref{Hyp_a-2} and \eqref{Hyp_b-2} can be improved provided the diffusion coefficient $0<\nu<\nu_0(f_0,\uu,\delta^{-1})$ is chosen small enough. The concrete improved  conclusions are stated below:\begin{subequations}
 \begin{align}\label{Con_a-2}
\lf|\int_0^t G(s)\ds\rg|\leq& \mathfrak{C}_1(f_0,\uu);\\ \label{Con_b-2}
\|\wt h(t)\|_{L^2}\leq& \mathfrak{C}_2(f_0,\uu)\|\wt h_{0}\|_{L^2}e^{-{\delta}\nu^{1/2}t}. 
\end{align}
\end{subequations}
The proof of \eqref{Con_a-2} will be presented in Step \# 2, and the proof of \eqref{Con_b-2} will be presented in Step \# 3. Finally, we develop \eqref{pssv_app} in Step \# 4. 

\smallskip
\step{2: Proof of \eqref{Con_a-2}.}
We can compute the time evolution of the function $h$ for all $t\in[0,\infty)$:
\begin{align}
&\ddt\| h\|_{L_{\bx,\bp}^1}= \kappa\iint \wt{h}\de_\bx \uu\,\dx\mathrm d\bp-\|h\|_{L_{\bx,\bp}^1} G(t)\;\leadsto\;\\
&\|h(t)\|_{L_{\bx,\bp}^1}=\|h_0\|_{L_{\bx,\bp}^1} e^{-\int_0^t G(s)\ds}    +\kappa\int_0^t\iint e^{-\int_s^t G(\tau)\textnormal{d}\tau} \wt{h}(s,\bx,\bp)\de_\bx \uu (\bx) \dx \mathrm{d} \bp\ds. \label{h_L1_1}
\end{align}
The structure of this equation mirrors that of \eqref{h_L1_T12}. In particular, the second term only involves the $\wt{h}$ remainder of the solution. Hence, thanks to the bootstrap hypothesis \eqref{Hyp_b-2} and the choice of $\kappa\leq \nu$, we have that the time integral of this term is small. The remaining argument of this step is similar to the proof of \eqref{Con_a}, and hence we omit it for the sake of brevity.

\smallskip
\step{3: Proof of \eqref{Con_b-2}.} The main difference in the argument comes from the analysis of the enhanced dissipation phenomenon. Here, the linear enhanced dissipation effect \eqref{Hypo_est_ndeg_full} is weaker because it only acts in the $\bx$-direction. 

First of all, we observe that the renormalized solution $H$ and the original solution $h$ are  comparable under the hypothesis \eqref{Hyp_a-2}:
\begin{align}
H(t,\bx,\bp) e^{-2{\mathfrak C}_1}\leq H(t,\bx,\bp) e^{-\int_0^t G(s)\ds}= h(t,\bx,\bp)\leq H(t,\bx,\bp)e^{2{\mathfrak C}_1},\quad\forall t\in[0,T_\ast). \label{Comparable_2nd}
\end{align}
Hence, a suitable estimate on $\wt H$ leads to an estimate of $\wt h$ as stated in \eqref{Con_b-2}.  Therefore, we will mainly focus on $\wt H$.

Now the \emph{goal} is to show that the $\wt H$ has enhanced dissipation of the form
\begin{align}
 \|\wt H(t)\|_{L^2}\leq e^2\|\wt{h}(0)\|_{L^2}e^{-\delta \nu^{1/2}t}.\label{nzm_con}
\end{align} Here, the $\delta$ is chosen as in \eqref{chc_del_2nd}. As a result of the comparison \eqref{Comparable_2nd}, the estimate \eqref{nzm_con} implies the conclusion  \eqref{Con_b-2}.  

To prove \eqref{nzm_con}, we implement the same time-discretization as in \eqref{Time_disc}. On each time interval $[T_i,T_{i+1}], \, T_i=i\delta^{-1}\nu^{-1/2}$, let $\wt\eta^{(i)}$ solve the reference equation
\begin{align}
\begin{cases}
&\pa_t \wt\eta^{(i)} +\bv\cdot\na_\bx \wt\eta^{(i)}=\nu\de_\bp \wt\eta^{(i)}+\kappa\de_\bx \wt\eta^{(i)},\\
&\wt\eta^{(i)}(T_i,\bx,\bp)=\wt H(T_i,\bx,\bp),\quad \forall t\in[T_i,T_{i+1}].
\end{cases}
\end{align}  
We carry out energy estimates similar to those used above, 
\begin{align} \label{DTR}
\frac{1}{2}\ddt\|\wt H\|_{L^2}^2=&-(\nu \|\na_\bp\wt H\|_{L^2}^2+\kappa\|\na_\bx\wt H\|_{L^2}^2)-\kappa\iint \wt H  \na_\bx \uu\cdot \na_\bx \wt H  \dx \mathrm{d}\bp
=: -\mathcal{D}+\mathcal{S}; \\ 
\label{DTD} \qquad \frac{1}{2}\ddt\|\wt H-\wt \eta^{(i)}\|_{L^2}^2=&- (\nu\|\na_\bp (\wt H-\wt \eta^{(i)})\|_{L^2}^2+\kappa\|\na_\bx(\wt H-\wt \eta^{(i)})\|_{L^2}^2)\\
&\quad-\kappa \iint  (\wt H-\wt\eta^{(i)})  \na_\bx\uu\cdot \na_\bx \wt H \dx \textnormal{d}\bp
=:-\mathcal{D}_\ast+\mathcal{S}_\ast. 
\end{align} 
Now we estimate the terms in \eqref{DTD}
\begin{align}\label{T}
|\mathcal{S}_\ast|\leq &\frac{1}{4}\kappa\|\na_\bx (\wt H-\wt\eta^{(i)})\|_{L^2}^2+C\kappa(\|\na_\bx\uu \|_{L^\infty}^2+\|\de_\bx\uu \|_{L^\infty}^2)\|\wt H\|_{L^2}^2\\
\leq&\frac{1}{4}\kappa\|\na_\bx (\wt H-\wt\eta^{(i)})\|_{L^2}^2+C(\|\uu\|_{C^2_\bx}){\nu} \|\wt H \|_{L^2}^2,
\end{align}
Hence,
\begin{align}
\ddt\|\wt H-\wt\eta^{(i)}\|_{L^2}^2\leq -\frac{1}{2}\bigl(\nu\|\na_\bp(\wt H-\wt \eta^{(i)})\|_{L^2}^2+\kappa\|\na_\bx(\wt H-\wt \eta^{(i)})\|_{L^2}^2\bigr)+C(\|\uu\|_{C^2}){\nu} \|\wt H \|_{L^2}^2.
\end{align}
Through similar estimates on the equation \eqref{DTR}, we have 
$
\ddt\|\wt H\|_{L^2}^2\leq C(\|\uu\|_{C^2 }) {\nu}\|\wt H\|_{L^2}^2 .
$ 
As a consequence of the above differential inequalities, by the Gr\"onwall inequality, for all $t\in[T_i,T_{i+1}=T_i+\delta^{-1}\nu^{-1/2}]$,
\begin{align}\begin{split}
&\|\wt H(t)\|_{L^2}^2\leq  e^{C(\|\uu\|_{C^2})\delta^{-1}\nu^{\frac{1}{2}}}\|\wt H(T_i)\|_{L^2}^2,\\
&\|\wt H-\wt \eta^{(i)}\|_{L^2}^2(t)\leq  \nu^{1/2} C(\|\uu\|_{C^2})\delta^{-1}e^{C(\|\uu\|_{C^2})\delta^{-1}\nu^{\frac{1}{2}}}\|\wt H(T_i)\|_{L^2}^2.\end{split}\label{dif_est_2nd}
\end{align}Hence, if $\nu_0=\nu_0(\delta^{-1}, \|\uu\|_{C^2})$ is chosen small enough, 
\begin{align}\label{chc_nu_1}
\begin{split}&\|\wt H(t)\|_{L^2}\leq e\|\wt H(T_i)\|_{L^2},\\
&\|\wt H-\wt \eta^{(i)}\|_{L^2}(t)\leq \frac{1}{4e}\|\wt H(T_i)\|_{L^2},\end{split}\quad\forall t\in[T_i,T_i+\delta^{-1}\nu^{-1/2}].
\end{align}
Thanks to the choice of $\delta=\frac{\delta_0}{5}$, we have that by the linear enhanced dissipation \eqref{Hypo_est_ndeg_full},
\begin{align*}
\|\wt\eta^{(i)}(T_i+\delta^{-1}\nu^{-1/2})\|_{L^2}\leq e^{\frac{1}{2}(1-\delta_0\nu^{\frac{1}{2}} 5\delta_0^{-1}\nu^{-\frac{1}{2}} )}\|\wt H(T_i)\|_{L^2}\leq \frac{1}{e^2}\|\wt H(T_i)\|_{L^2}.
\end{align*}
Combining the estimates above, we have that 
\begin{align}
\|\wt H(T_{i+1})\|_{L^2}\leq 
\|\wt H(T_{i+1})-\wt\eta^{(i)}(T_{i+1})\|_{L^2}+\|\wt\eta^{(i)}(T_{i+1})\|_{L^2}\leq \frac{1}{e}\|\wt H(T_i)\|_{L^2}.\label{disc_ED}
\end{align}
The remaining argument to derive the enhanced dissipation estimate \eqref{nzm_con} is identical to the argument in the first-order case \eqref{stndrdEDarg}. Hence, we omit the details. 

\smallskip
\step{4: Proof of \eqref{pssv_app}.} The estimate is a consequence of \eqref{dif_est_2nd}. We choose $\wt \zeta=\wt\eta^{(0)}$, and observe that $\nu\leq 1$ is sufficient to derive \eqref{dif_est_2nd}, which implies \eqref{pssv_app}. This concludes the proof.
\end{proof}

\subsection{Proof of Theorem \ref{thm_2nd_order}}\label{sec:Proof_Thm2nd}
We divide the proof into steps. 

\step{1: Convergence of the hydrodynamic density $[f]$.} 
In this step, we establish the convergence  (\ref{Sample_2nd}):  
$
\int_{\mathbb{T}^d} f(t,\bx,\bp)\textnormal{d}\bp\overset{t\rightarrow \infty}{\longrightarrow} \Pi(\bx). 
$ 
To this end, the key is to establish the following convergence
$
|\Torus|^{-d}\|h\|_{L_{\bx,\bp}^1}\overset{t\rightarrow \infty}{\longrightarrow} 1.
$ 
To see this convergence, we observe that by mass conservation \eqref{nrm_t_2nd} ($\|f(t)\|_{L_{\bx,\bp}^1}\equiv 1=\|\Pi\|_{L^1_\bx}$), we have
\begin{align}\label{ED_mass}
0&=\iint f \dx \textnormal{d}\bp-\int \Pi \dx = \iint\lf(h-\frac{1}{|\mathbb{T}|^d}\rg) \Pi \dx \textnormal{d}\bp=\iint\lf(\lan h\ran_\bx+\wt{h}-\frac{1}{|\mathbb{T}|^d}\rg) \Pi \dx \textnormal{d}\bp.
\end{align}
This, together with the enhanced dissipation \eqref{ED_2nd}, leads to the relation 
\begin{align}  \hspace{.23cm}
\begin{split}\Bigl||\Torus|^{-d}\|h&\|_{L^1_{\bx,\bp}}- 1 \Bigr|=\lf|\|\lan h\ran_\bx\|_{L^1_\bp}- 1\rg |=\lf|\int \lf(\lan h\ran_\bx-\frac{1}{|\mathbb T|^d}\rg) \textnormal{d}\bp \rg|=\lf|\iint \wt{h} \Pi \dx \textnormal{d}\bp \rg|\\
&\leq  |\mathbb T|^{d/2}\|\wt{h}\|_{L_{\bx,\bp}^2}\|\Pi\|_{L_\bx^2}\leq |\mathbb T|^{d/2}C_1^*(f_0,\uu)\|\wt{h}_0\|_{L_{\bx,\bp}^2}\|\Pi\|_{L_\bx^2}e^{-\delta\nu^{1/2}t}\phantom{\inf}\\
&=|\mathbb T|^{d/2}C_1^*(f_0,\uu)\|\wt{\mathbb{P}}_\bx(f_{0}e^\uu)\|_{L_{\bx,\bp}^2}\|e^{-\uu}\|_{L_\bx^2}e^{-\delta\nu^{1/2}t}.\phantom{\inf}\end{split}\label{h_bound_1}
\end{align} Here, we recall the definition of $\wt{\mathbb{P}}_\bx$ in \eqref{P_avg_tild}. 
Now we make the decomposition:
\begin{align*}
&\lf\|{\int f(t,\bx,\bp) \textnormal{d}\bp}-\Pi\rg\|_{L^1_{\bx}}=\lf\|{\int f(t,\bx,\bp) \textnormal{d}\bp}-\frac{\int h(t,\bx,\bp)\Pi \mathrm d\bp}{|\Torus|^{-d}\| h\|_{L_{\bx,\bp}^1} }+\frac{\int h(t,\bx,\bp)\Pi \mathrm d\bp}{|\Torus|^{-d}\| h\|_{L_{\bx,\bp}^1} }-\Pi\rg\|_{L^1_\bx} \\
&\leq \lf\|{\int f(t,\bx,\bp) \textnormal{d}\bp}-\frac{\int h(t,\bx,\bp)\Pi \mr d\bp}{|\Torus|^{-d}\| h\|_{L_{\bx,\bp}^1} }\rg\|_{L^1_{\bx}}+\lf\|\frac{\int h(t,\bx,\bp)\Pi \mathrm d\bp}{\int \lan h\ran \textnormal{d}\bp }-\Pi\rg\|_{L_\bx^1}
=:T_1+T_2.
 \end{align*}
Here we have used the fact that $h\geq 0$ and $\|\lan h\ran_\bx\|_{L^1_\bp}=|\Torus|^{-d}\|h\|_{L_{\bx,\bp}^1}$ (recall that $\lan h\ran_\bx:=|\Torus|^{-d}\int h(t,\bx,\bp)\dx$ \eqref{P_avg_tild}). We further note that 
$
\int f(t,\bx,\bp)\textnormal{d}\bp =\int h (t,\bx,\bp)\textnormal{d}\bp \Pi.
$ 
Thanks to the estimate $\|h\|_{L^1_{\bx,\bp}}\geq \|f\|_{L^1_{\bx,\bp}}\|{\Pi}\|_{L_{\bx}^\infty}^{-1}=\|{\Pi}\|_{L_{\bx}^\infty}^{-1}$ and the relation \eqref{h_bound_1}, we have that,
\begin{align}\label{T_1_sam}
T_1=\lf\|\int f\mathrm{d}\bp\lf(1-\frac{|\Torus|^d}{\|h\|_{L_{\bx,\bp}^1}}\rg)\rg\|_{L_\bx^1}= \lf|  \frac{\|h\|_{L^1_{\bx,\bp}}-|\Torus|^{{d}}}{\|h\|_{L^1_{\bx,\bp}}}\rg|\leq  C(f_0,\uu)\|{\Pi}\|_{L_{\bx}^\infty}\|\wt{\mathbb{P}}_\bx(f_{0}e^\uu)\|_{L_{\bx,\bp}^2}e^{-\delta\nu^{1/2}t}. 
\end{align}
On the other hand, we invoke the enhanced dissipation to obtain that
 \begin{align*}
T_2=&\lf\|\frac{\int \Pi(\bx)\lf(\lan h\ran (t,\bp)+\wt{h}(t,\bx,\bp)\rg) \textnormal{d}\bp}{\int \lan h\ran(t,\bp) \textnormal{d}\bp } -\Pi(\bx)\rg\|_{L_\bx^1}\\
\leq&\frac{\|\Pi \wt{h}(t)\|_{L_{\bx,\bp}^1}}{\|\lan h\ran_\bx(t)\|_{L^1_\bp}}\leq \frac{|\Torus|^{d/2}\|\Pi\|_{L_\bx^2}\|\wt{h}(t)\|_{L_{\bx,\bp}^2}}{\|h(t)\|_{L_{\bx,\bp}^1}|\Torus|^{-d}}
\leq  \|{\Pi}\|_{L_{\bx}^\infty}|\Torus|^{3d/2} C(f_0,\uu)\|e^{-\uu}\|_{L^2}\|\wt{\mathbb{P}}_\bx(f_{0}e^\uu) \|_{L^2}e^{-\delta\nu^{1/2} t}.
\end{align*} Here, we recall the notation $\wt{\mathbb{P}}_\bx$ from \eqref{P_avg_tild}. 
Hence, we have obtained the bound \eqref{Sample_2nd}. 

\smallskip
\step{2: Approximation.} In this step, we discuss $L_\bx^\infty$-convergence of the dynamics to the target $\Pi$. To this end, we consider higher-regularity estimates for the solution $h$. We adopt the $d$-dimensional multi-index notation $D_\bx^\al=\prod_{j=1}^d\pa_{x_j}^{\al_j}$, $|\al|=\sum_{j=1}^d\al_j$, $\al'\leq\al$ if and only if $\al_j'\leq\al_j$ for every $j$, and $\al'<\al$ if and only if $\al'\leq\al$ and $\al'\neq\al$. We then consider the renormalized solution $H$ \eqref{EQ:2nd_H} and derive higher-regularity estimates. 

First of all, we observe that the $\bx$-derivatives of the solution are straightforward. For any fixed $M\in \mathbb{N}$, we have that 
\begin{align*}
\frac{1}{2}\ddt\sum_{|\al|\leq M}\|D_\bx^\al H\|_{L^2}^2&=-\kappa\sum_{|\al|\leq M}\|\na_{\bx}D_\bx^\al H\|_{L^2}^2-\nu\sum_{|\al|\leq M}\|
{\na_{\bp}}D_\bx^\al H\|_{L^2}^2\\
&\quad-\sum_{|\al|\leq M}\sum_{\al'\leq \al}C_{\al,\al'}\kappa\iint \na_\bx D^{\al'}_\bx\uu\cdot \na_\bx D_\bx^{\al-\al'}H  D_\bx^{\al}H d\bx \textnormal{d}\bp\\
& \leq C_M\kappa \|\uu\|_{W^{M+1,\infty}_\bx}^2\sum_{|\al|\leq M}\|D_\bx^{\al}H \|_{L^2}^2.
\end{align*} 
As a consequence, we have that $\|H(t)\|_{H_\bx^M}\leq \|H(0)\|_{H_\bx ^M} e^{C_\ast(M) \kappa\|\uu\|_{W^{M+1,\infty}_\bx}^2 t}. $ 

Next, we observe that $f=h \Pi =H \Pi e^{\int_0^t G(s)\ds}$ \eqref{Rnrm_sln} and the integral factor is bounded by \eqref{G_est}. 
As a consequence,
$
\|f (t)\|_{H_{\bx}^M}\leq C(M,f_0,\uu,Z) \exp\{C_\ast(M)\kappa \|\uu\|_{W_\bx^{M+1,\infty}}^2 t\},\quad \kappa\leq \nu.
$

Since the $L^1$-difference between $\dss \int f(t,\bx,\bp)\textnormal{d}\bp $ and $\Pi$ decays to zero at an enhanced rate $\mathcal O(\nu^{1/2})$, an application of the Nash interpolation inequality (with $M>d/2$) yields that the $\int f(t,\bx,\bp)\textnormal{d}\bp $ converges to $\Pi$ in the space $L_\bx^\infty$ as $t\rightarrow\infty$ \eqref{Sample_2nd}, 
\begin{align*}
\lf\|\int_{\mathbb{T}^d} f \textnormal{d}\bp-\Pi \rg\|_{L^\infty_\bx}\leq& C(d,M)\lf\|\int_{\mathbb{T}^d} f \textnormal{d}\bp-\Pi\rg\|_{L_\bx^1}^{\gamma(d,M)}\lf\|\int_{\mathbb{T}^d} (f -|\mathbb{T}|^{-d}\Pi)\textnormal{d}\bp \rg\|_{H_\bx^M}^{1-\gamma(d,M)}\\
\leq& C(d,f_0,\uu,Z)e^{-\delta\gamma \nu^{1/2}t}\sum_{|\al|\leq M}\lf(\int\lf(\int_{\mathbb{T}^d} D_\bx^\al(f -|\mathbb{T}|^{-d}\Pi)\textnormal{d}\bp \rg)^2d\bx\rg)^{(1-\gamma)/2}\\
\leq& C(d,f_0,\uu,Z)e^{-\delta\gamma \nu^{1/2}t}(\|f(t)\|_{H^M_{\bx}L^2_\bp}+\|\Pi\|_{H^M_\bx})^{1-\gamma}\\
\leq& C(d,f_0,\uu,Z)\exp\lf\{C_\ast(M)(1-\gamma)\nu \|\uu\|_{W_\bx^{M+1,\infty}}^2 t-\delta\gamma \nu^{1/2}t\rg\}\\
\leq&C(d,f_0,\uu,Z)e^{-\delta\gamma \nu^{1/2}t/2}.
\end{align*} Here, we have chosen $\nu$ to be small enough in the last line. This completes the proof of the approximation estimate \eqref{App_2nd}.

\smallskip
\step{3: Nonlinear Mixing.}
Now we focus on the initial time layer $t\in[0,\delta^{-1}\nu^{-1/2} ]$ to derive the nonlinear mixing estimate \eqref{Sample_2ndMix}. We start by proving the following. 

\begin{lem}
Assume the conditions in Theorem \ref{thm_2nd_order}. There exists a constant $C_{\rm ID}=C_{\rm ID}(\varsigma,f_0,\uu,\delta^{-1},d)$ such that the following estimate holds for all $F\in H_{\bx,\bp}^{1\vee\{d/2+\varsigma\}}$:
\begin{align}  \label{nonlin_ID}
\quad \lf|\iint\wt{h}(t)F \dx \mathrm{d}\bp\rg|\leq \frac{C_{\rm ID}(\varsigma,f_0,\uu,\delta^{-1},d)   }{ t^{1/2} }\|\wt h_{ 0} \|_{H_{\bx,\bp}^{1\vee\{d/2+\varsigma\}}}\|F\|_{H_{\bx,\bp}^{1\vee\{d/2+\varsigma\}}},\ \  \forall t\in(0, \delta^{-1}\nu^{-1/2}].  
\end{align}
\end{lem}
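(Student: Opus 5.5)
Within the initial layer $t\in(0,\delta^{-1}\nu^{-1/2}]$ we reduce the nonlinear statement to a linear mixing estimate for the passive scalar \eqref{pssv}, using the comparison \eqref{Comparable_2nd} and the approximation \eqref{pssv_app}. Write $\wt h(t)=e^{-\int_0^t G(s)\ds}\wt H(t)$. By \eqref{G_est}, the prefactor is bounded by $e^{\mathfrak C_1}$, so it suffices to bound $|\iint \wt H(t)F\dx\textnormal{d}\bp|$. Split $\wt H=\wt\zeta+(\wt H-\wt\zeta)$, where $\wt\zeta$ solves \eqref{pssv} with data $\wt H(0)=\wt h_0$. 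For the difference we use \eqref{pssv_app} and Cauchy--Schwarz:
\[
\Bigl|\iint (\wt H-\wt\zeta)F\Bigr|\le C\nu^{1/4}\|\wt h_0\|_{L^2}\|F\|_{L^2}.
\]
Since $t\le \delta^{-1}\nu^{-1/2}$, we have $\nu^{1/4}\le \delta^{-1/2}t^{-1/2}$, so this term already has the required form.

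The main term is $\iint \wt\zeta(t)F$, a linear mixing estimate for the kinetic transport $\bv(\bp)\cdot\na_\bx$ with weak diffusion. Expand in Fourier modes in $\bx$: for $\bk\neq0$, $\hat\zeta_\bk(t,\bp)$ solves
\[
\pa_t\hat\zeta_\bk+i\bk\cdot\bv(\bp)\hat\zeta_\bk=\nu\de_\bp\hat\zeta_\bk-\kappa|\bk|^2\hat\zeta_\bk .
\]
The plan is to compare with the inviscid solution $e^{-it\bk\cdot\bv(\bp)}\hat\zeta_\bk(0,\bp)$ and estimate $\int e^{-it\bk\cdot\bv(\bp)}\hat\zeta_\bk(0,\bp)\overline{\hat F_\bk(\bp)}\textnormal{d}\bp$ by stationary phase. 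Pick $j$ with $k_j\neq0$. The phase $k_j\sin p_j$ has nondegenerate critical points at $p_j=\pm\pi/2$, so a one-dimensional van der Corput or stationary-phase bound in the $p_j$ variable gives decay $(|k_j|t)^{-1/2}$. The constants involve one $p_j$-derivative of the amplitude $\hat\zeta_\bk(0)\overline{\hat F_\bk}$, which explains the $H^1$ requirement in $\bp$. The $\kappa|\bk|^2$ damping only helps.

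The viscous correction $\nu\de_\bp$ is the main obstacle. I would handle it by Duhamel on $[0,t]$, or equivalently with a Hörmander-type vector field $\partial_{p_j}+it k_j\cos p_j$ commuting approximately with the flow. The key point is that $\|\na_\bp\zeta_\bk\|\lesssim (1+|\bk|t)\|\zeta_\bk(0)\|_{H^1}$, and that over times $t\le\nu^{-1/2}$ the cost $\nu t\cdot(|\bk|t)^2$-type growth must be absorbed. This will likely require using the dissipation $\nu\int_0^t\|\na_\bp\hat\zeta_\bk\|^2\le\|\hat\zeta_\bk(0)\|^2$ rather than a pointwise derivative bound. If this becomes delicate, I would fall back to interpolating the $t^{-1/2}$ decay with the error term already of size $\nu^{1/4}\lesssim t^{-1/2}$.

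Finally, sum over $\bk\neq0$. Each mode gives
\[
C t^{-1/2}|k_j|^{-1/2}\|\hat\zeta_\bk(0)\|_{H^1_\bp}\|\hat F_\bk\|_{H^1_\bp},
\]
and we need $\sum_\bk$ of these products to be bounded by $H^{\sigma}_{\bx,\bp}$ norms. The choice $\sigma=1\vee(d/2+\varsigma)$ suggests putting the amplitude factor in an $L^\infty$-type (Wiener algebra) norm in $\bx$. For that, Sobolev embedding $H^{d/2+\varsigma}\subset \ell^1$ in Fourier and Cauchy--Schwarz over $\bk$ suffice. Collecting constants yields \eqref{nonlin_ID} with $C_{\rm ID}$ depending on $\varsigma,f_0,\uu,\delta^{-1},d$.
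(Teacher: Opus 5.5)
You reduce the problem exactly as the paper does: renormalize by $e^{\int_0^t G}$, control $\iint(\wt H-\wt\zeta)F$ by \eqref{pssv_app}, and use $\nu^{1/4}\le\delta^{-1/2}t^{-1/2}$. The gap is the main term $\iint\wt\zeta(t)F$. It needs a $t^{-1/2}$ mixing bound for the \emph{viscous} passive scalar on the whole window $t\le\delta^{-1}\nu^{-1/2}$, and the tools you propose do not reach that time scale.

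\begin{itemize}
\item Stationary phase only handles the inviscid evolution. Compare with it by duality, setting $\psi(s)=e^{i(t-s)\bk\cdot\bv}\wh F_\bk$. The error is $\nu\int_0^t\langle\na_\bp\wh\zeta_\bk,\na_\bp\psi\rangle\,\mathrm ds$. Use the dissipation bound $\nu\int_0^t\|\na_\bp\wh\zeta_\bk\|^2\le\|\wh\zeta_\bk(0)\|^2$ you suggest, together with $\|\na_\bp\psi(s)\|\lesssim(1+|\bk|(t-s))\|\wh F_\bk\|_{H^1}$. The error is still of size $\nu^{1/2}|\bk|t^{3/2}\|\wh\zeta_\bk(0)\|_{L^2}\|\wh F_\bk\|_{H^1}$, which is $\lesssim t^{-1/2}$ only for $t\lesssim\nu^{-1/4}|\bk|^{-1/2}$.
\item The unregularized field $\pa_{p_j}+itk_j\cos p_j$ does not rescue this. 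Its commutator with $\nu\de_\bp$ produces a forcing, once $\pa_{p_j}\wh\zeta_\bk$ is re-expressed through the field. Apart from a harmless imaginary potential, this forcing is $i\nu tk_j\cos p_j\,\wh\zeta_\bk+2\nu t^2k_j^2\sin p_j\cos p_j\,\wh\zeta_\bk$. The resulting bound on the field, crudely $O(\nu k_j^2t^3)$, is useless at $t\sim\nu^{-1/2}$.
\item The fallback of interpolating with the $\nu^{1/4}$ term is not available either. That term measures $\wt H-\wt\zeta$, not the viscous-versus-inviscid discrepancy, so it says nothing about $\iint\wt\zeta F$.
\end{itemize}

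What is missing is a bound on a good derivative that is uniform in time up to $t\sim\nu^{-1/2}$. This is the content of Theorem \ref{thm:mix}, which the paper simply invokes here. It first writes $\wh\zeta_\bk=e^{-\kappa|\bk|^2t}(\cdot)$ to reduce to \eqref{eq:hyp_x}, and moves the contraction $e^{\kappa t\de_\bx}$ onto $F$.

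The proof of Theorem \ref{thm:mix} uses the regularized fields $\Gamma_m^\pm=\pa_{y_m}+\A^\pm\cos(y_m)ik_m$, with $\A^\pm=\tanh((1\mp i)\sqrt{\nu|k_m|}t)/((1\mp i)\sqrt{\nu|k_m|})$. These saturate at $(\nu|k_m|)^{-1/2}$. Their equation \eqref{eq_Gamma} carries a damping $\msc H_\pm$ with $\Re\msc H_\pm\ge0$, and a forcing that degenerates at the relevant critical point. Localizing with $\chi_\pm$ then gives the uniform bound \eqref{Ga_est}. The $t^{-1/2}$ estimate follows by four steps: excise a $\zeta$-neighbourhood of the critical point, divide by $\A^+ik_m\cos y_m$ elsewhere, integrate by parts, and take $\zeta\sim t^{-1/2}$.

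Had you cited Theorem \ref{thm:mix} for the main term, your argument would be complete. The $\bk$-sum is then just Plancherel and Cauchy--Schwarz. The $d/2+\varsigma$ in $\sigma$ comes from the embedding $H^\sigma_\bp\subset L^\infty_\bp$ inside that proof, not from a Wiener-algebra bound in $\bx$.
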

\begin{proof}
Throughout this proof, we define $\sigma:=1\vee\{d/2+\varsigma\}.$ We organize the proof in three steps. 

\noindent{\bf Step \# 1: Setup and error estimate. } Instead of considering the coupling between $\wt h$ and $F$ directly, we consider the coupling
$\lf|\iint\wt{H}(t)F \dx \mathrm{d}\bp\rg|. 
$ 
Here $\wt H$ is the solution to \eqref{h_eq_nq} and is the renormalized solution associated with $\wt h$ through the formula \eqref{Rnrm_sln}. We further consider the passive scalar solution $ \zeta$ \eqref{pssv} initialized with the same initial data $\wt H_0$ on the time interval $[0,\delta^{-1}\nu^{-1/2}]$. Combining all the considerations above, we have that 
\begin{align}
\begin{split}\lf|\iint\wt{h}(t)F \dx \mathrm{d}\bp\rg|&=e^{-\int_0^t G(s)ds}\lf|\iint\wt{H}(t)F \dx \mathrm{d}\bp\rg|\\
&\hspace{-2cm}\leq e^{-\int_0^t G(s)ds}\lf|\iint(\wt{H}(t)-\zeta(t))F \dx \mathrm{d}\bp\rg|+e^{-\int_0^t G(s)ds}\lf|\iint \zeta (t)F \dx \mathrm{d}\bp\rg|=:T_1+T_2.\end{split}\label{nonlin_ID_pf}
\end{align}
{Thanks to \eqref{pssv_app} and the relation \eqref{Rnrm_sln}, we have that}   
\begin{align}
\|\wt H-\zeta\|_{L^2}(t)&\leq C(\delta^{-1},\uu)\nu^{1/4}\|\wt H_0\|_{L^2}=C(\delta^{-1},\uu)\nu^{1/4}\|\wt h_0\|_{L^2},\quad t\in[0,\delta^{-1}\nu^{-1/2}].
\end{align}
As a consequence, we have that the first term in \eqref{nonlin_ID_pf} can be estimated as follows for all $t\in[0,\delta^{-1}\nu^{-1/2}]$,
\begin{align}\label{nonlin_ID_pf1}
\quad T_1=e^{-\int_0^t G(s)ds}\lf|\iint(\wt{H}(t)-\zeta(t))F \dx \mathrm{d}\bp\rg|\leq C(\delta^{-1},\uu)\nu^{1/4}e^{-\int_0^t G(s)ds}\|\wt h_0\|_{L_{\bx,\bp}^2}\|F\|_{L^2_{\bx,\bp}}.
\end{align}

\noindent 
{\bf Step \# 2: Mixing of the passive scalar. }
Next, we observe that the $\bx$-Fourier transform of $\zeta$ solves the following equation:
\begin{align}
&\pa_t \wh{\zeta}(t,\bk,\bp)+i\bv(\bp)\cdot \bk\wh{\zeta}(t,\bk,\bp)=\nu\de_\bp \wh{\zeta}(t,\bk,\bp)-\kappa|\bk|^2 \wh{\zeta}(t,\bk,\bp),\\
&\quad \wh{\zeta}(t=0,\bk,\bp)=\wh H_0(\bk,\bp),\quad |\bk|\neq 0. 
\end{align}
One can reformulate the problem as follows:
\begin{align}
&\pa_t \lf(\wh{\zeta}(t,\bk,\bp)e^{\kappa|\bk|^2t}\rg)+i\bv(\bp)\cdot \bk\lf(\wh{\zeta}(t,\bk,\bp)e^{\kappa|\bk|^2t}\rg)=\nu\de_\bp \lf(\wh{\zeta}(t,\bk,\bp)e^{\kappa|\bk|^2t}\rg),\\
&\quad \wh{\zeta}(t=0,\bk,\bp)=\wh h_0(\bk,\bp),\quad |\bk|\neq 0. 
\end{align} We observe that the equation is identical to the equation \eqref{eq:hyp_x} with slightly adjusted notation ($\bp$ is the same as $\by$ and $\bv(\bp)$ is the same as $u_1(\by)$). Hence, the linear inviscid damping estimate in Theorem \ref{thm:mix} and the relation \eqref{linear_ID_pf} ensure the following for all $t\geq0$,
\begin{equation}\begin{split}
\lf|\iint \zeta(t)  {F} \dx\mathrm{d}\bp\rg|=&\lf|\sum_{|\bk|\neq 0} \int e^{\kappa|\bk|^2 t}\wh{\zeta}_{\bk}(t,\bp)\overline{e^{-\kappa|\bk|^2 t}\wh F_\bk(\bp)}d\bp\rg|\\
\leq& C_{d,\varsigma} \min\lf\{\frac{\nu^{1/4}}{\min\{1,\nu^{1/4}t^{1/2}\}},e^{-\delta_{0}\nu^{1/2}t}\rg\}\|\wt{h}_0\|_{H_{\bx,\bp}^{\sigma}}\|e^{\kappa t\de_\bx}{F}\|_{L^\infty _t H_{\bx,\bp}^{\sigma}}\\
\leq& C_{d,\varsigma} \min\lf\{\frac{\nu^{1/4}}{\min\{1,\nu^{1/4}t^{1/2}\}},e^{-\delta_{0}\nu^{1/2}t}\rg\}\|\wt{h}_0\|_{H_{\bx,\bp}^{\sigma}}\|{F}\|_{ H_{\bx,\bp}^{\sigma}}.\end{split}\label{nonlin_ID_pf2}
\end{equation}
Here, in the last line, we have used the fact that $e^{\kappa t\de_\bx}$ is a contraction in $H^{\sigma}_{\bx,\bp}$. 

\noindent{\bf Step \# 3: Conclusion.} Finally, we observe that for $t\leq \delta^{-1}\nu^{-1/2}$, the relation $\nu^{\frac14}\leq C(\delta^{-1})t^{-\frac12}$ holds.  
Combining the decomposition \eqref{nonlin_ID_pf} with the estimates \eqref{nonlin_ID_pf1} and \eqref{nonlin_ID_pf2}, we obtain, for all $t\in(0, \delta^{-1}\nu^{-1/2} ]$:{\small
\begin{align}
    \lf|\iint\wt{h}(t)F \dx \mathrm{d}\bp\rg|
    &\leq C(\delta^{-1},\uu,d,\varsigma)e^{-\int_0^t G(s)ds}\Big(\nu^{1/4} +  \min\lf\{\frac{\nu^{1/4}}{\min\{1,\nu^{1/4}t^{1/2}\}},e^{-\delta_{0}\nu^{1/2}t}\rg\}\Big)\|\wt{h}_0\|_{H_{\bx,\bp}^{\sigma}}\|{F}\|_{ H_{\bx,\bp}^{\sigma}}\\
    &\leq C(\delta^{-1},\uu,d,\varsigma)\frac{e^{-\int_0^t G(s)ds}}{t^{1/2}} \|\wt h_{0}\|_{H^{\sigma}}\|{F}\|_{ H_{\bx,\bp}^{\sigma}}
    \leq \frac{C(\delta^{-1},f_0,\uu,d,\varsigma)}{t^{1/2}} \|\wt h_{0}\|_{H^{\sigma}}\|{F}\|_{ H_{\bx,\bp}^{\sigma}}.
\end{align} }Here, in the last line, we used the estimate \eqref{G_est}. 
This concludes the proof of \eqref{nonlin_ID}. 
\end{proof}

\begin{proof}[Proof of Theorem \ref{thm_2nd_order}] Using the lemma above, we are finally ready to derive the last estimate \eqref{Sample_2ndMix} in Theorem \ref{thm_2nd_order}. 
We assume that $\varphi\in H_\bx^{\sigma}$ and recall 
$[f](t,\bx):=\int_{\mathbb{T}^{d}} f(t,\bx,\bp)\dpp$ \eqref{hydrodnsty}, and consider the difference,
\begin{align}
{\mathscr T}:=&\lf|\int \varphi ( \bx )  \Pi ( \bx )  \dx - \int \varphi(\bx) [f](t,\bx) \dx\rg|
=\lf|\int \varphi ( \bx )  \Pi ( \bx )  \dx - \iint  f(t,\bx,\bp)   \varphi(\bx)\dx \textnormal{d}\bp\rg|. 
\end{align}
 We observe that by the relations $h\Pi =f$ and $\|\br h\|_{L^1_\bp}=|\Torus|^{-d}\|h\|_{L^1_{\bx,\bp}}$, the following decomposition holds 
\begin{equation}\label{observable}
\begin{split}
\lf|\iint f(t,\bx,\bp)\varphi(\bx) \textnormal{d}\bp \dx\right.&\left.-\int\varphi(\bx) \Pi(\bx) \dx\rg|  \leq\lf|\iint f(t,\bx ,\bp)\varphi(\bx ) \textnormal{d}\bp\dx \lf( 1-\frac{1}{\|\br h\|_{L^1_\bp} }\rg)\rg|\\
 & \ \ +\lf|\int\frac{\int h(t,\bx ,\bp)\Pi (\bx )\varphi (\bx )\mathrm{d}\bp}{\| \lan h\ran \|_{L^1_\bp} }-\Pi \varphi \dx \rg |=:T_1+T_2.
 \end{split}
 \end{equation}
We start by  considering the term $T_1$ with a focus on estimating the factor $1-\frac{1}{|\Torus|^{-d}\| h\|_{L^1} }$. This quantity can be estimated by a suitable combination of $\wt h$ and $\Pi$. Hence, we first invoke the nonlinear mixing estimate  \eqref{nonlin_ID} to derive the following bound 
\begin{align}
\lf|\iint \wt{h} (t,\bx ,\bp) \Pi(\bx) \textnormal{d}\bp \dx \rg|&=\lf|\iint \wt{h} (t,\bx ,\bp) (\Pi(\bx)-\lan\Pi\ran) \textnormal{d}\bp \dx \rg|\\
&\leq \frac{C(\varsigma,f_0,\uu,\delta^{-1},d)}{t^{1/2}}\|\wt h_{0}\|_{H^{\sigma}_{\bx,\bp}}\|\Pi-|\Torus|^{-{ d}}\|_{H^{\sigma}_{\bx  }}.
\end{align}
Here, we have used that $\lan\Pi\ran_\bx(\bp)\equiv|\Torus|^{-{  d}}$ and the fact that the function $\Pi-|\Torus|^{-d}$ does not depend on $\bp$.  Now the relations  $\|\br h_\bx\|_{L^1_\bp}=|\Torus|^{-d}\|h\|_{L^1_{\bx,\bp}}$  and $0=\iint (\lan h\ran_\bx+\wt{h}-\frac{1}{|\mathbb{T}|^d} ) \Pi$  \eqref{ED_mass}, together with the estimate above, yield the following: 
\begin{align}
||\Torus|^{-d}\|h&\|_{L^1_{\bx,\bp}}- 1 |=|\|\lan h\ran_\bx\|_{L^1_\bp}- 1 |=\lf|\int \lf(\lan h\ran_\bx-\frac{1}{|\mathbb T|^d}\rg) \textnormal{d}\bp\int \Pi(\bx)\dx \rg|\\
&=\lf|\iint \wt{h} \Pi \dx \textnormal{d}\bp \rg|=\lf|\iint \wt{h} (\Pi-|\Torus|^{-d}) \dx \textnormal{d}\bp \rg|
\leq   \frac{C(\varsigma,f_0,\uu,\delta^{-1},d)}{t^{1/2}}\|\wt h_{0}\|_{H^{\sigma}_{\bx,\bp}}\|\Pi-|\Torus|^{-d}\|_{H^{\sigma}_{\bx,\bp }}.
\end{align}
Hence, thanks to the estimate $\|h\|_{L^1_{\bx ,\bp}}\geq \|{\Pi}\|_{L_{\bx}^\infty}^{-1}$, the fact that $\|f(t)\|_{L^1}=1$ and the Sobolev embedding,
\begin{align*}
T_1&\leq \lf|\iint f  \varphi \dx \textnormal{d}\bp\rg| \lf| \frac{\|h\|_{L^1_{\bx ,\bp}}-|\Torus|^{{ d}}}{\|h\|_{L^1_{\bx ,\bp}}}\rg|\\
&\leq  C(d)\|f\|_{L_{\bx,\bp}^1}\|\varphi\|_{L_\bx^\infty}\|{\Pi}\|_{L_{\bx}^\infty}\frac{C(\varsigma,f_0,\uu,\delta^{-1},d)  }{t^{1/2}}\|\wt h_{0}\|_{H^{\sigma}_{\bx,\bp}}\|\Pi-|\Torus|^{-d}\|_{H^{\sigma}_{\bx }}
\leq \frac{C(d,\varsigma, f_0,\uu,\Pi)}{t^{1/2}}\|\varphi\|_{H^{\sigma}_\bx} . 
\end{align*}
For the $T_2$-term in \eqref{observable},  we observe a cancellation of the $\br{h}_\bx$-information,  
 \begin{align*}
T_2=&\frac{1}{\|\br h_\bx\|_{L^1_\bp}}\lf |\iint \lf(\lan h\ran_\bx (t,\bp)+\wt{h}(t,\bx ,\bp)\rg)  \varphi(\bx )\Pi(\bx )\textnormal{d}\bp\dx  -{\int \lan h\ran_\bx(t,\bp) \textnormal{d}\bp }\int\varphi(\bx )\Pi(\bx )\dx \rg |\\
=&\frac{1}{\|\br h_\bx\|_{L^1_\bp}}\lf |\iint \wt{h}(t,\bx ,\bp) \varphi(\bx )\Pi(\bx )\textnormal{d}\bp\dx  \rg |.
\end{align*}
Now we invoke duality, the lower bound $\|\br h_\bx\|_{L^1_\bp}\ge |\Torus|^{-d}\|\Pi\|_{L^\infty_\bx}^{-1}$ and the mixing estimate \eqref{nonlin_ID}, to obtain the bound
\begin{align*}
T_2
\leq&\frac{C(d,\varsigma, f_0,\uu)}{t^{1/2}}\|{\Pi}\|_{L_{\bx }^\infty}\| \Pi \varphi-\lan{\Pi \varphi}\ran_\bx\|_{H_\bx^{\sigma}}\|\wt h_{0}\|_{H^{\sigma}_{\bx,\bp}}
\leq \frac{C(d,\varsigma, f_0,\uu,\Pi)}{t^{1/2}} \|  \varphi \|_{H_\bx^{\sigma}}.
\end{align*} 
In the last line, we have used the product estimate for Sobolev functions and the observation that $\br{\Pi \varphi}_\bx$ is a constant.  
Hence, we obtain $
\mathscr{T}\leq \frac{C(d,\varsigma,\Pi,\uu, f_0)}{t^{1/2}}\|\varphi\|_{H_\bx^{\sigma}},\ \forall t\in(0, \delta^{-1}\nu^{-1/2}].$
This concludes the proof of \eqref{Sample_2ndMix} and therefore  the proof of Theorem \ref{thm_2nd_order}. 
\end{proof}

\section{Numerical Simulations}\label{sec:Numerical_Result}
\subsection{Numerical Setup}
In this section, we present a numerical experiment for the flow-accelerated Fokker--Planck equation \eqref{EQ:2nd_Ord} to illustrate the accelerated convergence from arbitrary initial data to the target invariant measure $\Pi$, as proved in Theorem \ref{thm_2nd_order}. The design is based on an unconditionally stable finite-volume scheme.

For a cleaner presentation, we choose the Vicsek-type equation in \eqref{sphere_model} with $d=2$ and $\kappa = \nu$:
\begin{align}\label{eq:recall3d}
\begin{cases}&\dss\pa_t f=\nu\de_\te f+ \nu\na_\bx\cdot \lf(f\na_\bx\log\lf(\frac{f}{\Pi}\rg)\rg)-\Pi\ \na_\bx \cdot\lf(\frac{\bp f}{\Pi}\rg)- {f}\iint f\, \bp\cdot \na_\bx \log\Pi\; \dx d \te\\
&\dss f(t=0,\bx,\te)= f_{0} (\bx,\te),   \quad  \bx\in \Torus^2,\, \te\in \mathbb{S}, \end{cases}
\end{align} 
where $\bp(\te)=(\cos\te, \sin\te)^T$. All the enhanced estimates developed in the previous sections can be adapted to this setting. 
Similar to the renormalization procedure in \eqref{Rnrm_sln}, we observe that  if $f$ solves \eqref{eq:recall3d}, then $\tilde{f}=f \exp\big\{\int_0^t \iint f\, \bp\cdot \na_\bx \log\Pi\; \dx d \te d s\big\}$ solves the equation (after dropping $\tilde{(\cdot)}$)
\begin{align}\label{eq:3d_birth}
\pa_t f =&\nu\de_\te f+\nu\na_\bx\cdot\lf(f\na_\bx\log\lf(\frac{f}{\Pi}\rg)\rg)-\Pi\na_\bx\cdot\lf(\frac{\bp f}{\Pi}\rg),     \quad  \bx=(x_1,x_2)\in \Torus^2,\, \theta\in \mathbb{S}. 
\end{align}
Thus, we only need to design schemes to compute \eqref{eq:3d_birth}; the renormalized solution $\frac{f(t,\bx,\te)}{\int f(t,\bx,\te) \mathrm d\bx\,\mathrm d\te}$ then solves the original equation \eqref{eq:recall3d}.
The corresponding $h$-equation (after dropping tilde) in terms of $h=\frac{f}{\Pi}$ is
$
\pa_t h\myh{=&\nu\de_\te \frac{f}{\Pi}+\frac{\nu}{\Pi}\na_\bx\cdot\lf(f\na_\bx\log\lf(\frac{f}{\Pi}\rg)\rg)-\na_\bx\cdot(\bp \frac{f}{\Pi})\\}
=\nu\de h+\nu\na_\bx(\log \Pi)\cdot \na_\bx h-\binom{\cos\te}{\sin\te}\cdot\na_\bx h. $

 We first recast the Fokker-Planck equation \eqref{eq:3d_birth} as follows
\begin{equation}\label{FP-N}
\begin{aligned}
\pa_t f &=\nu\nabla_\te  \cdot\bbs{\Pi \nabla_{\te} \frac{f}{\Pi}} +\nu \nabla_\bx \cdot \lf( \Pi \nabla_\bx \frac{f}{\Pi} \rg)-\Pi\na_\bx\cdot\lf(\bp \frac{f}{\Pi}\rg)   \\ 
&= \nu \nabla \cdot \lf( \Pi \nabla \frac{f}{\Pi} \rg)-\Pi\na_\bx\cdot\lf(\bp \frac{f}{\Pi}\rg), \quad  \bx=(x_1,x_2)\in \Torus^2,\, \theta\in \Torus. 
\end{aligned} 
\end{equation}
Here $\bp = (\cos \theta, \sin \theta)^\top$ so we have $\nabla_\bx  \cdot \bp = 0$. Expressed in terms of the drift $\bb=(\cos \theta, \sin \theta, 0)^\top$, the last term on the right of \eqref{FP-N} reads
$-\Pi\na_\bx\cdot\Big(\bp \frac{f}{\Pi}\Big)= -\Pi\na \cdot\Big(\bb \frac{f}{\Pi}\Big).$

To design a finite volume scheme based on a general Voronoi tessellation in $\Torus^d$ , we let $x_i, \ i=1,\ldots, n$ denote the cell (or volume) centers
and define the Voronoi cells
\begin{equation}
C_i:= \{x\in \Torus^d; d( x,x_i)\leq d(x,x_j) \text{ for all }x_j\} \quad  \text{ with volume } |C_i|=\hs^d(C_i).
\end{equation}
Here $\hs^d(C_i)$ is the $d$-dimensional Hausdorff measure of cell $C_i$.
Then $\Torus^d=\bigcup_{i=1}^n C_i$ is a Voronoi tessellation of $\Torus^d$. Let $V\!F(i):=\{j; ~\Gamma_{ij}\neq \emptyset\}$ denote the non-empty faces adjacent to  cell $C_i$, 
\begin{equation}
\Gamma_{ij}:= C_i\cap C_j   \text{ with its area  } |\Gamma_{ij}|=\hs^{d-1}(\Gamma_{ij}), \qquad j=1, \cdots, n.
\end{equation}
In each cell $C_i$, the discrete densities $f_i|C_i|$, $\Pi_i|C_i|$  are computed through flux interface, using the following  finite-volume scheme.
Let $\vec{n}\in \bR^d$
denote the outward unit normal vector field on $\pt C_i$ (pointing from $i$ to its adjacent $j$), and let 
$(\bb \cdot \vec{n})^\pm_{ij}\geq0$
denote the positive and respectively negative parts of $\vec{b} \cdot \vec{n}$ at the intersection point of the  geodesic from $x_i$ to $x_j$. 
Integrating \eqref{FP-N} over $C_i$,
\begin{equation}\label{tt310}
\begin{aligned}
\frac{\ud }{\ud t} f_i|C_i|\approx \frac{\ud }{\ud t} \int_{C_i} f\,\mathrm d\hs^d 
\approx& \sum_{j\in V\!F(i) } \int_{\Gamma_{ij}}  \mathbf n \cdot \bbs{ \nu \Pi\nabla \frac{f}{\Pi} }\,\mathrm d\hs^{d-1} + \Pi_i \sum_{j\in V\!F(i) } \int_{\Gamma_{ij}}  \mathbf n \cdot \bbs{ -\bb  \frac{f}{\Pi} }\,\mathrm d\hs^{d-1},
\end{aligned}
\end{equation}
where we use the constant cell probabilities
  $f_i|C_i|$ and $\Pi_i|C_i|$ to approximate the exact solution   on each cell $C_i$.
We end up with  the following finite volume scheme for cells $i=1, \cdots, n$,
\begin{equation}\label{mp}
\frac{\ud}{\ud t}f_i |C_i|=  \sum_{j\in V\!F(i)} |\Gamma_{ij}| \bbs{ \nu \frac{\Pi_i+\Pi_j}{2|x_j-x_i|} \bbs{\frac{f_j}{\Pi_j} - \frac{f_i}{\Pi_i}}   + \big[(\vec{b}\cdot \vec{n})_{ij}^- \frac{f_j}{\Pi_j} -  (\vec{b}\cdot \vec{n})_{ij}^+ \frac{f_i}{\Pi_i}]\Pi_i }.
\end{equation}
A detailed 3D time-space discretization and  stability analysis of \eqref{mp} are given in Appendix \ref{sec_app2}.

\subsection{Numerical Results}\label{sec:numerical-results}

Based on the upwind finite-volume scheme \eqref{mp}, we compute the evolution of the dynamic density $f$ from the prescribed initial data toward a smile-shaped target invariant measure. 

Take a smile-shaped target invariant measure
 \begin{equation}\label{smile}
 \begin{aligned}
 \Pi(x,y) \propto &e^{ -20\big[\bbs{x-\frac65}^2+\bbs{y-\frac{6}{5}}^2 - \frac12\big]^2  + \log\bbs{e^{-10(y-2)^2}} } +  e^{-20 \big[\bbs{x+\frac65}^2+\bbs{y-\frac{6}{5}}^2 - \frac12\big]^2 + \log\bbs{e^{-10(y-2)^2}}} \\
 &+ e^{-20 \bbs{x^2+y^2 - 2}^2 + \log\bbs{e^{-10(y+1)^2}}}+0.1;
 \end{aligned}
 \end{equation}
 see the sixth subfigure in the bottom right corner of Fig.\ref{fig_evl}.
 
Take a  Gaussian mixture
\begin{equation}\label{f_initial}
f_0(x,y)\propto e^{-16(x+3)^2-4y^2}+ e^{-16(x-3)^2-4y^2}+ e^{-4x^2-16(y+3)^2}+ e^{-4x^2-16(y-3)^2}+0.1
\end{equation}
as an initial density $f_0(x,y,\theta)=f_0(x,y)$; see the first subfigure in the top left corner of Fig.\ref{fig_evl}. 
\begin{figure}\includegraphics[scale=0.34]{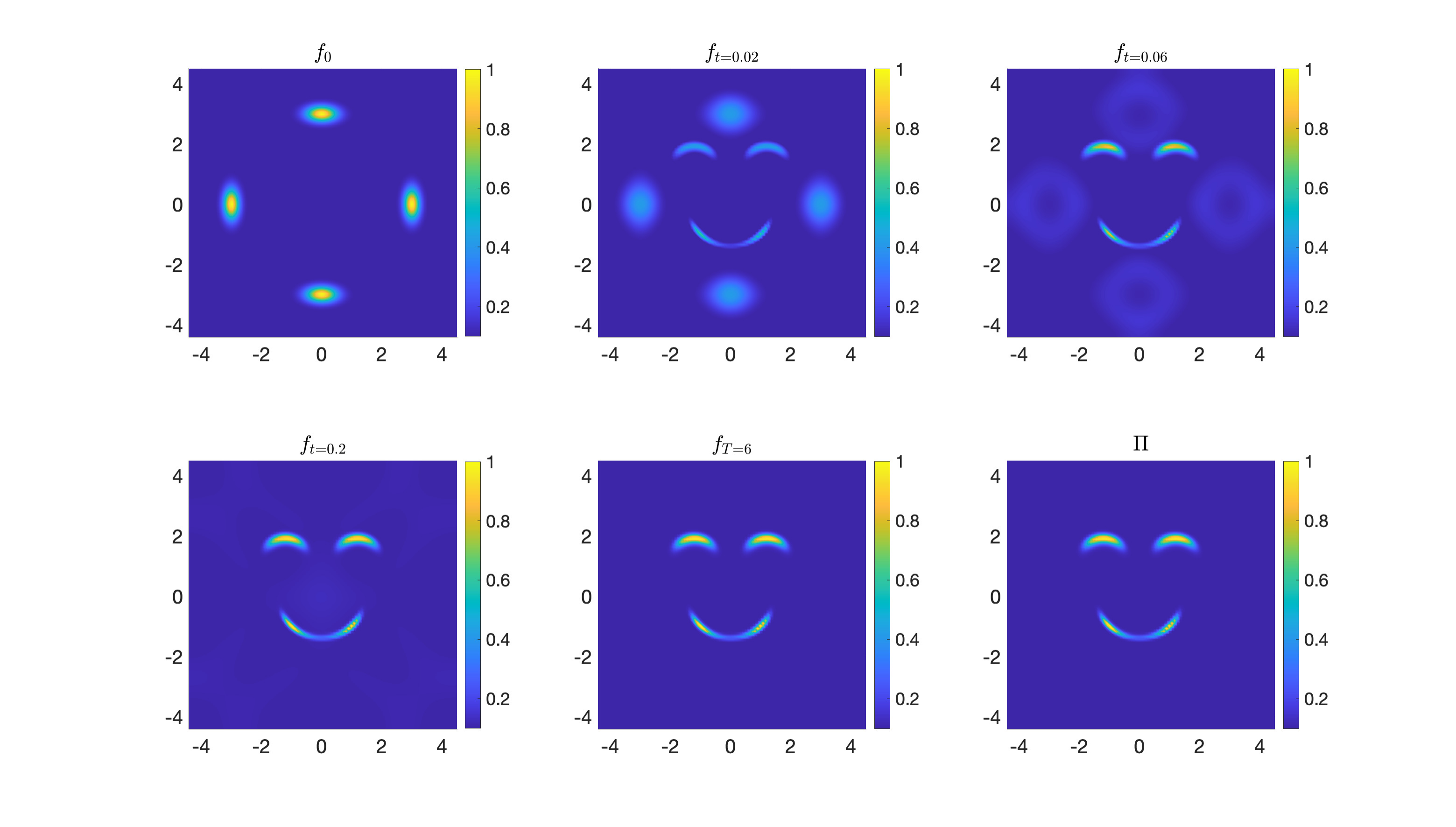} 
\caption{The time evolution of density $f$ computed from scheme \eqref{mp}. The initial density $f_0$ is taken as \eqref{f_initial} and the target invariant measure is taken as \eqref{smile}. Snapshots of $f_t$ are shown at $t=0.02, 0.05, 0.2, 6.$}\label{fig_evl}
\end{figure} 

The time evolution of the hydrodynamic density $\int f(t,x,y,\theta)\,\mathrm d\theta$ is shown in Fig. \ref{fig_evl}
with the time step   $\Delta t=0.002$, the diffusion constant   $\kappa=\nu=0.25$ and the total computational time   $T=6$. The grid sizes are listed in detail in Appendix \ref{sec_app2}. One can see the time-dependent density $f$ converges rapidly to the invariant measure $\Pi$. In particular, the density $f$ at $t=0.2$ and $T=6$ (shown at the bottom of Fig.\ref{fig_evl}) already looks very similar to the invariant measure $\Pi.$   

The $L^1$ and relative $L^2$ errors are computed via $\|\int_0^{2\pi} f(t,x,y,\theta) \ud \theta - \Pi \|_{L^1_{x,y}}$ and $\left\|\frac{2\pi f}{\Pi}-1\right\|_{L^2_{x,y,\theta}}$, respectively. The decay of these errors with respect to time is shown in Fig. \ref{fig_error} with a red dashed line and a black dashed line, respectively. 
\begin{figure}[ht]
\includegraphics[scale=0.3]{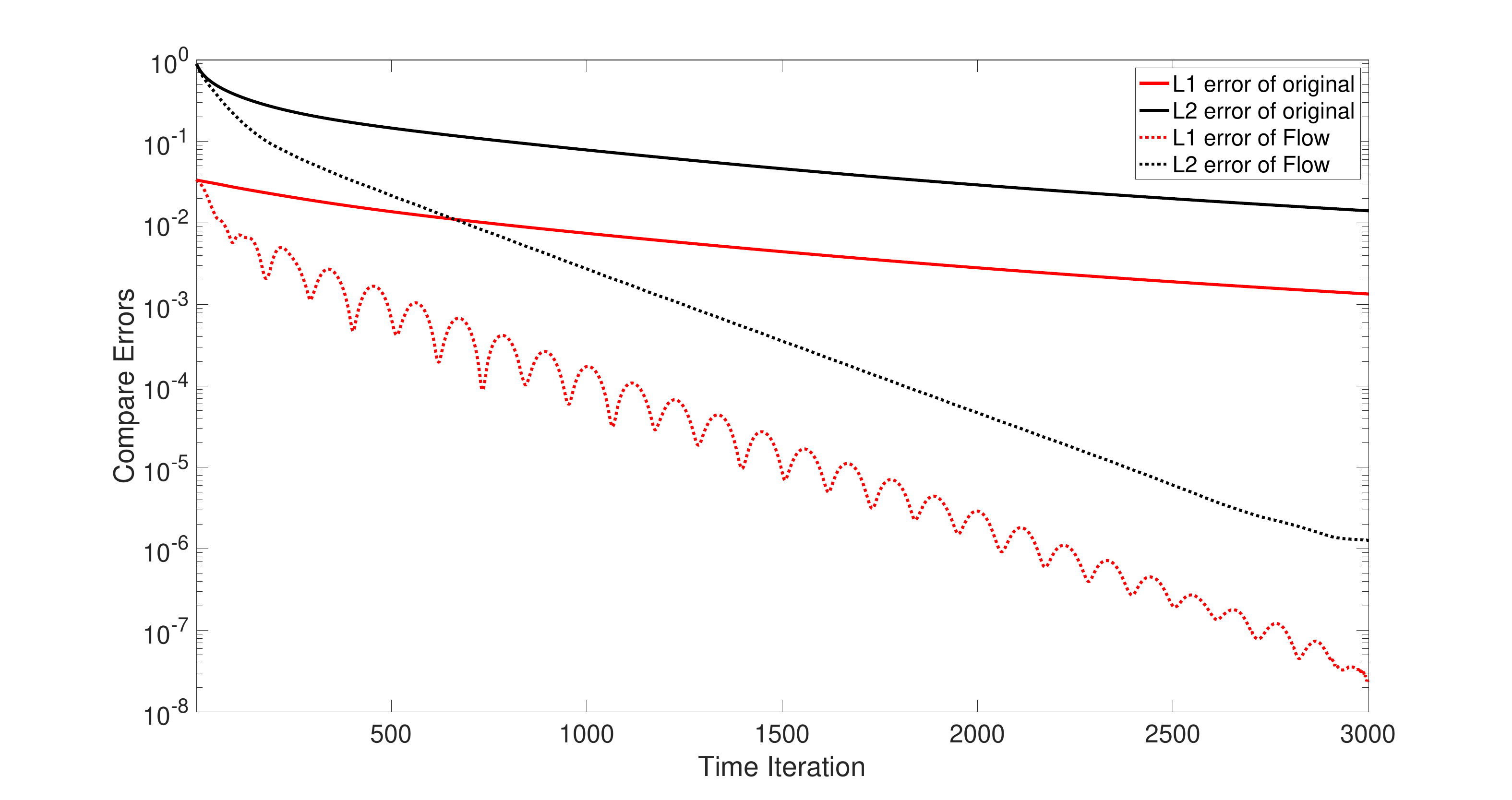}
\caption{The decays of $L^1$ and relative $L^2$ errors between the dynamic density and the invariant measure are shown for both the flow-accelerated Fokker-Planck equation \eqref{eq:recall3d} and the original   Fokker-Planck equation in the gradient flow form \eqref{reducedFP}. The solid red and black lines represent the decay of $L^1$ and $L^2$ errors for \eqref{reducedFP}. The dashed red and black lines represent the decay of $L^1$ and $L^2$ errors for \eqref{eq:recall3d} after flow acceleration. }\label{fig_error}
\end{figure} 
  
For comparison, we also show the convergence rate of the dynamic solutions to the invariant measure for both the original linear Fokker-Planck equation in the gradient flow form and the flow-accelerated Fokker-Planck equation \eqref{eq:recall3d}.  For the original linear Fokker-Planck equation in the gradient flow form, i.e., the no-flow case $\bb=0$, the scheme \eqref{mp} reduces to a classical finite volume scheme for
\begin{equation}\label{reducedFP}
\pa_t f =  \nu\na_\bx\cdot\lf(f\na_\bx\log\lf(\frac{f}{\Pi}\rg)\rg) = \nu \nabla_\bx \cdot \lf( \Pi \nabla_\bx \frac{f}{\Pi} \rg).
\end{equation}
We refer to \cite{gao2023data} for the convergence analysis of the finite volume scheme for the reduced Fokker-Planck equation \eqref{reducedFP} on a compact manifold.
 The decay rates of the $L^1$ and relative $L^2$ errors for the original linear Fokker-Planck equation in the gradient flow form are shown in Fig. \ref{fig_error} with a red solid line and a black solid line, respectively.

\appendix
\section{Connections between the Models}

\begin{lem}[{\bf Normalization}] \label{lem:Connections}
There is a one-to-one correspondence between the regular solution $\widehat{\rho}$ to the \emph{simplified} dynamics \eqref{EQ:1st_Ord_sim} and the \emph{mass-preserving} dynamics \eqref{EQ:1st_Ord}. Specifically, given a solution $\widehat{\rho}$ to \eqref{EQ:1st_Ord_sim}, the mass-normalized solution $\widehat{\rho}/\|\widehat {\rho}\|_{L_\bx^1}$ solves \eqref{EQ:1st_Ord}. Similarly, there is a one-to-one correspondence between the regular solution $\widehat{f}$ to the \emph{simplified} dynamics \eqref{EQ:2nd_Ord_sim} and the \emph{mass-preserving} dynamics \eqref{EQ:2nd_Ord}. 
\end{lem}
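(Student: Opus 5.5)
The plan is a direct computation. Given a regular solution $\widehat\rho\ge0$ of \eqref{EQ:1st_Ord_sim}, set $m(t):=\|\widehat\rho(t)\|_{L^1_\bx}=\int\widehat\rho\,\dx$ (positivity from the maximum principle) and $\rho:=\widehat\rho/m$. Integrating \eqref{EQ:1st_Ord_sim} over $\Torus^d$ kills the transport and Langevin terms: $\vv\cdot\na\widehat\rho=\na\cdot(\vv\widehat\rho)$ by $\na\cdot\vv=0$, and $\nu\de\widehat\rho+\nu\na\cdot(\widehat\rho\na\uu)$ is a divergence. This gives the scalar ODE
\begin{align}
\dot m(t)=-\int\widehat\rho\,\vv\cdot\na_\bx\uu\,\dx=-m(t)\int\rho\,\mathbb D\,\dx .
\end{align}
Then I compute $\pa_t\rho=\pa_t\widehat\rho/m-\rho\,\dot m/m$. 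Since every term on the right of \eqref{EQ:1st_Ord_sim} is linear in $\widehat\rho$, dividing by $m$ gives
\begin{align}
\pa_t\rho+\na\cdot(\vv\rho)=\nu\de\rho+\nu\na\cdot(\rho\na\uu)-\rho\,\mathbb D(\bx)+\rho\int\rho(\by)\mathbb D(\by)\dy .
\end{align}
Using $\int\rho=1$, the last two terms equal $\rho\int(\mathbb D(\by)-\mathbb D(\bx))\rho(\by)\dy=\mathcal Q_{\uu,\vv}[\rho]$. This is exactly \eqref{EQ:1st_Ord}, with $\rho(0)=\rho_0$ because $m(0)=\|\rho_0\|_{L^1}=1$.

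For the converse, I start from a regular solution $\rho$ of \eqref{EQ:1st_Ord}, which has unit mass by the mass-conservation computation in Section \ref{sec:First_Order}. I define
\begin{align}
m(t):=\exp\Big\{-\int_0^t\int\rho\,\mathbb D\,\dx\,\ds\Big\},\qquad \widehat\rho:=m\rho .
\end{align}
Reversing the calculation shows that $\widehat\rho$ solves \eqref{EQ:1st_Ord_sim}, and its mass is $m$. So the two maps $\widehat\rho\mapsto\widehat\rho/\|\widehat\rho\|_{L^1}$ and $\rho\mapsto m\rho$ are mutually inverse. Uniqueness of regular solutions of each problem, which is standard local well-posedness for the linear equation and for the renormalized equation \eqref{EQ:1stord_h_1}, makes the correspondence one-to-one.

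The second-order case is the same argument on $\Torus^d_\bx\times\Torus^d_\bp$. Integrating \eqref{EQ:2nd_Ord_sim} in $(\bx,\bp)$, the terms $\bv(\bp)\cdot\na_\bx\widehat f=\na_\bx\cdot(\bv\widehat f)$, $\nu\de_\bp\widehat f$ and the $\kappa$-Langevin terms all integrate to zero. This leaves $\dot m=-m\iint f\,\bv\cdot\na_\bx\uu$, and the quotient $f=\widehat f/m$ produces $\mathcal Q_{\uu,\bv}[f]$.

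The only delicate point is not algebraic. It is to ensure that $m(t)>0$ and that the mass equals the integral, i.e.\ positivity of $\widehat\rho$ and $\widehat f$. I would obtain this from the maximum principle, since the zeroth-order term $-\widehat\rho\,\vv\cdot\na\uu$ has a bounded coefficient. It is then equally clear from the explicit exponential formula for $m$.
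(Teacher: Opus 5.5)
Your proposal is correct and is essentially the paper's argument: the mass ODE $\ddt\|\widehat\rho\|_{L^1_\bx}=-\int\widehat\rho\,\vv\cdot\na_\bx\uu\,\dx$ gives the forward direction, and renormalizing by $\exp\{-\int_0^t\int\rho\,\mathbb D\,\dx\,\ds\}$ gives the converse. Your minus sign is the right one; the paper's proof writes $\widehat\rho=\rho\exp\{+\int_0^t g(s)\,\ds\}$, which is a sign slip. Finally, your appeal to uniqueness is not needed, since you have already shown that the two maps are mutual inverses.
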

\begin{proof} 
We focus on the first-order model because the argument for the second-order model is similar. 
\myh{Recall\begin{align}\label{eq:app_1}&\begin{cases}&\displaystyle\pa_t \rho+ {\vphantom{\int}\vv \cdot\na_\bx \rho} = {\vphantom{\int}\nu\de_\bx \rho+\nu\na_\bx\cdot(\rho \na_\bx{\uu})}   + {\rho\Big(-\vv\cdot \na_\bx {\uu} +\int \rho\, \vv\cdot \na {\uu} \dy\Big)} ,\\
&\na_\bx\cdot\vv(t,\bx)=0,\quad\rho(t=0,\bx)=\rho_{0}(\bx),\quad (t,\bx)\in \rr_+\times \Torus^d.  \end{cases}
\end{align}} 
As in the renormalization procedure in \eqref{Rnrm_sln}, denote 
$g(t):=\int\rho\, \vv\cdot \na_\bx \uu\;\dx=\int \wt \rho \na_\bx \cdot(\vv \uu)\; \dx.
$ 
One can see that if $\rho$ solves the \emph{mass-preserving} dynamics \eqref{EQ:1st_Ord}
, then $\widehat{\rho}=\rho \exp\{\int_0^t g(s) d s\}$ solves the following equation
\begin{align}\label{eq:App_2}
\pa_t \widehat \rho+\vv\cdot\na_\bx\widehat \rho =&\nu\de_\bx \widehat \rho+\nu\na_\bx\cdot\lf(\widehat \rho\na_\bx\uu\rg)-\widehat\rho\vv\cdot\na\uu. 
\end{align}
On the other hand, if $\widehat\rho\geq 0$ solves the equation 
\eqref{eq:App_2}, its mass solves the equation:
$
\ddt\|\wh \rho\|_{L_\bx^1}=-\int \wh \rho \vv\cdot \nabla\uu\dx.$ Combining this relation and \eqref{eq:App_2} yields \eqref{EQ:1st_Ord}
. This establishes the relation between regular solutions to \eqref{EQ:1st_Ord} 
 and \eqref{eq:App_2}. 
\end{proof}
\section{Space-time Discretization and Unconditional Stability}\label{sec_app2}
We outline the numerical scheme used for the computations in section \ref{sec:numerical-results}, based in the 3D domain box $\Omega:= [a,b]\times [c,d]\times [e,f]$ with  $[a,b]=[-4.5,4.5], [c,d]=[-4.5,4.5]$ and $[e,f]=[0,2\pi]$. We use an equi-partition of $\Omega$  into small cells of size $\Delta x = \frac{b-a}{I}, \Delta y= \frac{d-c}{J}$ and $\Delta z=\frac{f-e}{K}$  with $I=100, J=100, K=50.$ 
Let $(x_i,y_j,z_k)$ denote the cell centers
\begin{align*}
x_i = a+ (i-\tfrac{1}{2})\Delta x,  \ \  y_j= c+ (j-\tfrac{1}{2}) \Delta y, \ \  z_k = e + (k-\tfrac{1}{2})\Delta z, \quad  1\leq i\leq I, \ 1\leq j\leq J, \  1\leq k \leq K.
\end{align*}
We use $f_\bl(t)$ to approximate the value of $f(t,x_i, y_j,z_k)$ and set, $\displaystyle g_\bl:=\frac{f_\bl}{\pi_\bl}$ with $\pi_\bl=\pi(x_i, y_j,z_k)$,  where $\bl$ stand for the multi-index $(i,j,k)$. The finite volume discretization \eqref{mp} reads (we use the usual notation of $\mu^\pm$ and $\delta^\pm$ to denote forward and backward  averaging and differencing\footnote{Thus, for example, $\mu_y^+f_{i,j,k}=\tfrac{1}{2}(f_{i,j+1,k}+f_{i,j,k})$ and $\delta_x^-f_{i,j,k}=f_{i,j,k}-f_{i-1,j,k}$.}, and abbreviate $c^\pm_k=\cos^\pm(z_k), s_k^\pm=\sin^\pm(z_k)$)
\begin{align}\label{fp_n}
\dot{f}_{\bl}&=\frac{1}{\Delta x^2}\Big((\nu \mu_x^+\pi_{\bl}+\Delta x c_k^-\pi_\bl)\delta_x^+g_\bl -(\nu \mu_x^-\pi_\bl + \Delta xc_k^+\pi_{\bl})\delta_x^- g_\bl\Big)\\
+& \frac{1}{\Delta y^2}\Big((\nu \mu_y^+\pi_{\bl}+\Delta y s_k^-\pi_\bl)\delta_y^+g_\bl -\left(\nu \mu_y^-\pi_\bl + \Delta y s_k^+\pi_{\bl}\right)\delta_y^- g_\bl\Big) + \frac{1}{\Delta z^2}\Big((\nu \mu_z^+\pi_{\bl})\delta_z^+g_\bl -(\nu \mu_z^-\pi_\bl )\delta_z^- g_\bl\Big).
\end{align}
This is augmented with periodic boundary conditions for $f_\bl$ and $\pi_\bl$.
The expression on the right involves a seven point stencil: the value at the cell center, $g_{i,j,k}$, and its six neighboring values $\{g_\bl :  |\bl-(i,j,k)|=1\}$; we use the abbreviated writing $\dot{f}_\bl=RHS(g_\bl)$. We then use semi-implicit time discretization. Denoting $f^n_\bl=f(t^n,x_i,y_j,z_k)$  as the value of $f$ at $t^n=n\Delta t$, we compute
 \begin{equation}\label{fp_t}
\frac{f^{n+1}_{i,j,k}-f^{n}_{i,j,k}}{\Delta t} = RHS\big(g^{n+1}_{i,j,k}, g^n_{\bl :  |\bl-(i,j,k)|=1}\big)
\end{equation}
This enables us to recover an explicit form of $f^{n+1}_\bl$.
Define
\begin{equation}
\begin{aligned}
\lambda_\bl:= &  \frac{1}{\Delta x ^2}\left(\frac{\nu}{\pi_\bl} (\mu_x^+\pi_\bl
+\mu_x^-\pi_\bl)+\Delta x(c_k^++c_k^-)\right) \\
 & \ \ + 
\frac{1}{\Delta y ^2}\left(\frac{\nu}{\pi_\bl}(\mu_y^+\pi_\bl+ \mu_y^-\pi_\bl) +\Delta y(s_k^++s_k^-)\right)
+ \frac{1}{\Delta z^2}\left(\frac{\nu}{\pi_\bk}(\mu_z^+\pi_\bl + \mu_z^-\pi_\bl)\right),
\end{aligned}
\end{equation}
then \eqref{fp_t} reads
\begin{equation}\label{tm310}
\begin{aligned}
(1+\Delta t \lambda_\bl)g^{n+1}_\bl = 
 g^{n}_\bl
& +  \frac{\Delta t}{\Delta x ^2} \Big(\frac{\nu}{\pi_\bl}\mu_x^+\pi_\bl+ \Delta x c_k^-\Big)g^n_{i+1,j,k}  + \frac{\Delta t}{\Delta x ^2}\Big(\frac{\nu}{\pi_\bl} \mu_x^-\pi_\bl+\Delta x c_k^+\Big)g^{n}_{i-1,j,k} \\
& + \frac{\Delta t}{\Delta y ^2} \Big(\frac{\nu}{\pi_\bl}\mu_y^+\pi_\bl+ \Delta y s_k^-\Big)g^n_{i,j+1,k}  + \frac{\Delta t}{\Delta y ^2}\Big(\frac{\nu}{\pi_\bl} \mu_y^-\pi_\bl+\Delta y s_k^+\Big)g^{n}_{i,j-1,k} \\
&+ \frac{\Delta t}{\Delta z ^2} \Big(\frac{\nu}{\pi_\bl}\mu_z^+\pi_\bl\Big)g^n_{i,j,k+1}  + \frac{\Delta t}{\Delta z ^2}\Big(\frac{\nu}{\pi_\bl} \mu_z^-\pi_\bl\Big)g^{n}_{i,j,k-1}.
\end{aligned}
\end{equation}
Observe that the coefficients of the seven $g^n$-terms corresponding to seven-point stencil on the right of \eqref{tm310} are non-negative and sum up to $1+\Delta t\lambda_\bl$.
Thus, $g^{n+1}_{i,j,k}$ is given as a convex combination of $g^n_{i,j,k}$ and its six 
neighboring values that participate in the stencil, $g^n_{\bl: |\bl-(i,j,k)|=1}$.
In particular, we conclude that $\displaystyle \frac{f}{\pi}$ and likewise, $\displaystyle \frac{f}{\pi}-1$, satisfy the maximum principle.
\begin{lem}
If $\pi>0$ and initial data $f_0>0$, then the scheme \eqref{tm310} satisfies
\begin{enumerate}
\item positivity-preserving property $f^n_{i,j,k}>0$ for any $i,j,k,n;$
\item unconditional maximum principle: 
$\displaystyle 
\max_{i,j,k} \frac{f^{n+1}_{i,j,k}}{\pi_{i,j,k}} \leq  \max_{i,j,k} \frac{f^{n}_{i,j,k}}{\pi_{i,j,k}};
$ 
\item $\ell^\8$ stability:
$\displaystyle 
\max_{i,j,k} \Big|\frac{f^{n+1}_{i,j,k}}{\pi_{i,j,k}}-1\Big| \leq  \max_{i,j,k} \Big|\frac{f^{n}_{i,j,k}}{\pi_{i,j,k}}-1\Big|.
$
\end{enumerate}
\end{lem}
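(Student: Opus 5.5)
The argument rests on the observation made just before the lemma: after solving \eqref{tm310} for $g^{n+1}_\bl$, the value $g^{n+1}_\bl$ is a convex combination of $g^n_\bl$ and its six stencil neighbours. I will first check this observation carefully and then derive the three claims from it in order.

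\textbf{Convexity.} All coefficients on the right of \eqref{tm310} involve only $\nu>0$, averages $\mu^\pm\pi_\bl>0$ (since $\pi>0$), $\pi_\bl^{-1}>0$, and the positive or negative parts $c_k^\pm, s_k^\pm\geq0$. Hence every coefficient is nonnegative, and the coefficient of $g^n_\bl$ itself equals $1$. Adding the six neighbour coefficients reproduces $\Delta t\,\lambda_\bl$, because $\lambda_\bl$ is defined as exactly that sum. I will need to verify that the upwind pairing matches, i.e.\ $c_k^-$ pairs with $g_{i+1}$ and $c_k^+$ with $g_{i-1}$, so the $x$-contributions total $\Delta x(c_k^++c_k^-)$; the $y$ and $z$ directions work the same way. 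Dividing by $1+\Delta t\lambda_\bl>0$ then gives weights $w_{\bl,\bl'}\geq0$ with $\sum_{\bl'}w_{\bl,\bl'}=1$, for every $\Delta t>0$. This holds with no CFL restriction, which is why the result is unconditional.

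\textbf{Claim 1 (positivity).} I argue by induction on $n$. If $f^n>0$ then $g^n=f^n/\pi>0$ everywhere. In the convex combination the central weight $1/(1+\Delta t\lambda_\bl)$ is strictly positive, so $g^{n+1}_\bl\geq g^n_\bl/(1+\Delta t\lambda_\bl)>0$. Therefore $f^{n+1}_\bl=\pi_\bl g^{n+1}_\bl>0$.

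\textbf{Claim 2 (maximum principle).} Each convex combination is bounded by the largest value entering it, so $g^{n+1}_\bl\leq\max_{\bl'}g^n_{\bl'}$. Periodic boundary conditions mean every neighbour index lies inside the grid, so this maximum is a genuine grid maximum. Taking the maximum over $\bl$ gives the claim.

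\textbf{Claim 3 ($\ell^\infty$ stability).} Because the weights sum to one, $g^{n+1}_\bl-1=\sum_{\bl'}w_{\bl,\bl'}(g^n_{\bl'}-1)$. Then $|g^{n+1}_\bl-1|\leq\max_{\bl'}|g^n_{\bl'}-1|$ by the triangle inequality.

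The only real obstacle is the bookkeeping in the convexity step, namely matching each upwind part to the correct neighbour so that the coefficients sum to exactly $1+\Delta t\lambda_\bl$. One small point also needs care: the definition of $\lambda_\bl$ contains a typo, $\pi_\bk$ in the $z$-term, which should be read as $\pi_\bl$.
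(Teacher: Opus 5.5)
Your proposal is correct and follows the paper's argument, which is the remark just before the lemma: the seven coefficients on the right of \eqref{tm310} are nonnegative and sum to $1+\Delta t\lambda_\bl$. Hence $g^{n+1}_\bl$ is a convex combination of stencil values of $g^n$, and positivity and the maximum principle for both $g$ and $g-1$ follow at once. Your pairing of $c_k^-$ with $g_{i+1}$ and $c_k^+$ with $g_{i-1}$ checks out, and reading the $\pi_\bk$ in $\lambda_\bl$ as $\pi_\bl$ is the right fix.
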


\section{The Particle System of the Sampling Dynamics}\label{sec:App-D}
In this section, we propose two particle models which resemble the dynamics of \eqref{EQ:1st_Ord} and \eqref{EQ:2nd_Ord}. As a consequence, the effective simulation of these particle dynamics also provides a fast sampling algorithm.

Motivated by \cite{BenPoratCarrilloJabin25,Nastassia22}, we consider the empirical measure
$
\mu_N(t):=\frac{1}{N}\sum_{i=1}^N m_i(t)\delta_{\bx_i(t)}.
$ 
Let ${\bf m}_N:=(m_1,\cdots, m_N)$, where $m_i(t=0)= {1}$ with time-invariant total mass  $\sum_{i=1}^Nm_i=N$. Then the dynamics is 
\begin{align}
\begin{cases}
\mathrm{d}\bx_i(t)=\vv(t,\bx_i(t))\mathrm {d}t-\nu\na_\bx\uu(\bx_i(t))\mathrm{d}t+\sqrt{2\nu}\mathrm{d}B_i(t),\\[0.3em]
\displaystyle \mathrm d m_i(t)=\frac{m_i(t)}{N}\sum_{j=1}^N m_j(t)\Bigl(\A(t,\bx_j(t))-\A(t,\bx_i(t))
\Bigr)\mathrm dt,\vphantom{\int}
\\[0.3em]
\A(t,\bx):=\vv(t,\bx)\cdot\na \uu(\bx), \vphantom{\int}
\\[0.3em]
\bx_i(t=0)=\bx_{i;0},\quad m_i(t=0)=m_{i;0}=1,\quad i\in\{1,\cdots,N\}.
\end{cases}\label{1st_Particle}
\end{align}

Next we consider the second-order sampling dynamics \eqref{EQ:2nd_Ord}. Let the empirical measure be 
$
\mu_N:=\frac{1}{N}\sum_{i=1}^N m_i\delta_{(\bx_i(t),\bp_i(t))},
$ 
Let ${\bf m}_N:=(m_1,\cdots, m_N)$,with time-invariant total mass $\sum_{i=1}^Nm_i=N$. Then the dynamics is 
\begin{align}\begin{cases}
&\dx_i(t)=\bv(\bp_i(t))dt{-}\kappa\na_\bx\uu(\bx_i(t))\dt+\sqrt{2\kappa}\mr dB_t^{i},\\
& \mr d  \bp_i(t)=\sqrt{2\nu}\mr d\mathbb{B}_t^i,\\
 &\mr d m_i(t)=\frac{1}{N}\sum_{j=1}^N m_i(t)m_j(t)\Big(\bv( \bp_j(t))\cdot \na {\uu}(\bx_j(t))-\bv(\bp_i(t))\cdot \na {\uu}(\bx_i(t))\Big)\mr dt,\\
&\bx_i(t=0)=\bx_{i;0},\quad \bp_i(t=0)=\bp_{i;0},\quad m_i(t=0)=m_{i;0}.\end{cases}\label{2nd_Particle}
\end{align}
Here, the Brownian motions $\{ B_t^i\}_{i=1}^N, \{\mathbb B_t^i\}_{i=1}^N$ are independent and identically distributed.

\myh{
In the sequel, we focus on deriving the first-order sampling dynamics \eqref{EQ:1st_Ord} from the particle dynamics \eqref{1st_Particle}.

{
We work on a filtered probability space $(\Omega,\mathcal{F},(\mathcal{F}_t)_{t\ge0},\mathbb{P})$ satisfying the usual conditions, and we assume that $(B_i)_{i=1}^N$ are independent $d$-dimensional $(\mathcal{F}_t)$-Brownian motions. The processes $\bx_i(t)$ and $m_i(t)$ are taken to be $(\mathcal{F}_t)$-adapted continuous semimartingales solving the above system.

We first verify that the total weight is preserved.

\begin{lem}[Conservation of total weight]
For the system above, we have
\[
\sum_{i=1}^N m_i(t) \equiv \sum_{i=1}^N m_i(0) = N
\quad\text{for all }t\ge0,\quad \mathbb{P}\text{-a.s.}
\]
\end{lem}

\begin{proof}

First of all, we recall the definition of $m_i$,
\[
\mathrm d m_i(t)
= m_i(t)\frac{1}{N}\sum_{j=1}^Nm_j(t)\Bigl(\A(t,\bx_j(t))-\A(t,\bx_i(t))\Bigr)\,\mathrm dt.
\]
Summing over $i$ we obtain
\begin{align*}
\mathrm d\Big(\sum_{i=1}^N m_i(t)\Big)
&= \frac{1}{N}\sum_{i=1}^N m_i(t)\sum_{j=1}^Nm_j(t)\bigl(\A(t,\bx_j(t))-\A(t,\bx_i(t))\bigr)\,\mathrm dt\\
&= \frac{1}{N}\Big(\sum_{i=1}^N m_i(t)\Big)\Big(\sum_{j=1}^N m_j(t)\A(t,\bx_j(t))\Big)\mathrm dt
   -\frac{1}{N}\Big(\sum_{i=1}^N m_i(t)\A(t,\bx_i(t))\Big)\Big(\sum_{j=1}^N m_j(t)\Big)\mathrm dt\\
&= 0.
\end{align*}
Hence $\sum_{i=1}^N m_i(t)$ is constant in time. Since $m_i(0)=1$ for all $i$, we have $\sum_{i=1}^N m_i(0)=N$, and the claim follows.
\end{proof}
}

We note that the $m_i(t)$ solve an ODE with stochastic input, so we have that 
\begin{align}
    \frac{\mathrm{d}}{\mathrm{d}t} m_i(t) \leq m_i(t)\frac{1}{N}\sum_{j=1}^N{m_j(t)}\, 2\|\A\|_{L^\infty_{t,\bx}}=2 m_i(t)\|\A\|_{L^\infty_{t,\bx}}.
\end{align}
Hence, by Gr\"onwall's inequality, 
\begin{align}
    m_i(t)\leq e^{2\|\A\|_{L^\infty_{t,\bx}}t},\quad \forall t\in\rr_+.\label{m_i_bound}
\end{align}
Next, we consider the coupling between the empirical measure $\mu_N$ and an arbitrary test function $\varphi\in C^\infty(\mathbb{T}^d)$:
\begin{align}
    \mathrm d\int_{\Torus^d} \varphi \,\mu_N(\mathrm d\bx)
    &=\frac{1}{N}\sum_{i=1}^N \mathrm d\bigl[\varphi(\bx_i(t))m_i(t)\bigr].
\end{align}
Applying It\^o formula yields 
\begin{align}
    \mathrm d\int_{\Torus^d} \varphi \,\mu_N(\mathrm d\bx)
    =&\frac{1}{N} \sum_{i=1}^N \Bigl[  \nabla\varphi(\bx_i(t))\cdot(\vv(t,\bx_i(t))-\nu\nabla_\bx\uu(\bx_i(t)))+\nu \Delta_{\bx}\varphi(\bx_i(t))\Bigr]m_i(t)\,\mathrm dt  \\
    &+\frac{\sqrt{2\nu}}{N}  \sum_{i=1}^N m_i(t)\nabla\varphi(\bx_i(t))\cdot\mathrm d B_i(t) \\
    &+\frac{1}{N^2} \sum_{i=1}^N m_i(t)\varphi(\bx_i(t)) \sum_{j=1}^N m_j(t)\Bigl(\A(\bx_j(t))-\A(\bx_i(t))\Bigr)\mathrm dt.
\end{align}
Using the definition of $\mu_N$ and the conservation $\sum_i m_i(t)=N$, we can rewrite this as
\begin{align}
    \mathrm d\int_{\Torus^d} \varphi \,\mu_N(\mathrm d\bx)
    =&\int_{\Torus^d} \nabla\varphi(\bx)\cdot(\vv(t,\bx)-\nu \nabla_x\uu(\bx)) \,\mu_N(\mathrm d\bx)\,\mathrm dt
      +\nu\int_{\Torus^d}\Delta\varphi(\bx)\,\mu_N(\mathrm d\bx) \,\mathrm dt \\
    &+\int_{\Torus^d} \varphi(\bx)
      \biggl(\int_{\Torus^d} \bigl(\A(t,\by)-\A(t,\bx)\bigr)\,\mu_N(\mathrm d\by)\biggr)\mu_N(\mathrm d\bx)\,\mathrm dt\\
    &+ \underbrace{\frac{\sqrt{2\nu}}{N}  \sum_{i=1}^N m_i(t)\nabla\varphi(\bx_i(t))\cdot \mathrm dB_i(t)}_{\text{Martingale term}}.\label{Eq_with_Martingale}
\end{align} 
We can see that, modulo the martingale term, the remaining part of the integral expression is the weak formulation of the partial differential equation \eqref{EQ:1st_Ord}. The convergence of the martingale term to zero is therefore one necessary ingredient in the large-population limit; a rigorous derivation also requires tightness of the empirical measures and identification (and, typically, uniqueness) of the limiting equation.

{
To make the stochastic term precise, we rewrite the martingale contribution as
\begin{equation}\label{eq:MN_def_precise}
    \mathcal{M}_{(N)}(t)
    := \frac{\sqrt{2\nu}}{N}\sum_{i=1}^N \int_0^t m_i(s)\,\nabla\varphi\big(\bx_i(s)\big)\cdot \mathrm dB_i(s).
\end{equation}
For each $i$, the process
\[
s\mapsto m_i(s)\,\nabla\varphi\big(x_i(s)\big)
\]
is $(\mathcal{F}_s)$-adapted and, by the bound \eqref{m_i_bound} and the smoothness of $\varphi$, is square-integrable on any finite interval $\bigl[0,T\bigr]$.  

We now compute the variance using It\^o isometry. The quadratic variation of $\mathcal{M}_{(N)}$ is
\[
\big\langle \mathcal{M}_{(N)}\big\rangle_t 
= \frac{2\nu}{N^2}\sum_{i=1}^N 
   \int_0^t m_i^2(s)\,\big|\nabla\varphi(\bx_i(s))\big|^2\,\mathrm ds.
\]
Taking expectation and using the It\^o isometry and independence of the $B_i$, we obtain
\[
 \mathbb{E}\big[\big\langle\mathcal{M}_{(N)}\big\rangle_t\big]
=\mathbb{E}\big[|\mathcal{M}_{(N)}(t)|^2\big]
= \frac{2\nu}{N^2}\sum_{i=1}^N 
   \mathbb{E}\Big[\int_0^t m_i^2(s)\,\big|\nabla\varphi(\bx_i(s))\big|^2\,\mathrm ds\Big].
\]
Using the uniform bound $\big|\nabla\varphi\big|\le \|\nabla\varphi\|_{L^\infty}$ and \eqref{m_i_bound}, we get
\begin{align*}
\mathbb{E}\big[\langle\mathcal{M}_{(N)}\rangle_t\big]
\le& \frac{2\nu\|\nabla\varphi\|_{L^\infty}^2}{N^2}
     \sum_{i=1}^N \int_0^t \mathbb{E}\big[m_i^2(s)\big]\,\mathrm ds\\
\le &\frac{C\nu\|\nabla\varphi\|_{L^\infty}^2}{N}
     \int_0^t e^{4\|\A\|_{L^\infty_{t,\bx}}s}\,\mathrm ds
        \le \frac{C\nu\|\nabla\varphi\|_{L^\infty}^2}{N}
        t    {e^{4\|\A\|_{L^\infty_{t,\bx}}t}},\label{eq:MN_variance_bound}
\end{align*}
which decays like $1/N$ for every fixed $t<\infty$.

Finally, by the Burkholder--Davis--Gundy inequality for continuous martingales, there exists a constant $C>0$ such that
\begin{align*}
\mathbb{E}\Big[\sup_{0\le s\le t}|\mathcal{M}_{(N)}(s)|^2\Big]
\le &C\,
\mathbb{E}\big[\big\langle\mathcal{M}_{(N)}\big\rangle_t\big]
\le \frac{C\nu\|\nabla\varphi\|_{L^\infty}^2}{N}
      t e^{4\|\A\|_{L^\infty_{t,\bx}}t}, 
\end{align*}
and in particular,
$\displaystyle 
\mathbb{E}\Big[\sup_{0\le s\le t}|\mathcal{M}_{(N)}(s)|^2\Big]
\xrightarrow[N\to\infty]{} 0
\quad\text{for every fixed }t<\infty$.
Thus the martingale fluctuation $\mathcal{M}_{(N)}$ vanishes as $N\to\infty$ in $L^2$, which justifies neglecting this term in the mean-field limit.} Consequently, this calculation supports the formal identification of any sufficiently compact limit of $\mu_N$ with a weak solution of \eqref{EQ:1st_Ord}. A rigorous convergence result additionally requires the tightness and identification steps noted above.}

\section{Enhanced Dissipation for Alternating Shear Flows}\label{sec:ED_alt_sf}
\subsection{General Setup}
In this section, we establish the enhanced-dissipation result in Theorem \ref{thm:lnrED_AltS}. 
 We rephrase the problem \eqref{eq:PS} on $\Torus^{2\dd}$ as follows 
\begin{align}\begin{split}
    &\pa_t \eta+\mathbf{u}\cdot \na_{\bx,\by} \eta=\nu \Delta \eta,\quad
    \eta(t=0,\bx,\by)=\eta_0(\bx,\by).
   \end{split}
\end{align}
Here, $(\bx,\by)\in \Torus_\bx^\dd\times\Torus_\by^\dd$ where $\Torus_\bx=\Torus_\by=\rr/2\pi \mathbb{Z}$. Without loss of generality, we assume that $\overline{\eta_0}=\frac{1}{|\Torus|^{2\dd}}\iint \eta_0\,\mathrm d\bx\dy=0$. Let $\{\mathbf{e}_j^\bx\}_{j=1}^\dd$ be the canonical basis vectors for $\Torus_\bx^\dd$ and $\{\mathbf{e}_{j}^\by\}_{j=1}^\dd$ be the canonical basis vectors for $\Torus^{\dd}_\by$. The time-dependent velocity field is defined as follows:
\begin{align}\label{alt_shear}
   \begin{split} \mathbf{u}(t,\bx,\by)&=\Phi_1(t)\lf[\sum_{j=1}^{\dd}\sin(y_j){\bf e}_j^\bx\rg]+\Phi_2(t)\lf[\sum_{j=1}^{\dd}\sin(x_j){\bf e}_{j}^\by\rg]
    =\Phi_1(t)u_1(\by)+\Phi_2(t)u_2(\bx).\end{split}
\end{align}
Here, $\Phi_1$ and $\Phi_2$ are two smooth temporal cutoff functions. {The cutoff $\Phi_1(t)$ is supported in the open set $\cup_{k\in \mathbb{N}}\delta_*^{-1}\nu^{-\frac12}(6k,3+6k)$. It is identically one for $t\in \delta_*^{-1}\nu^{-\frac12}[1+6k,2+6k]$ and is smooth and monotone in $ \delta_*^{-1}\nu^{-\frac12}[6k,1+6k]$ and $ \delta_*^{-1}\nu^{-\frac12}[2+6k,3+6k]$. Similarly, the temporal cutoff $\Phi_2(t)$ is supported in the open set $\cup_{k\in \mathbb{N}}\delta_*^{-1}\nu^{-\frac12}(3+6k,6(k+1))$ and is equal to $1$ on the interval $\delta_*^{-1}\nu^{-1/2}[4+6k,5+6k]$.} Here, the $\delta_*$ is a constant to be determined in Section \ref{sec:alt_sch}. The Fourier variables are 
\[ \bx=(x_1,x_2,\cdots, x_\dd)\Longrightarrow {\bf k}=(k_1,k_2,\cdots, k_\dd);\quad  \by=(y_1,y_2,\cdots, y_\dd)\Longrightarrow {\bf l}=(\ell_1,\ell_2,\cdots, \ell_\dd).\]

First of all, we develop the enhanced-dissipation estimate for the following system
\begin{align}
    &\pa_t\eta+ u_1(\by)\cdot \na_\bx \eta=\nu \Delta_{\bx,\by} \eta,\quad
    \eta(t=0,\bx,\by)=\eta_0(\bx,\by).
\end{align}
We take the Fourier transform in the $\bx$ variables to obtain 
\begin{align}
    \pa_t \hat{\eta}_{\bk}(t,\by)+\underbrace{\lf(\sum_{j=1}^{\dd}\sin(y_j)ik_{j}\rg)}_{=:i u_1\cdot \bk}\hat{\eta}_{\bk}(t,\by)=-\nu|\bk|^2\hat{\eta}_{\bk}(t,\by)+\nu\Delta_\by\hat{\eta}_{\bk}(t,\by).
\end{align}
Now, if we define 
$    f_\bk(t,\by):=\hat\eta_\bk(t,\by)e^{\nu|\bk|^2t}, $
we have the following equation 
\begin{align}\label{eq:hyp_x}
 \pa_t f_{\bk}(t,\by)+i u_1\cdot \bk f_{\bk}(t,\by)=\nu\Delta_\by f_{\bk}(t,\by).    
\end{align}
Next, we would like to show that if $|\bk|\neq0$, the solution experiences the enhanced dissipation at a rate $\mathcal{O}(\nu^{\f12})$. As long as $|\bk|\neq 0$, we have that $\max\{
|k_j|\}_{j=1}^{\dd}\neq 0$ and we define
\begin{align}\label{defn_k_m}
    m\in\operatorname*{argmax}_{1\leq j\leq\dd}|k_j|,\qquad k_m\neq 0,\quad \ep:=\nu|k_m|^{-1}. 
\end{align}
We consider the following Hypocoercivity functional:
\begin{equation}
\begin{split}
\mathcal{G}[f_\bk]:= &\|f_\bk\|_{2}^2+\al \phi\ep^{1/2}\|\pa_{y_m} f_\bk\|_{2}^2\\
 & \quad +\beta \phi^2 \Re\lan i \operatorname{sign}(k_m)\cos(y_m)f_\bk,\pa_{y_m} f_\bk \ran+\gamma\phi^3\ep^{-1/2} \|\cos(y_m) f_\bk\|_2^2.\label{hyp_fnctnl}
\end{split}
\end{equation}
Here, we consider only the maximizing $k_m$-direction and define $\ep:=\f{\nu}{|k_m|}$ and $\phi:=\min\{1,\sqrt{\nu}|k_m|^{1/2}t\}$. 
\begin{theorem}\label{thm:ED}
For a $C^2$ solution of \eqref{eq:hyp_x}, the Hypocoercivity functional has the following estimate for $0<\nu\leq |k_m|$, 
\begin{align}\|f_\bk(t)\|_{L^2}^2\leq \cG[f_\bk](t)\leq e\|f_{0;\bk}\|_{L^2}^2e^{-\mys{2}\delta_0\nu^\f12|k_m|^\f12 t},\quad \forall t\geq 0.
    \label{Hypo_est_ndeg}
    \end{align}
Here, the constant $\delta_0\in(0,1)$ is universal. 
\end{theorem}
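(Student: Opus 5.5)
The plan is the standard hypocoercivity scheme of Bedrossian--Coti Zelati for shear flows near nondegenerate critical points, applied in the single direction $y_m$ that carries the largest frequency $|k_m|$. I would differentiate each of the four terms of $\cG[f_\bk]$ along \eqref{eq:hyp_x}, choose $\al,\beta,\gamma$ small in the hierarchy $\gamma\ll\beta^2/\al\ll\beta\ll\al\ll1$, and aim for
\[
\frac{d}{dt}\cG\le -\frac{1}{8}\Big(\nu\|\na_\by f_\bk\|_2^2+\al\phi\ep^{1/2}\nu\|\na_\by\pa_{y_m}f_\bk\|_2^2+\beta\phi^2|k_m|\,\|\cos(y_m)f_\bk\|_2^2\Big)+\text{(nonnegative-time-factor terms)}.
\]

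\textbf{Coercivity of $\cG$.} By Cauchy--Schwarz,
\[
\beta\phi^2|\lan\cos f,\pa f\ran|\le \tfrac12\al\phi\ep^{1/2}\|\pa_{y_m}f\|^2+\tfrac{\beta^2}{2\al}\phi^3\ep^{-1/2}\|\cos f\|^2 .
\]
Since $\beta^2/\al<\gamma$, this gives $\cG\simeq \|f\|^2+\al\phi\ep^{1/2}\|\pa_{y_m}f\|^2+\gamma\phi^3\ep^{-1/2}\|\cos f\|^2$. The lower bound $\|f\|^2\le\cG$ should follow from the same inequality, using $\tfrac12$-absorption with the $\|\cos f\|$ term.

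\textbf{Term-by-term derivatives.} The $L^2$ term gives $-2\nu\|\na_\by f\|^2$. The transport $i u_1\cdot\bk$ is skew and drops out. For the $\pa_{y_m}$ term, the commutator $[\pa_{y_m},iu_1\cdot\bk]=ik_m\cos(y_m)$ produces a cross term $2\al\phi\ep^{1/2}|k_m|\,\|\cos f\|\,\|\pa f\|$. The time derivative of $\phi$ contributes $\al\phi'\ep^{1/2}\|\pa f\|^2$; since $\phi'\le \nu^{1/2}|k_m|^{1/2}$ only on $t\le \nu^{-1/2}|k_m|^{-1/2}$, we get $\phi'\ep^{1/2}\le\nu$, which is absorbed by dissipation.

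The $\beta$ cross term is the key one. Its derivative produces the good term $-\beta\phi^2|k_m|\,\|\cos(y_m)f\|^2$, because $\Re\lan i\,\mathrm{sign}(k_m)\cos f,-ik_m\cos f\rangle$ arises from the commutator. The other $y_j$ components of $u_1\cdot\bk$ commute with $\cos(y_m)$ and $\pa_{y_m}$, so they drop out. The remaining pieces are viscous errors $\beta\phi^2\nu(\|\pa f\|\,\|\pa^2 f\|+\|f\|\,\|\pa f\|)$ plus $\phi'$ terms.

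The $\gamma$ term gives $\gamma\phi^3\ep^{-1/2}\nu(\|\sin\cdot\|\,\|\pa f\|+\dots)$ plus $\phi'$ terms; transport again cancels since $\cos(y_m)$ is real. Each error is absorbed by Young's inequality using the hierarchy, together with $\ep\le1$ (this is $\nu\le|k_m|$) and $\phi\le1$. A typical example is $\al\ep^{1/2}|k_m|\phi\|\cos f\|\|\pa f\|\le \tfrac{\beta}{4}\phi^2|k_m|\|\cos f\|^2+\tfrac{\al^2}{\beta}\ep|k_m|\|\pa f\|^2$, where $\ep|k_m|=\nu$ and we need $\al^2/\beta\ll1$.

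\textbf{Spectral gap: the main obstacle.} The critical step is to show that
\[
\nu\|\pa_{y_m}f\|^2+\beta\phi^2|k_m|\,\|\cos(y_m)f\|^2\gtrsim \phi^2\nu^{1/2}|k_m|^{1/2}\|f\|^2 .
\]
Here I would use the standard splitting on the set $\{|\cos y_m|\le\lambda\}$ with $\lambda=\ep^{1/4}$. Nondegeneracy of the critical points of $\sin$ means this set has measure $\lesssim\lambda$ in $y_m$. On it, the one-dimensional estimate $\|g\|^2_{L^2(E)}\le|E|\,\|g\|_{L^\infty_{y_m}}^2\lesssim\lambda(\|g\|\|\pa g\|+\|g\|^2)$, applied fiberwise in the other $y$ variables, controls $f$ by the dissipation. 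This yields $\tfrac{d}{dt}\cG\le-c\,\delta_0\,\phi^2\nu^{1/2}|k_m|^{1/2}\cG$, after using $\phi^2\ge\phi^3\ge\cdots$ to compare the extra terms of $\cG$, which are each controlled by dissipation times appropriate powers.

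\textbf{Conclusion.} Integrating, for $t\ge t_0:=\nu^{-1/2}|k_m|^{-1/2}$ we have $\phi=1$ and exponential decay. On $[0,t_0]$ we have $\cG$ nonincreasing, and $2\delta_0\nu^{1/2}|k_m|^{1/2}t_0\le1$ is where the factor $e$ comes from. Together these give \eqref{Hypo_est_ndeg}, with $\delta_0$ universal because all constants come only from the one-dimensional $\sin$ profile in $y_m$ and never depend on $\dd$.
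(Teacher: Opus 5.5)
Your architecture is the paper's: the same functional $\cG$, term-by-term differentiation along \eqref{eq:hyp_x}, the spectral inequality of Lemma~\ref{lem:spec} to convert dissipation into $\phi^2\nu^{1/2}|k_m|^{1/2}\cG$, and the split at $t_0=\nu^{-1/2}|k_m|^{-1/2}$. Your measure/Agmon proof of the spectral gap can replace the cited argument. Take $\lambda=c\,\ep^{1/4}$ with $c$ small, so that the $\lambda\|f\|^2$ remainder is absorbable also when $\ep$ is close to $1$.

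The spectral gap is not the main difficulty, though. The difficulty is closing the energy estimate, and your sketch fails at the viscous part of the $\beta$-term. After integrating by parts, this part equals
\[
-2\beta\phi^2\nu\,\mathrm{sign}(k_m)\,\Re\lan i\cos(y_m)\na_\by f_\bk,\na_\by\pa_{y_m}f_\bk\ran ,
\]
plus a harmless piece bounded by $\beta\phi^2\nu\|\cos(y_m)f_\bk\|_2\|\pa_{y_m}f_\bk\|_2$. Your error ``$\beta\phi^2\nu\|\pa f\|\|\pa^2 f\|$'' drops the weight, i.e.\ it bounds this term by $\beta\phi^2\nu\|\na_\by f_\bk\|_2\|\na_\by\pa_{y_m}f_\bk\|_2$. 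That cannot be absorbed into $\nu\|\na_\by f_\bk\|_2^2$ and $\al\phi\ep^{1/2}\nu\|\na_\by\pa_{y_m}f_\bk\|_2^2$. Young's inequality would need $\beta\phi^{3/2}\lesssim\al^{1/2}\ep^{1/4}$, which fails for $t\ge t_0$ as $\ep=\nu/|k_m|\to0$; the bound $\ep\le1$ does not help.

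The $\gamma$-term's viscous error $4\gamma\phi^3\ep^{-1/2}\nu\Re\int\cos(y_m)\sin(y_m)\pa_{y_m}f_\bk\overline{f_\bk}\,\dy$ behaves the same way. Without the weight it leaves $\gamma\phi^3\ep^{-1/2}\nu\|\pa_{y_m}f_\bk\|_2^2$, too large by the factor $\ep^{-1/2}$.

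The missing ingredient is the weighted dissipation $-2\gamma\phi^3\ep^{-1/2}\nu\|\cos(y_m)\na_\by f_\bk\|_2^2$ generated by the $\gamma$-term, which is absent from your list of good terms. Keeping the $\cos(y_m)$ weight,
\[
2\beta\phi^2\nu\|\cos(y_m)\na_\by f_\bk\|_2\|\na_\by\pa_{y_m}f_\bk\|_2\le\al\phi\ep^{1/2}\nu\|\na_\by\pa_{y_m}f_\bk\|_2^2+\frac{\beta^2}{\al\gamma}\,\gamma\phi^3\ep^{-1/2}\nu\|\cos(y_m)\na_\by f_\bk\|_2^2 ,
\]
and this is absorbable by the $\al$- and $\gamma$-dissipations exactly when $\beta^2\lesssim\al\gamma$. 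The $\gamma$-error is bounded by $4\gamma\phi^3\ep^{-1/2}\nu\|\cos(y_m)\pa_{y_m}f_\bk\|_2\|f_\bk\|_2$. It is absorbed using the same weighted term, the spectral gap, and $\gamma\ll\beta$.

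So $\beta^2\lesssim\al\gamma$ is forced by the energy estimate itself, not only by coercivity. Your stated hierarchy $\gamma\ll\beta^2/\al$ is backwards, and it contradicts the $\beta^2/\al<\gamma$ you use in your own coercivity step. The consistent ordering is $\beta^2/\al\lesssim\gamma\ll\beta\ll\al$ with $\al^2\ll\beta$. The paper takes $\al=\beta^{1/2}/4$ and $\gamma=4\beta^{3/2}$, so that $\beta^2=\al\gamma$, and then chooses $\beta$ small.

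With this correction the rest of your plan goes through as in the paper: the $\phi'$ terms, the differential inequality $\frac{d}{dt}\cG\le-2\delta_0\nu^{1/2}|k_m|^{1/2}\phi^2\cG$, and the two-regime conclusion producing the factor $e$.
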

As a direct corollary of the above theorem, we have the following estimate.
\begin{cor}\label{cor:ED_full} Consider the  $C^2$-solution to the hypoelliptic passive scalar equation with mean-zero initial data $\lan {f}_0\ran_\bx\equiv 0$, 
\begin{align}
\pa_t {f}+u_1(\by)\cdot\na_\bx  {f}=\nu \de_\by  {f},\quad {f}(t=0)= {f}_0. 
\end{align} 
Then, the following estimate holds for all $0<\nu\leq 1$, 
\begin{align}\| {f}(t)\|_{L_{\bx,\by}^2}^2\leq  e\| {f}_{0}\|_{L_{\bx,\by}^2}^2e^{-\mys{2}\delta_0\nu^\f12 t},\quad \forall t\geq 0.  \label{Hypo_est_ndeg_full}
\end{align}
Here, the constant $\delta_0\in(0,1)$ is universal.
\end{cor}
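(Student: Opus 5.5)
We deduce Corollary \ref{cor:ED_full} from Theorem \ref{thm:ED} by Fourier decomposition in $\bx$. Since $\lan f_0\ran_\bx\equiv0$, the zero mode $\hat f_{\mathbf 0}(0,\by)$ vanishes. The equation only diffuses in $\by$, so after the $\bx$-Fourier transform each mode satisfies
\[
\pa_t\hat f_\bk+iu_1(\by)\cdot\bk\,\hat f_\bk=\nu\de_\by\hat f_\bk .
\]
This is exactly \eqref{eq:hyp_x}, with $f_\bk=\hat f_\bk$ and no $e^{\nu|\bk|^2t}$ factor needed. In particular the zero mode stays identically zero for all time, so only modes with $\bk\neq0$ contribute.

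For each $\bk\neq0$ we have $|k_m|\geq1$, with $k_m$ defined in \eqref{defn_k_m}. Hence $0<\nu\leq1\leq|k_m|$, and the hypothesis of Theorem \ref{thm:ED} holds. The theorem gives
\[
\|\hat f_\bk(t)\|_{L^2_\by}^2\leq e\|\hat f_{0;\bk}\|_{L^2_\by}^2e^{-2\delta_0\nu^{1/2}|k_m|^{1/2}t}\leq e\|\hat f_{0;\bk}\|_{L^2_\by}^2e^{-2\delta_0\nu^{1/2}t},
\]
where the last step uses $|k_m|^{1/2}\geq1$. The constants $e$ and $\delta_0$ do not depend on $\bk$, and this uniformity is the key point.

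Summing over $\bk\neq0$ and applying Plancherel in $\bx$ yields \eqref{Hypo_est_ndeg_full}; the normalization factor $|\Torus|^{\dd}$ appears identically on both sides. The $C^2$ regularity assumed for $f$ makes each $\hat f_\bk$ a $C^2$ solution in $(t,\by)$, so Theorem \ref{thm:ED} applies mode by mode. The summation poses no convergence issue, since all terms are nonnegative and bounded by the corresponding initial data.

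There is no real obstacle. The only points to check are that the zero mode is excluded, that the restriction $\nu\leq|k_m|$ follows from $\nu\leq1$ and $|k_m|\geq1$, and that the constants in Theorem \ref{thm:ED} are uniform in $\bk$ and in the dimension.
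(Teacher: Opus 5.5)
Your proposal is correct and is essentially the paper's argument. Like the paper, you decompose into $\bx$-Fourier modes, note that the zero mode vanishes, apply Theorem~\ref{thm:ED} mode by mode using $|k_m|\geq 1\geq \nu$ and $|k_m|^{1/2}\geq 1$, and sum via Plancherel; your remark that no $e^{\nu|\bk|^2t}$ renormalization is needed, because the equation diffuses only in $\by$, is a correct detail the paper leaves implicit.
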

\begin{proof}
Since the solution preserves the mean-zero constraint, we have that $\displaystyle f(t,\bx,\by)=C\sum_{|\bk|\neq 0} f_{\bk}(t,\by)e^{i\bk\cdot\bx}$. A combination of the estimate \eqref{Hypo_est_ndeg} and the Plancherel equality yields the result\newline
$\displaystyle \|f(t)\|_{L^2_{\bx,\by}}^2=C_\dd\sum_{|\bk|\neq 0}\|f_\bk(t)\|_{L^2_\by}^2\leq C_\dd e\sum_{|\bk|\neq 0}\|f_\bk(0)\|_{L^2_\by}^2e^{-2\delta_0\nu^{\frac 12}t}=e\|f_0\|_{L^2_{\bx,\by}}^2e^{-2\delta_0\nu^{\frac 12}t}$.
\end{proof}

\subsection{Preliminary Lemmas}

The proof relies on a spectral inequality and a comparison lemma. We collect them in this section. 
\begin{lem}\label{lem:spec}
There exists a constant $\mathfrak{C}_{\mr{Spec}}$ such that the following estimate holds for every $j\in\{1,\ldots,\dd\}$, any $k_j\neq 0$, and $\nu\leq|k_j|$: 
    \begin{align}\label{spectral}
       \qquad  \lf(\frac{\nu}{|k_j|}\rg)^{1/2}\|f_\bk\|_{L^2(\Torus^{\dd})}^2\leq \frac{\nu}{|k_j|}\|\pa_{y_j} f_\bk\|_{L^2(\Torus^{\dd})}^2+\mathfrak{C}_{\mr{Spec}}\lf\| \cos({y_j}) f_\bk\rg\|_{L^2(\Torus^{\dd})}^2, \ \  \mathfrak{C}_{\mr{Spec}}=\f{3\pi^2}{2}+\frac{1024}{\pi^2}. 
    \end{align}    
\end{lem}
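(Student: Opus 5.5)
The plan is to reduce \eqref{spectral} to a one-dimensional inequality in the variable $y_j$, with all other coordinates frozen, and then integrate over the remaining $\dd-1$ variables. Set $\ep:=\nu/|k_j|\in(0,1]$. Both sides of \eqref{spectral} are integrals over $\Torus^{\dd}$, and the right-hand side involves only $\pa_{y_j}$ and the weight $\cos(y_j)$. It therefore suffices to prove, for every $g\in H^1(\Torus)$,
\begin{align}
\ep^{1/2}\|g\|_{L^2(\Torus)}^2\leq \ep\|g'\|_{L^2(\Torus)}^2+\mathfrak{C}_{\mr{Spec}}\|\cos(y)g\|_{L^2(\Torus)}^2 .
\end{align}
We apply this to $g=f_\bk(\cdot,y_{-j})$ and integrate in $y_{-j}$ (Fubini). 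The complex-valuedness of $f_\bk$ causes no trouble: we work with $|g|$, or with real and imaginary parts separately.

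For the one-dimensional inequality, split $\Torus$ into a small neighborhood of the zeros $y=\pm\pi/2$ of $\cos$ and its complement. Let $E_r:=\{y:\ |\cos y|<r\}$ with $r:=\ep^{1/4}$; this is the balancing choice. On $\Torus\setminus E_r$ we have $|\cos y|\geq r$, so
\begin{align}
\ep^{1/2}\int_{\Torus\setminus E_r}|g|^2\leq \frac{\ep^{1/2}}{r^2}\int|\cos y\, g|^2=\|\cos(y) g\|_2^2 .
\end{align}
Since $|\cos y|\ge \frac{2}{\pi}|y\mp\pi/2|$ near the zeros, $E_r$ is contained in two intervals $I_\pm$ of length $\ell\lesssim r=\ep^{1/4}$ centered at $\pm\pi/2$. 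On each $I_\pm$, compare $g(y)$ with its value at a point $y'$ of an adjacent interval $J_\pm$ of comparable length $\ell$, chosen so that $|\cos|\gtrsim r$ on $J_\pm$. Writing $g(y)=g(y')+\int_{y'}^{y}g'$, squaring, and averaging over $y'\in J_\pm$ gives
\begin{align}
\int_{I_\pm}|g|^2\leq 2\int_{J_\pm}|g|^2+C\ell^2\int_{I_\pm\cup J_\pm}|g'|^2 .
\end{align}
Multiply by $\ep^{1/2}$. The $J_\pm$ term is again bounded by a constant times $\|\cos(y) g\|_2^2$, exactly as in the first step. The gradient term is $\lesssim \ep^{1/2}\ell^2\|g'\|^2\lesssim \ep\|g'\|^2$, since $\ell^2\sim\ep^{1/2}$.

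Summing the contributions gives the desired inequality with some explicit constant $C_1$ in front of $\ep\|g'\|_2^2$. The stated form requires coefficient exactly $1$ there, so this coefficient has to be tuned. Take $\ell=c\,\ep^{1/4}$ with $c$ small enough that the gradient coefficient $C c^2$ is at most $1$. This worsens the lower bound on $|\cos|$ over $J_\pm$ to about $c\,\ep^{1/4}$, which in turn inflates the $\cos$-weight constant by a factor of order $c^{-2}$.

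The main obstacle is this bookkeeping of explicit constants, carried out to land on $\mathfrak{C}_{\mr{Spec}}=\frac{3\pi^2}{2}+\frac{1024}{\pi^2}$. The two summands suggest the following bookkeeping: one comes from the inequality $|\cos y|\geq\frac{2}{\pi}|y\mp\frac{\pi}{2}|$, and the other ($1024/\pi^2$) comes from the small-scale choice $c$ in the averaging step. I would fix $c$ first so that the gradient coefficient equals $1$, and then simply track the constants. The hypothesis $\nu\le|k_j|$, that is $\ep\le1$, guarantees $\ell\lesssim1$, so the intervals $I_\pm$, $J_\pm$ fit inside $\Torus$ without overlapping.
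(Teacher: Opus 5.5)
Your approach is correct, and it is genuinely different from the paper's. The paper fixes an $\varepsilon$-independent partition of unity $\chi_0+\chi_1+\chi_2$ in $y_j$, with $\varepsilon=\nu/|k_j|$.

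\begin{itemize}
\item Near each zero of $\cos$ it uses the uncertainty principle. Integrating by parts against $\frac12\partial_{y_j}^2(y_j-\frac{\pi}{2})^2=1$ gives $\int|f\chi_1|^2\le 2\int|y_j-\frac{\pi}{2}|\,|f\chi_1|\,|\partial_{y_j}(f\chi_1)|$. It then uses $|y_j-\frac{\pi}{2}|\le\frac{2\pi}{3\sqrt3}|\cos y_j|$ on the support. Young's inequality $\varepsilon^{1/2}XY\le\frac{\varepsilon}{3}X^2+CY^2$ then produces the $\varepsilon^{1/2}$ scaling by itself.
\item Away from the zeros it uses $|\cos y_j|\ge\frac{\sqrt2}{2}$ and $\varepsilon\le1$.
\item The summand $\frac{1024}{\pi^2}$ is the cost of the commutator $f\,\partial_{y_j}\chi_0$. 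It appears when $\|\partial_{y_j}(f(1-\chi_0))\|$ is traded back for $\|\partial_{y_j}f\|$, using $\|\partial_{y_j}\chi_0\|_\infty\le\frac{16}{\pi}$.
\end{itemize}

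You instead put the scale $\varepsilon^{1/4}$ into the geometry and use only the fundamental theorem of calculus plus averaging. This avoids cutoffs and integration by parts. The price is $\varepsilon$-dependent intervals, and $\varepsilon\le 1$ is what lets them fit.

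The one thing to correct is how you treat the constant. The inequality only gets weaker as $\mathfrak{C}_{\mathrm{Spec}}$ grows, so you need your constant to be \emph{at most} $\frac{3\pi^2}{2}+\frac{1024}{\pi^2}$, not equal to it. Your guess about where the two summands come from matches neither your argument nor the paper's.

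Your method in fact does better. Take $a:=\varepsilon^{1/4}/\sqrt2$, so that $2a\le\sqrt2<\frac{\pi}{2}$, and write $s=y-\frac{\pi}{2}$. For $s\in[0,a]$ and $s'\in[a,2a]$,
\[
|g(s)|^2\le 2|g(s')|^2+2(s'-s)\int_0^{2a}|g'|^2 .
\]
Averaging in $s'$ and then integrating in $s$ gives $\int_0^a|g|^2\le 2\int_a^{2a}|g|^2+2a^2\int_0^{2a}|g'|^2$. Treat $[-a,0]$ and the zero $-\frac{\pi}{2}$ the same way.

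\begin{itemize}
\item \emph{Gradient term.} The four ranges of the gradient integrals are disjoint, and $\varepsilon^{1/2}\cdot 2a^2=\varepsilon$. So the gradient coefficient is exactly $1$.
\item \emph{Exterior.} Off $\{|y\mp\frac{\pi}{2}|<a\}$ you have $|\cos y|\ge\sin a\ge\frac{2a}{\pi}$. Hence $\varepsilon^{1/2}|g|^2\le\frac{\pi^2}{2}|\cos y\,g|^2$ there.
\item \emph{Comparison sets.} The sets $\{a\le|y\mp\frac{\pi}{2}|\le 2a\}$ lie in the exterior. With the factor $2$ in front, they contribute at most $\pi^2\|\cos(y)g\|^2$.
\end{itemize}

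Altogether you obtain the constant $\frac{\pi^2}{2}+\pi^2=\frac{3\pi^2}{2}\le\mathfrak{C}_{\mathrm{Spec}}$. Even your cruder one-sided comparison, with $J_\pm$ on one side of $I_\pm$, gives roughly $6\pi^2$, which is still below $\mathfrak{C}_{\mathrm{Spec}}$.
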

\begin{remark}
    This estimate is a consequence of the uncertainty principle in $\rr^{\dd}. $ The key point in the estimate is that the bound only depends on the regularity in the $y_j$-direction. 
\end{remark}
\begin{proof} The proof is standard, we refer the readers to \cite{CobleHe23} for details. \myh{To simplify the notation, we define $\varepsilon:=\nu/|k_j|\leq1$. We can apply a partition of unity $\{\chi_i\}_{i=0}^2$ to decompose the function $f(\by)=f(\by)(\pw\chi0(y_j)+\sum_{i=1}^2\pw\chi i(y_j))=\pw f0(\by)
+\pw f1(\by)+\pw f2(\by)$, where $\{\pw\chi i\}_{i\neq 0}$ are supported in a $\frac{\pi}{3}$-neighborhood of the critical points $y_j=\pm \frac{\pi}{2}$, $\operatorname{dist}(\operatorname{supp}\pw\chi1, \operatorname{supp}\pw\chi2)\geq\frac{\pi}{3}$, and $\pw \chi0$ is supported away from the critical points, i.e., $\operatorname{dist}(\operatorname{supp}\pw\chi0, \{\pm \pi/2\})\geq\frac{\pi}{4}$. Moreover, $\max_{i=0}^2\|\pa_{y_j} \pw\chi i\|_{L^\infty_\by}\leq \frac{16}{\pi}$ and the supports of $\{\pw\chi i\}_{i\neq 0}$ are pairwise disjoint. Now for the $L^2$-estimate of $\pw fi,\, i\neq 0$, we use integration by parts to derive a bound. For example, we consider the case where $i=1$. Since the cutoff function $\pw \chi i$ has compact support in the $y_j$-variable, we can extend the domain to $y_j\in \rr$, 
\begin{align}\n
    \varepsilon^{1/2}\int_{\mathbb{T}^{\dd}}& |\pw f1|^2 \dy= \frac{1}{2}\varepsilon^{1/2}\bigg|\int_{y_j\in \rr} |\pw f1|^2 \pa^2_{y_j}\bigl(y_j-\frac{\pi}{2}\bigr)^2 \dy\bigg| = \varepsilon^{1/2}\bigg|\int_{y_j\in \rr} \pa_{y_j}|\pw f1|^2 \Bigl(y_j-\frac{\pi}{2}\Bigr) \dy\bigg|\\
    \leq&  \frac{4\pi}{3\sqrt{3}}\varepsilon^{1/2}\bigg|\int |\overline{\pw f1}||\pa_{y_j} \pw f1 |  |\cos(y_j)|  \dy\bigg|\leq \frac{1}{3}\varepsilon\|\pa_{y_j} \pw f1\|_{L^2}^2+ \f{4\pi^2}{9}\lf\|\cos(y_j) \pw f1\rg\|_{L^2}^2.\label{spec_nr_crit}
\end{align}
Here, in the last line, we have used that on the $\operatorname{supp}\pw\chi1\subset(\frac{1}{6}\pi,\frac{5}{6}\pi),$ $|\cos(y_j)|\geq \frac{3\sqrt{3}}{2\pi}|y_j-\f\pi2|$. Since the supports of the cutoff functions $\{\chi_i\}_{i\neq 0}$ are disjoint, we have that 
\begin{align}
\varepsilon^{1/2}\int_{\Torus^\dd} |f(1-\pw\chi0)|^2\dy\leq \frac{\varepsilon}{3} \|\pa_{y_j} (f(1-\pw\chi0))\|_{L^2}^2+\frac{4\pi^2}{9}\| \cos(y_j)f(1-\pw\chi0)\|_{L^2}^2.
\end{align}
We further observe that, since $|\cos(y_j)|\geq \frac{\sqrt{2}}{2}$ on the support of $\chi_0$ and $\varepsilon\leq {1}$, 
\begin{align}
    \varepsilon^{\f12}\|f\chi_0\|_{L^2}^2\leq 2\varepsilon^{\f12}\|\cos(y_j)f\chi_0\|_{L^2}^2\leq{2} \|\cos(y_j)f\chi_0\|_{L^2}^2.
\end{align}
Combining the above estimates, we have that 
\begin{align}\n
    \varepsilon^{1/2}\|f\|_{L^2}^2\leq& 2\varepsilon^{1/2}\|f\chi_{0}\|_{L^2}^2+2\varepsilon^{1/2}\|f(1-\chi_0)\|_{L^2}^2\\
    \leq& \frac{2\varepsilon}{3}\|\pa_{y_j}(f(1-\chi_0))\|_{L^2}^2+\lf(2+\frac{8\pi^2}{9}\rg)\lf\||\cos(y_j)| f\rg\|_{L^2}^2\\
\leq&\varepsilon\|\pa_{y_j}f\|_{L^2}^2+\frac32\pi^2\lf\|\cos(y_j) f\rg\|_{L^2}^2+2\varepsilon\|\pa_{y_j}\pw\chi 0\|_\infty^2\|f\|_{L^2(\operatorname{supp}\pa_{y_j}\pw\chi 0)}^2.
\end{align}
We have $\varepsilon:=\nu/|k_j|\leq 1$. Moreover, $\|\pa_{y_j} \pw\chi 0\|_{L^\infty_\by}\leq \frac{16}{\pi}$, and $|\cos(y_j)|\geq \frac{\sqrt{2}}{2}$ on the support of $\chi_0$. Therefore,
\begin{align}
&  \varepsilon^{1/2}\|f\|_{L^2}^2\leq\varepsilon\|\pa_{y_j}f\|_{L^2}^2+\lf(\frac32\pi^2+\frac{1024}{\pi^2}\rg)\lf\|\cos(y_j) f\rg\|_{L^2}^2.
\end{align}
This concludes the proof of the lemma.}
\end{proof}
\begin{lem}\label{lem:eqv}
    Assume the relation
  $
 {\beta^2 \leq \al\gamma.}
$ 
    Then, the following equivalence estimate concerning the functional $\mathcal{G}$ \eqref{hyp_fnctnl} holds
    \begin{align}
        \|f\|_2^2& + \frac{1}{2}\lf(\al\phi\ep^{1/2}\|\pa_{y_m}f\|_2^2 + \gamma\phi^3\ep^{-1/2}\|\cos(y_m)f\|_2^2\rg)\\
        & \leq \mathcal{G}[f] \leq \|f\|_2^2 + \frac{3}{2}\lf(\al\phi\ep^{1/2}\|\pa_{y_m}f\|_2^2 + \gamma\phi^3\ep^{-1/2}\|\cos(y_m)f\|_2^2\rg).\label{equiv_ndg}
    \end{align}
\end{lem}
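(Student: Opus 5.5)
The plan is to isolate the cross term in $\mathcal{G}[f]$ and absorb it into the two diagonal terms with a weighted Cauchy--Schwarz/Young inequality. The hypothesis $\beta^2\leq\al\gamma$ is exactly what makes the resulting constants equal to $\frac12$. Writing
\[
\mathcal{G}[f]=\|f\|_2^2+\al\phi\ep^{1/2}\|\pa_{y_m}f\|_2^2+\gamma\phi^3\ep^{-1/2}\|\cos(y_m)f\|_2^2+\beta\phi^2\Re\lan i\operatorname{sign}(k_m)\cos(y_m)f,\pa_{y_m}f\ran,
\]
both inequalities in \eqref{equiv_ndg} follow once we show
\[
\Bigl|\beta\phi^2\Re\lan i\operatorname{sign}(k_m)\cos(y_m)f,\pa_{y_m}f\ran\Bigr|\leq\frac12\Bigl(\al\phi\ep^{1/2}\|\pa_{y_m}f\|_2^2+\gamma\phi^3\ep^{-1/2}\|\cos(y_m)f\|_2^2\Bigr).
\]

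To prove this, we use $|\operatorname{sign}(k_m)|\leq1$ and Cauchy--Schwarz to bound the left side by $|\beta|\phi^2\|\cos(y_m)f\|_2\|\pa_{y_m}f\|_2$. We split the weights as
\[
\phi^2=\bigl(\phi^{1/2}\ep^{1/4}\bigr)\bigl(\phi^{3/2}\ep^{-1/4}\bigr),
\]
which is legitimate since $\phi\geq0$ and $\ep>0$. Set $a=(\al\phi\ep^{1/2})^{1/2}\|\pa_{y_m}f\|_2$ and $b=(\gamma\phi^3\ep^{-1/2})^{1/2}\|\cos(y_m)f\|_2$. Then
\[
|\beta|\phi^2\|\cos(y_m)f\|_2\|\pa_{y_m}f\|_2=\frac{|\beta|}{\sqrt{\al\gamma}}\,ab\leq ab\leq\frac12(a^2+b^2),
\]
using $\beta^2\leq\al\gamma$. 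This is exactly the claimed bound. Subtracting it from, or adding it to, the diagonal terms gives the lower and upper bounds in \eqref{equiv_ndg}.

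There is no real obstacle; the only care needed is implicit positivity of the parameters. We take $\al,\gamma>0$, as they will be chosen later, so the square roots make sense. The degenerate case $\phi=0$ (at $t=0$) is trivial, since then $\mathcal{G}[f]=\|f\|_2^2$ and both sides of \eqref{equiv_ndg} coincide. We do not expect any further difficulty.
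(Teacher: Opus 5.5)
Your proof is correct and follows essentially the same route as the paper's argument. The paper also applies Cauchy--Schwarz to the cross term and then a weighted Young inequality, in the form $\beta\phi^2\|\cos(y_m)f\|_2\|\pa_{y_m}f\|_2\leq\frac{\al}{2}\phi\ep^{1/2}\|\pa_{y_m}f\|_2^2+\frac{\beta^2}{2\al}\phi^3\ep^{-1/2}\|\cos(y_m)f\|_2^2$, and uses $\beta^2\leq\al\gamma$ to get $\frac{\beta^2}{2\al}\leq\frac{\gamma}{2}$; this is your normalization by $\sqrt{\al\gamma}$ written with different bookkeeping.
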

\begin{proof}The proof is standard, we refer the readers to \cite[Lemma A.1]{GuHe24} for details.
\myh{We recall the definition of $\mathcal{G}$ \eqref{hyp_fnctnl}, and estimate $\mathcal{G}[f]$ using H\"older's inequality and Young's inequality,
    \begin{align}
        \mathcal{G}[f] \leq& \|f\|_2^2 + \al\phi\ep^{1/2}\|\pa_{y_m}f\|_2^2 + \beta\phi^2\|\cos(y_m)f\|_2\|\pa_{y_m}f\|_2 + \gamma\phi^3\ep^{-1/2}\|\cos(y_m)f\|_2^2\\
        \leq& \|f\|_2^2 + \frac{3\al}{2}\phi\ep^{1/2}\|\pa_{y_m}f\|_2^2 + \lf(\gamma + \frac{\beta^2}{2\al}\rg)\phi^3\ep^{-1/2}\|\cos(y_m)f\|_2^2.
    \end{align}
    Similarly, we have the lower bound:
    \begin{align}
        \mathcal{G}[f] \geq \|f\|_2^2 + \frac{\al}{2}\phi\ep^{1/2}\|\pa_{y_m}f\|_2^2 + \lf(\gamma - \frac{\beta^2}{2\al}\rg)\phi^3\ep^{-1/2}\|\cos(y_m)f\|_2^2.
    \end{align}
    Since \eqref{ndeg_bnd_req} implies that $\frac{\beta^2}{2\al} \leq \frac{\gamma}{2}$, we obtain that
    \begin{align}
        \|f\|_2^2& + \frac{1}{2}\al\phi\ep^{1/2}\|\pa_{y_m}f\|_2^2 + \frac{1}{2}\gamma\phi^3\ep^{-1/2}\|\cos(y_m)f\|_2^2\\ & \leq \mathcal{G}[f] \leq \|f\|_2^2 + \frac{3}{2}\al\phi\ep^{1/2}\|\pa_{y_m}f\|_2^2 + \frac{3}{2}\gamma\phi^3\ep^{-1/2}\|\cos(y_m)f\|_2^2.
    \end{align}
    This concludes the proof of the lemma.}
\end{proof}

\subsection{Energy Estimates}
By taking the time derivative of the hypocoercivity functional, \eqref{hyp_fnctnl}, we end up with the following decomposition:
\begin{align}\begin{split}
    \frac{d}{dt}\mathcal{G}[f_\bk(t)] =& \frac{d}{dt}\|f_\bk\|_2^2 + \al \ep^{1/2}\frac{d}{dt}\lf(\phi\|\pa_{y_m}f_\bk\|_2^2\rg) + \beta\frac{d}{dt}\lf(\phi^2\Re\langle i\cos(y_m)f_\bk,\pa_{y_m}f_\bk\rangle\rg) \\
    &+ \gamma\ep^{-1/2}\frac{d}{dt}\lf(\phi^3\|\cos(y_m)f_\bk\|_2^2\rg)
    =: T_{L^2} + T_\al+ T_\beta + T_\gamma.
\end{split}\label{ndeg_T_albe_term}
\end{align}
Standard energy estimates yield that 
\begin{align}\label{T_L2}
    T_{L^2}=-2\nu\|\na_\by f_\bk\|_{L^2}^2. 
\end{align}
The estimates for the $T_\al,\ T_\beta$, and $T_\gamma$ terms are collected in the following technical lemma.
\begin{lem}\label{lem:nondegenerate}
 For any constant $B>0$, the following estimates hold:
 \begin{align}
            T_\al \leq &\al\nu\mathbbm{1}_{t\leq (\nu|k_m|)^{-1/2}}\|\pa_{y_m}f\|_2^2 - 2\al\phi \nu^{\f32}|k_m|^{-\f12}\|\na_\by\pa_{y_m}f\|_2^2+ \frac{\beta\phi^2}{B}|k_m|\|\cos(y_m)f_\bk\|_{L^2}^2+\frac{B\al^2}{\beta}\nu\|\pa_{y_m}f\|_{L^2}^2,
 \\
        T_\beta  \leq& 2\beta \nu \|\na_{\by}f_\bk\|_2^2+ \frac{7\al}{4}\phi\ep^{1/2}\nu\|\na_\by\pa_{y_m}f_\bk\|_2^2 + \lf(\frac{3\beta^2}{4\al\gamma}\rg) \gamma\phi^3\ep^{-1/2}\nu \|\cos(y_m)\na_\by f\|_2^2\\
        &+\frac{\beta^2}{\al}\phi^3 \nu^\f12|k_m|^{\f12}\|f_\bk\|_{L^2}^2- \frac{3}{4}\beta|k_m|\phi^2\|\cos(y_m)f_\bk\|_{L^2}^2,\\
           T_\gamma\leq& 
           {3\gamma\phi^2\mathbbm{1}_{t\leq ({\nu}|k_m|)^{-\f12}}|k_m|\|\cos(y_m)f_\bk\|_2^2}+ {4\gamma\phi^3\nu^{\f12}{|k_m|^\f12}\|f_\bk\|_2^2 } - \gamma\phi^3\nu^{\f12}|k_m|^{\f12}\|\cos(y_m) \na_\by f_\bk\|_2^2.
 \end{align}
\end{lem}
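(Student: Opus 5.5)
The plan is to differentiate each weighted piece of $\mathcal G$ directly, using the equation \eqref{eq:hyp_x} $\pa_t f_\bk=-iu_1\cdot\bk f_\bk+\nu\de_\by f_\bk$ together with $\phi'(t)=\nu^{1/2}|k_m|^{1/2}\mathbbm 1_{t\le(\nu|k_m|)^{-1/2}}$ and $0\le\phi\le1$. The identity that drives everything is the commutator $[\pa_{y_m},iu_1\cdot\bk]=ik_m\cos(y_m)$, which holds because only the $m$-th summand of $u_1\cdot\bk=\sum_j k_j\sin(y_j)$ depends on $y_m$. I will also use the bookkeeping relations $\ep^{1/2}\phi'=\nu\,\mathbbm 1$, $\ep|k_m|=\nu$, $\ep^{-1/2}\nu=\nu^{1/2}|k_m|^{1/2}$ and $\ep^{1/2}|k_m|=\nu^{1/2}|k_m|^{1/2}$, which turn every coefficient into one of the forms displayed in the lemma.

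\emph{The term $T_\al$.} The derivative of the weight gives $\al\ep^{1/2}\phi'\|\pa_{y_m}f\|_2^2=\al\nu\mathbbm 1\|\pa_{y_m}f\|_2^2$. Differentiating $\|\pa_{y_m}f\|_2^2$ gives $-2\nu\|\na_\by\pa_{y_m}f\|_2^2$ from diffusion. In the transport contribution, $\Re\lan iu_1\cdot\bk\,\pa_{y_m}f,\pa_{y_m}f\ran=0$, so only the commutator term $-2\Re\lan ik_m\cos(y_m)f,\pa_{y_m}f\ran$ survives. After multiplying by $\al\phi\ep^{1/2}$, I bound this by $2\al\phi\ep^{1/2}|k_m|\,\|\cos f\|_2\|\pa_{y_m}f\|_2$ and apply Young with parameter $B$. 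This yields $\frac{\beta\phi^2}{B}|k_m|\|\cos f\|^2+\frac{B\al^2}{\beta}\ep|k_m|\|\pa_{y_m}f\|^2$, where I use $\phi^0\le1$ and $\ep|k_m|=\nu$.

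\emph{The term $T_\gamma$.} The weight derivative gives $3\gamma\phi^2\phi'\ep^{-1/2}\|\cos f\|^2=3\gamma\phi^2|k_m|\mathbbm 1\|\cos f\|^2$. The transport term vanishes, since $|\cos|^2$ is real and $iu_1\cdot\bk$ is purely imaginary. The diffusion term equals $-2\nu\|\cos\na_\by f\|^2-2\nu\Re\lan \pa_{y_m}(\cos^2)\,\pa_{y_m}f,f\ran$. For the cross term I will rewrite $\Re\lan g\,\pa f,f\ran=-\tfrac12\lan \pa g\,f,f\ran$, which gives a bound $2\nu\|f\|^2$ because $|\pa^2_{y_m}\cos^2|\le2$. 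I will keep one half of the good term (the lemma only needs coefficient $1$). Multiplying by $\gamma\phi^3\ep^{-1/2}$ produces the stated $4\gamma\phi^3\nu^{1/2}|k_m|^{1/2}\|f\|^2$ and the negative term, with room to spare.

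\emph{The term $T_\beta$, which I expect to be the main obstacle.} Let $X:=\operatorname{sign}(k_m)\Re\lan i\cos f,\pa_{y_m}f\ran$. I expect three contributions.
\begin{itemize}
\item[(i)] The weight part $2\beta\phi\phi'X$ is bounded by $2\beta\phi\,\nu^{1/2}|k_m|^{1/2}\|f\|\|\pa_{y_m}f\|$ on $\{t\le(\nu|k_m|)^{-1/2}\}$. By Young this splits into a piece $\lesssim\beta\nu\|\pa f\|^2$ (using $\phi\le \nu^{1/2}|k_m|^{1/2}t\le1$) and a piece $\frac{\beta^2}{\al}\phi^3\nu^{1/2}|k_m|^{1/2}\|f\|^2$. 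If the powers of $\phi$ do not match, I will instead pair $\|\cos f\|$ with $\|\pa f\|$ and absorb into $-\frac34\beta|k_m|\phi^2\|\cos f\|^2$.
\item[(ii)] The transport part: the symmetric pieces $iu_1\cdot\bk$ cancel between the two slots of the inner product. The commutator $ik_m\cos$ then produces exactly $-\beta\phi^2|k_m|\|\cos f\|^2$, which is the source of the crucial negative term. The loss of $\tfrac14$ is reserved for absorbing (i).
\item[(iii)] The diffusion part: integrating $\nu\de_\by$ by parts in both slots leaves the commutator terms $\nu\lan\sin(y_m)\pa_{y_j}f,\pa_{y_j}f\ran$-type, bounded by $2\beta\nu\|\na_\by f\|^2$ times $\phi^2\le1$. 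It also leaves a cross term $2\nu\phi^2\beta\|\cos\na_\by f\|\,\|\na_\by\pa_{y_m}f\|$. I split this by Young as $\frac{7\al}{4}\phi\ep^{1/2}\nu\|\na\pa f\|^2+\frac{4\beta^2}{7\al}\phi^3\ep^{-1/2}\nu\|\cos\na f\|^2$, using $\nu^2=\ep^{1/2}\nu\cdot\ep^{-1/2}\nu$. Since $\frac{4}{7}\le\frac34$, this matches the stated bound.
\end{itemize}
The delicate point throughout is the careful matching of the powers of $\phi$ and $\ep$ against the weights; I will keep constants explicit so that the later choice $\beta^2\le\al\gamma$ with $\al\ll\beta\ll\gamma$ closes.
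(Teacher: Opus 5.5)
Your $T_\al$ computation is the paper's. Your $T_\gamma$ argument is valid, and integrating the cross term by parts once more, $-2\nu\Re\lan\pa_{y_m}(\cos^2(y_m))\pa_{y_m}f_\bk,f_\bk\ran=\nu\lan\pa_{y_m}^2(\cos^2(y_m))f_\bk,f_\bk\ran\le2\nu\|f_\bk\|_2^2$, is slightly sharper than the paper's Young split.

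The gap is in step (iii) for $T_\beta$. If you carry out the integrations by parts, the diffusion contribution to $\beta\phi^2\frac{d}{dt}X$ is exactly
\begin{align*}
-2\beta\phi^2\nu\operatorname{sign}(k_m)\Re\int i\cos(y_m)\na_\by f_\bk\cdot\overline{\na_\by\pa_{y_m}f_\bk}\,\dy+\beta\phi^2\nu\operatorname{sign}(k_m)\Re\int i\sin(y_m)f_\bk\,\overline{\pa_{y_m}^2f_\bk}\,\dy .
\end{align*}
The terms $\nu\Re\lan i\sin(y_m)\pa_{y_j}f_\bk,\pa_{y_j}f_\bk\ran$ you anticipate vanish identically, since each is $i$ times a real integrand.

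What survives next to the cross term contains the undifferentiated $f_\bk$; one more integration by parts turns it into $-\nu\beta\phi^2X$. It cannot be bounded by $C\nu\|\na_\by f_\bk\|_2^2$. Nothing forces $f_\bk$ to have zero $\by$-mean, so there is no Poincar\'e inequality. Concretely, $f=1+i\epsilon\sin(y_m)$ gives $|X|\sim\epsilon$ while $\|\na_\by f\|_2^2\sim\epsilon^2$.

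This term is exactly where the statement's $\frac{\beta^2}{\al}\phi^3\nu^{1/2}|k_m|^{1/2}\|f_\bk\|_2^2$ comes from:
\[
\beta\phi^2\nu\|f_\bk\|_2\|\na_\by\pa_{y_m}f_\bk\|_2\le\tfrac{\al}{4}\phi\ep^{1/2}\nu\|\na_\by\pa_{y_m}f_\bk\|_2^2+\tfrac{\beta^2}{\al}\phi^3\ep^{-1/2}\nu\|f_\bk\|_2^2 .
\]
So you cannot spend all of $\frac{7\al}{4}$ on the cross term. Spending $\frac{3\al}{2}$ there gives $\frac{2\beta^2}{3\al}\le\frac{3\beta^2}{4\al}$ on $\|\cos(y_m)\na_\by f\|_2^2$ and leaves $\frac{\al}{4}$ for the lower-order term. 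The paper distributes the same budget as $\al$ for the first slot and $\frac{3\al}{4}$ for the second.

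The same misattribution shows up in (i): the $\|f_\bk\|_2^2$ term does not come from the weight derivative. Your first option cannot work for any fixed $C$. Splitting $2\beta\phi(\nu|k_m|)^{1/2}\|f_\bk\|_2\|\pa_{y_m}f_\bk\|_2$ into $C\beta\nu\|\pa_{y_m}f_\bk\|_2^2+\frac{\beta^2}{\al}\phi^3(\nu|k_m|)^{1/2}\|f_\bk\|_2^2$ would require $\sqrt{C\beta/\al}\,\phi^{1/2}\ep^{1/4}\ge1$, which fails both as $t\to0$ and as $\ep\to0$.

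Your fallback is what the paper does:
\[
2\beta\phi\phi'\|\cos(y_m)f_\bk\|_2\|\pa_{y_m}f_\bk\|_2\le\tfrac{\beta}{4}\phi^2|k_m|\mathbbm{1}_{t\le(\nu|k_m|)^{-1/2}}\|\cos(y_m)f_\bk\|_2^2+4\beta\nu\|\pa_{y_m}f_\bk\|_2^2 .
\]
Combined with the exact $-\beta\phi^2|k_m|\|\cos(y_m)f_\bk\|_2^2$ from (ii), this produces the $-\frac34$. Note that this Young step yields $4\beta\nu$, not the $2\beta\nu$ written in the statement, so do not expect to hit that constant exactly. The discrepancy is harmless in the proof of Theorem~\ref{thm:ED}, where $\beta$ is taken small.
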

\begin{proof}The proof of these lemmas are similar to the arguments in \cite[(A.11)--(A.15)]{GuHe24}. Hence we omit them for the sake of brevity. 
\end{proof}
These estimates allow us to prove Theorem \ref{thm:ED}.
\begin{proof}[Proof of Theorem~\ref{thm:ED}]
    If $T\leq 2\nu^{-1/2}{|k_m|^{-1/2}}$, then a standard $L^2$-energy estimate yields \eqref{Hypo_est_ndeg}. Hence, we assume $T>2\nu^{-1/2}{|k_m|^{-1/2}}$ without loss of generality. We distinguish between two time intervals, i.e., 
  $
        \mathcal{I}_1=[0, \nu^{-1/2}{|k_m|^{-1/2}}], \, \mathcal{I}_2=[\nu^{-1/2}{|k_m|^{-1/2}},T].
$ 
    We organize the proof into three steps. In step \# 1, we choose the $\al, \beta,\gamma$ parameters and derive the energy dissipation relation. In step \# 2, we estimate the functional $\cG$ in the time interval $\mathcal{I}_1$. In step \# 3, we estimate the functional $ \cG$ in the time interval $\mathcal{I}_2$ and conclude the proof.

    \noindent {\bf Step \# 1: Energy bounds.}       
    Combining the estimate \eqref{T_L2} and Lemma \ref{lem:nondegenerate}, we pick $B=4$ and use the fact that $\phi^3\leq \phi^2$ to obtain that
    \begin{align}\begin{split}
    \frac{d}{dt}\mathcal{G}[f(t)]    
\leq&  - \lf(2 -\al- \frac{4\al^2}{\beta}- 2\beta \rg)
{\nu\|\na_\by f_\bk\|_2^2}
- \f\al4\phi\nu^{\f32}|k_m|^{-\f12}\|\na_\by\pa_{y_m}f\|_2^2 \\
&- \lf(\frac{1 }{2} -3\frac{\gamma}{\beta}\rg)\beta \phi^2
{{|k_m|}\|\cos(y_m)f_\bk\|_{L^2}^2 }+\lf( \frac{\beta}{\al}+ 4 \f{\gamma}{\beta}\rg)\beta\nu^{1/2}\phi^3|k_m|^\f12\|f_\bk\|_2^2\\
        &- \lf(1 - \frac{3\beta^2}{4\al\gamma}\rg)\gamma\phi^3\nu^{\f12}|k_m|^\f12\|\cos(y_m)\na_\by f\|_2^2.
        \end{split}  
    \end{align}
    We choose $\al$, $\gamma$ in terms of $\beta$ as follows:  $
        \al = \frac{\beta^{1/2}}{4}, \, \gamma = 4\beta^{3/2} .$ 
Since we assume that $\nu\leq 1$, we can invoke the spectral inequality \eqref{spectral} to obtain that 
      \begin{align*}
    \frac{d}{dt}\cG[f(t)]\leq &  - \Bigl(1 -\sqrt{\beta}- 2\beta -20\beta^{\f32}\Bigr)\nu\|\na_\by f\|_2^2      - \lf(\frac{1}{2} -12\sqrt{\beta}-\mathfrak{C}_{\text{Spec}} 20\sqrt{\beta}\rg)\beta |k_m|\phi^2\|\cos(y_m)f_\bk\|_2^2 \\
        &  - \frac{\gamma}{4}\phi^3\nu^{1/2}|k_m|^\f12\|\cos(y_m)\na_{\by}f_\bk\|_2^2.
    \end{align*}
    Hence, we can choose $
    \beta=\beta(\mathfrak{C}_{\mathrm{Spec}})<1$, 
sufficiently small that 
    \begin{equation}  \begin{split} \ \  &\frac{d}{dt}\mathcal{G}[f(t)]\leq- \frac{1}{2}\nu\|\na_\by f_\bk\|_2^2-\frac{\beta}{4}\phi^2|k_m|\|\cos(y_m)f_\bk\|_2^2 \\
   & \ \ \ \ \leq-\frac{\beta\phi^2}{
    {4}\mathfrak{C}_{\mathrm{Spec}}}\nu^{1/2}|k_m|^{1/2}\|f_\bk\|_2^2-\frac{1}{4}\nu^{1/2}|k_m|^{1/2}\ep^{1/2}\phi\|\pa_{y_m} f_\bk\|_2^2-\frac{\beta}{8}\nu^\f12|k_m|^{\f12}\phi^2\ep^{-\f12}\|\cos(y_m)f_\bk\|_2^2\\
    &\ \ \ \ \leq -2\delta_0(\beta,\mathfrak{C}_{\text{Spec}}^{-1})\nu^{1/2}|k_m|^{1/2}\phi^2\mathcal{G}[f],\quad0<\delta_0(\beta,\mathfrak{C}_{\text{Spec}}^{-1})\leq\frac{1}{8}.\end{split}  \label{dfn_del_0} 
    \end{equation} Here, in the last line, we have invoked the spectral inequality \eqref{spectral} and the comparison lemma \ref{lem:eqv}. Finally, we observe that the parameter $\delta_0$ is universal.
    
    \noindent
    {\bf Step \# 2: Initial time layer estimate.}
    Thanks to the energy dissipation relation \eqref{dfn_del_0}, we obtain that
  $
        \cG[f_\bk(t)]\leq \|f_{0;\bk}\|_{L^2}^2\leq \sqrt{e}\|f_{0;\bk}\|_{L^2}^2e^{-2\delta_0 \nu^{1/2}|k_m|^{1/2}t},\,\forall t\in[0, \nu^{-1/2}|k_m|^{-1/2}].
$
    
    \noindent
    {\bf Step \# 3: Long-time estimate.}
    Assume $t\geq \nu^{-1/2}|k_m|^{-1/2}$. Thanks to the energy dissipation relation \eqref{dfn_del_0}, we obtain    $
        \frac{d}{dt}\cG[f]\leq -2\delta_0\nu^{1/2}|k_m|^{1/2}\cG[f].
 $ Hence, we obtain that 
    \begin{align}
        \cG[f(t)] \leq& \cG[f(\nu^{-1/2}|k_m|^{-1/2})]e^{-2\delta_0\nu^{1/2} {|k_m|^{1/2}}\lf(t-\nu^{-1/2} {|k_m|^{-1/2}}\rg)}
        \leq \sqrt{e}\,\cG[f(\nu^{-1/2}|k_m|^{-1/2})] e^{-2\delta_0\nu^{1/2} {|k_m|^{1/2}}t}. 
    \end{align}
    Now, the results from Steps 2 and 3 yield exponential decay with a universal coefficient. This concludes the proof of  \eqref{Hypo_est_ndeg}.
\end{proof}
\myh{We conclude the section by providing the details of the proofs of Lemmas~\ref{lem:nondegenerate al}, \ref{lem:nondegenerate beta}, and \ref{lem:nondegenerate gamma}. 
\begin{proof}[Proof of Lemma~\ref{lem:nondegenerate al}] We recall the definition of $T_\al$~\eqref{ndeg_T_albe_term}. Using the equation~\eqref{eq:hyp_x} and integrating by parts, we obtain
    \begin{align}
         T_\al
         =& \al\phi'\ep^{1/2}\|\pa_{y_m}f\|_2^2 - 2\al\phi\ep^{1/2}\lf( \nu\|\na_\by\pa_{y_m}f\|_2^2 + \Re\int ik_m\cos(y_m)f\overline{\pa_{y_m}f}dy\rg)\\
         \leq&\al\nu\mathbbm{1}_{t\leq (\nu|k_m|)^{-\f12}}\|\pa_{y_m}f\|_2^2 - 2\al\phi\ep^{1/2} \nu\|\na_\by\pa_{y_m}f\|_2^2 + \frac{\beta\phi^2}{B}|k_m|\|\cos(y_m)f_\bk\|_{L^2}^2+\frac{B\al^2}{\beta}\nu\|\pa_{y_m}f\|_{L^2}^2.
    \end{align}
  This is \eqref{nondegenerate al estimate}.
\end{proof}
\begin{proof}[Proof of Lemma~\ref{lem:nondegenerate beta}]
    The estimate for the $T_\beta$ term in \eqref{ndeg_T_albe_term} is technical. Hence, we further decompose it into three terms and estimate them one by one:
    \begin{align}\label{T_beta123}\begin{split}
        T_\beta =& 2\beta\phi\phi'\operatorname{sign}(k_m)\Re\langle i\cos(y_m)f_\bk,\pa_{y_m}f_\bk\rangle  + \beta\phi^2\operatorname{sign}(k_m)\Re\int i\cos(y_m)\pa_tf_\bk\overline{\pa_{y_m}f_\bk}\dy \\
        &+ \beta\phi^2\operatorname{sign}(k_m)\Re\int i\cos(y_m)f_\bk\overline{\pa_{y_m t}f_\bk}\dy\\
        =:& T_{\beta;1} + T_{\beta;2} + T_{\beta;3 }.\end{split}
    \end{align}
    To begin with, we have the following bound for $T_{\beta;1}$ using H\"older and Young's inequalities
    \begin{align*}
        T_{\beta;1} \leq& 2\beta\phi\phi'\|\cos(y_m)f_\bk\|_2\|\pa_{y_m}f\|_2 
        \leq \frac{\beta\phi^2|k_m|}{4}\mathbbm{1}_{t\leq (\nu|k_m|)^{-1/2}}\|\cos(y_m)f_\bk\|_2^2 +  2\beta \nu \mathbbm{1}_{t\leq(\nu|k_m|)^{-1/2}}\|\pa_{y_m}f\|_2^2\\
         \leq &\frac{\beta\phi^2}{4}\mathbbm{1}_{t\leq (\nu|k_m|)^{-1/2}}{|k_m|}\|\cos(y_m)f_\bk\|_2^2 +  2\beta \nu \|\na_{\by}f\|_2^2.
    \end{align*}
    
    We compute the term $T_{\beta;2}$:
    \begin{align*}
        T_{\beta;2} 
        \leq& \beta\phi^2\nu\||k_m|^{\f14}\cos(y_m)\na_\by f\|_2\||k_m|^{-\f14}\na_\by\pa_{y_m}f\|_2 +\beta\phi^2 \sum_{l=1}^{\dd}\operatorname{sign}(k_m)\Re\int \cos(y_m)\sin(y_l)k_lf_\bk\overline{\pa_{y_m}f_\bk}\dy\\
        \leq& \al\phi\ep^{1/2}\nu \|\na_\by\pa_{y_m}f\|_2^2 + \lf(\frac{\beta^2}{4\al\gamma}\rg) \gamma\phi^3\ep^{-1/2}\nu\|\cos(y_m)\na_\by f_\bk\|_2^2\\
        &
        {+\beta\phi^2 \sum_{l=1}^{\dd}\operatorname{sign}(k_m)k_l\Re\int \cos(y_m)\sin(y_l)f_\bk\overline{\pa_{y_m}f_\bk}\dy}.
    \end{align*}
    Finally, we estimate the term $T_{\beta;3}$ in~\eqref{T_beta123}
    \begin{align}
    \begin{split}
        T_{\beta;3} 
        \leq&\frac{3\al}{4}\phi\ep^{1/2}\nu\|\na_\by \pa_{y_m}f_\bk\|_{L^2}^2+\frac{\beta^2}{\al}\phi^3 \ep^{-1/2}\nu\|\sin(y_m)\|_{L^\infty}^2\|f_\bk\|_{L^2}^2\\
        &+\frac{\beta^2}{2\al\gamma}\gamma\nu^{\f12}|k_m|^{\f12}\phi^3\|\cos(y_m)\na_\by f_\bk\|_{L^2}^2-\beta\phi^2|k_m|\|\cos(y_m)f_\bk\|_{L^2}^2\\
        &
        { -\beta\phi^2\operatorname{sign}(k_m)\sum_{l=1}^{\dd}k_l \Re\int \cos(y_m)\sin(y_l)f_\bk\overline{\pa_{y_m}f_\bk}\dy}.\end{split}
    \end{align}
    We observe that the last terms of $T_{\beta;2}$ and $T_{\beta;3}$ cancel each other. Hence, 
    \begin{align}\begin{split}
        T_\beta \leq& 2\beta \nu \|\na_{\by}f_\bk\|_2^2+ \al\phi\ep^{1/2}\nu\|\na_\by\pa_{y_m}f\|_2^2 + \lf(\frac{\beta^2}{4\al\gamma}\rg) \gamma\phi^3\nu^{1/2}|k_m|^\f12 \|\cos(y_m)\na_\by f\|_2^2\\
        &+\frac{3\al}{4}\phi\ep^{1/2}\nu\|\na_\by \pa_{y_m}f_\bk\|_{L^2}^2+\frac{\beta^2}{\al}\phi^3 \nu^\f12|k_m|^{\f12}\|f_\bk\|_{L^2}^2\\
        &+\frac{\beta^2}{2\al\gamma}\gamma\nu^{\f12}|k_m|^\f12\phi^3\|\cos(y_m)\na_\by f_\bk\|_{L^2}^2- \frac{3}{4}\beta\phi^2|k_m|\|\cos(y_m)f_\bk\|_{L^2}^2.
    \end{split}
    \end{align}
\end{proof}
\begin{proof}[Proof of Lemma~\ref{lem:nondegenerate gamma}]
   By substituting the equation~\eqref{eq:hyp_x} and integrating by parts, we obtain
    \begin{align}\begin{split}
        T_\gamma =& 3\gamma\phi^2\phi'\ep^{-1/2}\|\cos(y_m)f_\bk\|_2^2 + 2\gamma\phi^3\ep^{-1/2}\Re\int\cos^2(y_m)\pa_t f_\bk\overline{f_\bk}\dy\\
        \leq& 3\gamma\phi^2\mathbbm{1}_{t\leq (\nu|k_m|)^{-\f12}}\underbrace{\nu^{\f12}|k_m|^{\f12}\ep^{-\f12}}_{=|k_m|}\|\cos(y_m)f_\bk\|_2^2 \\
        &+ 2\gamma\phi^3\ep^{-1/2}\lf( \Re\int |\cos(y_m)|^2\lf(\nu\de_{\by}f - \sum_{l=1}^{\dd}i\sin(y_l)k_lf_\bk\rg)\overline{f_\bk}\dy\rg)\\
        \leq& 3\gamma\phi^2\mathbbm{1}_{t\leq (\nu|k_m|)^{-\f12}}|k_m|\|\cos(y_m)f_\bk\|_2^2 \\
        &+ 2\gamma\phi^3\ep^{-\f12}\nu\lf(2\Re\int \cos(y_m)\sin(y_m)\pa_{y_m}f_\bk\ \overline{ f_\bk}\dy -  \|\cos(y_m) \na_\by f_\bk\|_2^2\rg).\end{split}
        \end{align}
        Hence,
        \begin{align}
        T_{\gamma}
        \leq& 3\gamma\phi^2\mathbbm{1}_{t\leq (\nu|k_m|)^{-\f12}}|k_m|\|\cos(y_m)f_\bk\|_2^2  +
        {4\gamma\phi^3\nu^{\f12}{|k_m|^\f12}\|f_\bk\|_2^2 }-\gamma\phi^3\nu^\f12|k_m|^{\f12}\|\cos(y_m)\na_{\by}f_\bk\|_2^2 .
    \end{align}This concludes the proof of \eqref{nondegenerate gamma estimate}.
\end{proof}
}
\subsection{The Mixing Estimates}
In this section, we derive the mixing estimates. The key is to establish bounds for the following vector fields:
\begin{align}
    &\Gamma^+_m f_\bk:= \pa_{y_m} f_\bk+\A^+(t,\nu,k_m)\cos(y_m)ik_m   f_\bk,\quad \A^+(t,\nu,k_m):=\frac{\tanh((1-i)\sqrt{\nu|k_m|}t)}{(1-i)\sqrt{\nu|k_m|}},\\ &\Gamma_m^-f_\bk:=\pa_{y_m} f_\bk+ \A^-(t,\nu,k_m) \cos(y_m) ik_m f_\bk,\quad \A^-(t,\nu,k_m):=\frac{\tanh((1+i)\sqrt{\nu|k_m|}t)}{(1+i)\sqrt{\nu|k_m|}}.
\end{align}
These vector fields mimic the behavior of the vector field $\pa_y +t\cos(y_m)ik_m  $, which commutes with the inviscid dynamics $\eqref{eq:hyp_x}_{\nu=0}$.  For the sake of simplicity, we further define the quantities
\begin{align}\label{msc_H}\begin{split}
   &{\msc H_+(t,\nu,k_m)}:=-2i\nu |k_m|\mathcal{A}^+(t,\nu,k_m)=(1-i)\sqrt{\nu|k_m|}\tanh((1-i)\sqrt{\nu|k_m|} t),\\
   &{ \mathscr H_-(t,\nu,k_m):=2i \nu |k_m|\mathcal{A}^-(t,\nu,k_m)=(1+i)\sqrt{\nu|k_m|}
\tanh((1+i)\sqrt{\nu|k_m|}t).}
\end{split}
\end{align}
Since in the sequel, we will mostly work with $\mathscr{H}_+$, we use the simplified notation $ \msc H= \msc H_+.$ Here, we observe that 
\begin{align}\label{AHpm_est}
   |\A^\pm(t)|=\frac{|\mathscr{H}_\pm(t)|}{2\nu|k_m|}\approx \min\{t,\frac{1}{\nu^\f12|k_m|^\f12}\}=\nu^{-\f12}|k_m|^{-\f12}\phi(t).
\end{align}
Here, the $\phi(t)$ function is defined in \eqref{hyp_fnctnl}.  
Moreover, we can check that 
\begin{align}
    &\mys{ 
 \Re \mathscr{H}_{ \pm}(t)=\sqrt{\nu|k_m|} \frac{\sinh (2\sqrt{\nu\left|k_m\right|}t)-\sin (2 \sqrt{\nu\left|k_m\right|}t)}{\cosh (2 \sqrt{\nu\left|k_m\right|}t)+\cos (2 \sqrt{\nu\left|k_m\right|}t)}\geq 0}. 
\end{align}  
Computation yields that 
\begin{align}
\label{eq_Gamma}      &\lf(\pa_t+i u_1(\by)\cdot \bk-\nu\de_\by+\msc H_\pm\rg)\Gamma_m^\pm f_\bk=\msc H_\pm\pa_{y_m}((1\mp\sin(y_m)\mathrm{sign}(k_m))f_\bk)\pm\frac{1}2\mathrm{sign}(k_m)\mathscr{H}_\pm\cos(y_m)f_\bk.
\end{align}

Now we define two smooth cutoff functions, 
\begin{align}
    \chi_+:=\pw \chi0+\pw \chi1,\quad \chi_-:=\pw \chi0+\pw \chi 2.
\end{align}
Thus, $\chi_+$ (respectively, $\chi_-$) vanishes in an $\mathcal{O}(1)$-neighborhood of $-\pi/2$ (respectively, $\pi/2$). 

First  we have the following energy estimate for the weighted $L^2$-norm $\|\chi_+\Gamma_m^+\|_{L^2}^2$. The proof is similar to the arguments in \cite[Lemma A.3]{GuHe24}. and is omitted  for the sake of brevity.
\begin{lem}There exists a constant $C$ such that, for $k_m>0$, the following estimate holds:
    \begin{align}
    \|\chi_+ \Gamma_m^+f_\bk(t)\|_{L^2}^2+\|\chi_- \Gamma_m^-f_\bk(t)\|_{L^2}^2\leq C\|f_{0,\bk}\|_{H_\by^1}^2. \label{Ga_est}
\end{align}
For $k_m<0$, we have the corresponding estimate, $\displaystyle     \|\chi_+ \Gamma_m^-f_\bk(t)\|_{L^2}^2+\|\chi_- \Gamma_m^+f_\bk(t)\|_{L^2}^2\leq C\|f_{0,\bk}\|_{H_\by^1}^2$. 
\end{lem}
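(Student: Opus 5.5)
We prove \eqref{Ga_est} by a weighted $L^2$ energy estimate for $g:=\chi_+\Gamma_m^+f_\bk$, using the commutator identity \eqref{eq_Gamma}; the $\Gamma_m^-$ term and the case $k_m<0$ follow in the same way (replacing $\msc H_+$ by $\msc H_-$ and exchanging $\chi_\pm$). Take $k_m>0$, so $\mathrm{sign}(k_m)=1$. The source in \eqref{eq_Gamma} for $\Gamma_m^+$ then contains $\msc H\,\pa_{y_m}\bigl((1-\sin y_m)f_\bk\bigr)$. The factor $1-\sin y_m$ vanishes quadratically at $y_m=\pi/2$ and is bounded below away from there. Near $y_m=-\pi/2$, where it does not vanish, the cutoff $\chi_+$ kills the contribution. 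This is the reason for pairing $\Gamma_m^+$ with $\chi_+$.

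\textbf{Energy identity.} Multiply \eqref{eq_Gamma} by $\chi_+^2\overline{\Gamma_m^+f_\bk}$, take real parts and integrate. The transport term $iu_1\cdot\bk$ is purely imaginary and drops out. The damping $\Re\msc H\ge0$ produces a good term $-2\Re\msc H\,\|g\|^2$. Diffusion gives
\[
-2\nu\|\na_\by g\|^2+2\nu\|(\pa_{y_m}\chi_+)\Gamma_m^+f_\bk\|^2 .
\]
The commutator error is bounded by $C\nu\|\Gamma_m^+f_\bk\|^2_{L^2(\mathrm{supp}\,\pa\chi_+)}$. On this support $|\cos y_m|\gtrsim1$, so $\Gamma_m^+f_\bk$ is controlled by $\|\pa_{y_m}f_\bk\|+|\A^+||k_m|\|\cos(y_m)f_\bk\|$. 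Its $\nu$-weighted time integral is bounded by the hypocoercive dissipation in \eqref{dfn_del_0} together with Theorem \ref{thm:ED}. Concretely, $\nu|\A^+|^2k_m^2\|\cos f\|^2\lesssim \phi^2|k_m|\,\|\cos(y_m) f_\bk\|^2$ by \eqref{AHpm_est}, and its time integral is bounded by $\cG[f_{0;\bk}]\le C\|f_{0,\bk}\|_{H^1_\by}^2$ after an $H^1$ version of the argument.

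\textbf{Source terms.} The term $\tfrac12\msc H\cos(y_m)f_\bk$ is paired with $g$. We use $|\msc H|\lesssim\sqrt{\nu|k_m|}$ together with Young's inequality against $\Re\msc H\|g\|^2$. For $t\gtrsim(\nu|k_m|)^{-1/2}$ we have $\Re\msc H\approx|\msc H|$, so this is fine. The remainder is $|\msc H|\,\|\cos(y_m)f_\bk\|^2$, which is integrable in time by the enhanced-dissipation bound of $\cG$ (again using $\phi\approx1$ there). For small $t$, $|\msc H|\lesssim\nu|k_m|t$ and the interval has length $(\nu|k_m|)^{-1/2}$, so the contribution is $O(1)\|f_0\|^2$.

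\textbf{Main obstacle.} The main obstacle is the term
\[
\msc H\,\langle\pa_{y_m}((1-\sin y_m)f_\bk),\chi_+^2\Gamma_m^+f_\bk\rangle .
\]
I would first rewrite $\pa_{y_m}f_\bk=\Gamma_m^+f_\bk-\A^+ik_m\cos(y_m)f_\bk$, which yields
\[
(1-\sin y_m)\Gamma_m^+f_\bk-\cos(y_m)f_\bk-(1-\sin y_m)\A^+ik_m\cos(y_m) f_\bk .
\]
The first piece gives $\msc H\int(1-\sin y_m)\chi_+^2|\Gamma_m^+f_\bk|^2$, whose real part must be absorbed. This is delicate because $\Re[\msc H(1-\sin)]$ can be as large as $2\Re\msc H$. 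I would handle it by keeping the exact phase of $\msc H$ and using the algebraic relation $\msc H=-2i\nu|k_m|\A^+$, so that the product $\msc H\A^+$ combines with $\Re\msc H$. If this fails, I would fall back on integrating by parts to move $\pa_{y_m}$ onto $\chi_+^2\overline{\Gamma_m^+f_\bk}$, and then absorb the result with $\nu\|\na g\|^2$, paying with $\|\msc H\|^2/\nu\lesssim|k_m|$ times $\|(1-\sin)f\|^2\lesssim\|\cos(y_m) f\|^2$. That cost is again controlled by the time-integrated hypocoercive dissipation $\int\phi^2|k_m|\|\cos(y_m) f_\bk\|^2\,dt\lesssim\|f_0\|^2$. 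The $\A^+$ piece is of the same type, since $|\A^+k_m|\,|\msc H|\lesssim|k_m|\phi$. Gronwall's inequality then closes the estimate with $\|g(0)\|=\|\chi_+\pa_{y_m}f_{0,\bk}\|\le\|f_{0,\bk}\|_{H^1}$.
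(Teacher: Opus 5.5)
Your argument is correct and is the same weighted energy estimate for $\chi_+\Gamma_m^+f_\bk$ built on \eqref{eq_Gamma} that the paper delegates to \cite[Lemma A.3]{GuHe24}, provided you commit to your fallback: integrate the divergence-form term $\msc H\,\pa_{y_m}((1-\sin y_m)f_\bk)$ by parts, using $|\msc H|^2/\nu\lesssim |k_m|\phi^2$ and $(1-\sin y_m)^2\lesssim \cos^2 y_m$, which holds only on $\operatorname{supp}\chi_+$ (the factor $\phi^2$, which your write-up omits, is what makes the cost time-integrable against \eqref{dfn_del_0}). Drop your first idea of substituting $\pa_{y_m}f_\bk=\Gamma_m^+f_\bk-\A^+ik_m\cos(y_m)f_\bk$ and exploiting the phase of $\msc H$: the real part of $\msc H\int(1-\sin y_m)\chi_+^2|\Gamma_m^+f_\bk|^2$ is $\Re\msc H\int(1-\sin y_m)\chi_+^2|\Gamma_m^+f_\bk|^2$ whatever the phase, and it leaves a net damping $\Re\msc H\,\sin y_m$ that changes sign on $\operatorname{supp}\chi_+$, so it cannot be absorbed.
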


With the vector-field estimates established, we are ready to present the following mixing theorem. 
\begin{theorem}[Linear Mixing]
\label{thm:mix} Consider (regular) solutions $\eta$ to the following equation associated with \eqref{eq:hyp_x},
\begin{align} \label{eq:hypo}
\begin{split}& \pa_t \eta +u_1(\by)\cdot \na_\bx \eta =\nu\Delta_\by \eta, \quad \eta(t=0)=\eta_0,\quad \nu\in(0,1],\\ 
&u_1(\by)=\sum_{j=1}^\dd \sin(y_j)\mathbf{e}_j^{\bx},\quad \br{\eta_0}_\bx\equiv 0.\end{split}   
\end{align} 
Then the following mixing estimate holds for all $t>0$ and all $F\in L_t^\infty H^{\sigma}_{\bx,\by}, \ \sigma:=\max\{1,\dd/2+\varsigma\}$:
\begin{equation} \label{linear_ID} 
   \lf|\iint  \eta(t) \overline{  F(t) } \dx\mathrm{d}\by\rg|\leq  C_{\dd,\varsigma}\min\lf\{\frac{\nu^{1/4}}{\min\{1,\nu^{1/4}t^{1/2}\}},e^{-\delta_{0}\nu^{1/2}t}\rg\}\| \eta_0\|_{H_{\bx,\by}^{\sigma}}\| F\|_{L_t^\infty H_{\bx,\by}^{\sigma}}. 
\end{equation}
Here, $C_{\dd,\varsigma}\geq1$ depends only on $\dd$ and $\varsigma$, $\varsigma>0$ is an arbitrarily small positive number, $f\vee g:=\max\{f,g\}$, and $\delta_{0}$ is defined in Theorem \ref{thm:ED}.  
\end{theorem}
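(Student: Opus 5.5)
The exponential branch of the minimum follows directly from Corollary \ref{cor:ED_full}. By Cauchy--Schwarz,
\[
\Bigl|\iint \eta(t)\overline{F(t)}\Bigr|\leq \|\eta(t)\|_{L^2}\|F(t)\|_{L^2}\leq \sqrt e\, e^{-\delta_0\nu^{1/2}t}\|\eta_0\|_{L^2}\|F\|_{L^\infty_tL^2}.
\]
The remaining task is the algebraic bound $\nu^{1/4}/\min\{1,\nu^{1/4}t^{1/2}\}$. When $t\leq \nu^{-1/2}$ this quantity equals $t^{-1/2}$. When $t\geq\nu^{-1/2}$ it equals $\nu^{1/4}$, which dominates what the estimate at $t\sim\nu^{-1/2}$ gives. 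So it is enough to prove a bound of the form $C t^{-1/2}$ for $t\lesssim \nu^{-1/2}$, together with a uniform $C\nu^{1/4}$ bound for later times.

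For the algebraic bound I would work in $\bx$-Fourier modes. By Plancherel,
\[
\iint\eta\overline F=\sum_{\bk\neq0} e^{-\nu|\bk|^2t}\int f_\bk(t,\by)\overline{\wh F_\bk(t,\by)}\,\mathrm d\by,
\]
and I would fix $\bk$ with its maximizing index $m$, taking $k_m>0$ (the case $k_m<0$ is symmetric). The plan is to gain decay from the vector fields $\Gamma_m^\pm$. On the support of $\chi_+$, where $\cos(y_m)$ is not small, I would write
\[
f_\bk=\frac{\Gamma_m^+f_\bk-\pa_{y_m}f_\bk}{\A^+ik_m\cos(y_m)},
\]
which amounts to inverting $\cos(y_m)$ away from $-\pi/2$. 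I would then integrate by parts in $y_m$ to move the $\pa_{y_m}$ onto $\chi_+\overline{\wh F_\bk}/(\A^+ \cos(y_m))$. This gives
\[
\Bigl|\int\chi_+ f_\bk\overline{\wh F_\bk}\Bigr|\lesssim \frac{1}{|\A^+||k_m|}\bigl(\|\chi_+\Gamma_m^+f_\bk\|_{L^2}+\|f_\bk\|_{L^2}\bigr)\|\wh F_\bk\|_{H^1_\by}.
\]
The same manipulation on $\operatorname{supp}\chi_-$ uses $\Gamma_m^-$, and $\chi_++\chi_-\geq 1$ after a partition-of-unity adjustment, since $\chi_0$ is counted twice. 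Using \eqref{Ga_est} for the $\Gamma$ terms, $\|f_\bk\|_{L^2}\leq\|f_{0,\bk}\|_{L^2}$, and \eqref{AHpm_est},
\[
|\A^\pm||k_m|\approx \nu^{-1/2}|k_m|^{1/2}\phi(t)=|k_m|\min\{t,(\nu|k_m|)^{-1/2}\},
\]
each mode contributes at most
\[
C\,\frac{\max\{t^{-1},\nu^{1/2}|k_m|^{1/2}\}}{|k_m|}\,\|f_{0,\bk}\|_{H^1_\by}\|\wh F_\bk\|_{H^1_\by}.
\]
This is even better than $t^{-1/2}$ when $t\geq1$. For $t\leq 1$ I would interpolate with the trivial $L^2$ bound, via $\min\{1,t^{-1}\}\leq t^{-1/2}$. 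The $\nu^{1/2}|k_m|^{1/2}$ branch gives $\nu^{1/2}|k_m|^{-1/2}\leq\nu^{1/4}$ for $|k_m|\geq\nu^{1/2}$, which always holds since $|k_m|\geq1$ and $\nu\leq1$. Since $|\bk|\leq\sqrt{\dd}|k_m|$, any extra polynomial factors in $|\bk|$ can be absorbed.

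The summation over $\bk$ is where the regularity $\sigma=\max\{1,\dd/2+\varsigma\}$ enters. Summing $\|f_{0,\bk}\|_{H^1_\by}\|\wh F_\bk\|_{H^1_\by}$ by Cauchy--Schwarz already gives $\|\eta_0\|_{H^1}\|F\|_{H^1}$, so $H^1$ would seem enough. I expect, however, that the constant $C$ in \eqref{Ga_est}, or the derivation of that lemma for general $\bk$ (the cross terms $k_l\sin(y_l)$ with $l\neq m$ enter \eqref{eq_Gamma} only through commutators that vanish, but the weights might not), introduces a loss. A sup-in-$\bk$ bound of the form
\[
\sum_\bk|a_\bk||b_\bk|\leq\sup_\bk|a_\bk|\sum_\bk|b_\bk|,
\qquad
\sum_{\bk}|b_\bk|\lesssim\|F\|_{H^{\dd/2+\varsigma}_\bx}
\]
would handle such a loss, and the threshold $\dd/2+\varsigma$ suggests the authors control one factor in $\ell^1_\bk$ this way. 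The \textbf{main obstacle} is the integration by parts near the zeros of $\cos(y_m)$: the cutoffs $\chi_\pm$ each exclude only one critical point. I would check carefully that the division by $\cos(y_m)$ is legitimate on $\operatorname{supp}\chi_0$, and that near $y_m=\pi/2$ the $\Gamma^+$ bound (with weight $1-\sin(y_m)$ from \eqref{eq_Gamma}) makes that region harmless. If that fails, I would fall back on \eqref{Hypo_est_ndeg} for the near-critical contribution, which yields the $\nu^{1/4}$ rate.
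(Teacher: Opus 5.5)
There is a genuine gap in your per-mode estimate. You invert $\Gamma_m^+$ ``on the support of $\chi_+$, where $\cos(y_m)$ is not small,'' but $\chi_+=\chi^{(0)}+\chi^{(1)}$ vanishes only near $y_m=-\pi/2$. It is identically one near the other critical point $y_m=\pi/2$, where $\cos(y_m)=0$. The weight $1-\sin(y_m)$ in \eqref{eq_Gamma} is what makes the energy bound \eqref{Ga_est} for $\chi_+\Gamma_m^+f_\bk$ hold there, but it does nothing for \emph{inverting} $\Gamma_m^+$. Writing $f_\bk=(\Gamma_m^+f_\bk-\pa_{y_m}f_\bk)/(\A^+ik_m\cos(y_m))$ on $\operatorname{supp}\chi_+$ and integrating by parts produces two singular weights at $y_m=\pi/2$: $|\cos(y_m)|^{-1}$, which is not square integrable, and $\sin(y_m)/\cos^2(y_m)$, which is not integrable.

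Your displayed bound with prefactor $(|\A^+||k_m|)^{-1}\approx(|k_m|t)^{-1}$ is in fact false, not merely unproved. Take $\bk=k_m\mathbf{e}_m$ and let $\nu\to0$ at fixed $t$; your constant is $\nu$-independent, and $t\le(\nu|k_m|)^{-1/2}$ eventually holds. Then $f_\bk\to e^{-ik_mt\sin(y_m)}f_{0,\bk}$, and stationary phase at the nondegenerate critical point $y_m=\pi/2$, where $\chi_+=1$, gives $|\int\chi_+f_\bk\overline{F_\bk}\,\mathrm{d}\by|\sim(|k_m|t)^{-1/2}$ for generic smooth data. So $t^{-1/2}$ is the sharp rate in \eqref{linear_ID}, and any proof must lose at the critical points. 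Your fallback does not close this either. The exponential factor from \eqref{Hypo_est_ndeg} is at least $e^{-\delta_0}$ for $|k_m|=1$ and $t\le\nu^{-1/2}$, so it gives no decay on the range $1\le t\le\nu^{-1/2}$ where the $t^{-1/2}$ bound is needed.

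The missing idea is to excise a $\zeta$-neighborhood of $y_m=\pi/2$ with a cutoff $\chi_\zeta(y_m)$ and then optimize in $\zeta$.
\begin{itemize}
\item On the strip, bound crudely by its measure: $|\int\eta_\bk\overline{F_\bk}\chi_\zeta\chi_+\,\mathrm{d}\by|\le C\zeta\|\eta_\bk(0)\|_{L^\infty}\|F_\bk\|_{L^\infty}$. This uses the $L^\infty$ maximum principle for $\eta_\bk$ (the term $iu_1\cdot\bk$ is only a phase) and the embedding $H^{\sigma}_\by(\Torus^\dd)\hookrightarrow L^\infty_\by$. That Sobolev embedding in $\by$, not an $\ell^1_\bk$ summation, is where $\sigma\ge\dd/2+\varsigma$ enters.
\item On $\{|y_m-\pi/2|\ge\zeta\}\cap\operatorname{supp}\chi_+$ your inversion is legitimate, but it is not free. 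The $\Gamma_m^+$ term costs $\zeta^{-1/2}$, by Cauchy--Schwarz against $\int_{|y_m-\pi/2|\ge\zeta}\cos^{-2}(y_m)\,\mathrm{d}y_m$. The integrated-by-parts term costs $\zeta^{-1}$. Both are multiplied by $R:=\nu^{1/2}|k_m|^{-1/2}/\min\{\nu^{1/2}t,1\}$.
\item Choosing $\zeta\approx R^{1/2}$ balances $\zeta$ against $R\zeta^{-1}$ and yields $\nu^{1/4}/\min\{\nu^{1/4}t^{1/2},1\}$. The paper takes $\zeta=(\nu^{1/2}/\min\{\nu^{1/2}t,1\})^{1/2}$.
\item The complementary region $\operatorname{supp}(1-\chi_+)$ is treated symmetrically, with $\Gamma_m^-$ and an excision at $-\pi/2$.
\end{itemize}
The rest of your outline matches the paper: the exponential branch via Corollary \ref{cor:ED_full}, the reduction to $\bx$-Fourier modes, the trivial bound for $t\le1$, and the summation in $\bk$.
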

\begin{proof}
we begin by  observing that the solutions to \eqref{eq:hyp_x} and \eqref{eq:hypo} are related by $\eta(t,\bx,\by)=\sum_{|\bk|\neq 0}\eta_\bk(t,\by)e^{i\bk\cdot \bx}$. Hence, by the Plancherel equality, we have the following relation
\begin{align}
 \lf|\iint   \eta(t,\bx,\by) \overline{  F(t,\bx,\by) } \dx\mathrm{d}\by\rg|=C\lf|\sum_{|\bk|\neq0}\int \eta_\bk (t,\by)\overline{F_\bk(t,\by)} \dy\rg|.\label{linear_ID_pf} 
\end{align} Therefore, it is sufficient to establish the estimate for the right-hand side. 
Furthermore, we note that for $t\leq 1$, the estimate \eqref{linear_ID} is a direct consequence of the dissipative nature of the $L^2$-norm of $ \eta$: 
\begin{align*}
\  \lf|\iint \eta(t) F \dx\mathrm{d}\by\rg|\leq \|  \eta(t)\|_{L^2}\|F\|_{L_t^\infty L^2}\leq \| \eta_0\|_{L^2}\|F\|_{L_t^\infty L^2}\leq e^{\delta_0}e^{-\delta_0\nu^{1/2}t} \|\eta_0\|_{L^2}\|F\|_{L_t^\infty  H^{\sigma}},\quad \forall t\leq 1.
\end{align*}
Hence, without loss of generality, we assume that $t> 1$ in the remainder of the proof.  We follow the idea of the proof of Proposition 1.7 in the paper \cite{CotiZelatiDietertGerardVaret22}. For a general test function $F\in H^{\sigma}_{\bx,\bp}(\Torus^{\dd}),$ we decompose the right-hand side of \eqref{linear_ID_pf}  as 
\begin{align}
\sum_{|\bk|\neq0}\int \eta_\bk \overline{F_\bk} \dy=&\sum_{|\bk|\neq0}\lf(\int \eta_\bk \overline{F_\bk}\chi_+  d\by+\int \eta_\bk \overline{F_\bk}(1-\chi_+) \dy\rg)
=:\sum_{|\bk|\neq  0 }\lf(I_{\bk;+}+I_{\bk;-}\rg).
\end{align}
 As explained in the paper \cite{CotiZelatiDietertGerardVaret22}, a symmetry consideration yields that it is enough to consider the first part of the expression \mys{under the assumption that $k_m>0$}. For the second part, one can use the vector field $\Gamma_m^-$ and associated cutoffs to derive similar estimates. \mys{The $k_m<0$ case can be treated in a similar manner, and we omit it for the sake of brevity.} One can introduce another cutoff function $\chi_{\zeta}(y_m)$ such that it is $1$ in a $\zeta$-neighborhood of $\pi/2$ (with $\zeta\in(0,1)$ to be chosen later). Moreover, $\|\pa_{y_m} \chi_{\zeta}\|_{L_\by^\infty}\leq C\zeta^{-1}
$. With this cutoff, we further decompose the $I_{\bk;+}$ as follows
\begin{align}
   I_{\bk;+}=& \int_{\Torus^\dd} \eta_\bk \overline{F_\bk} \chi_{\zeta}\chi_{+} \dy+\int_{\Torus^\dd} \eta_\bk \overline{F_\bk} (1-\chi_{\zeta})\chi_{+} \dy
   =:I_{\bk;+}^{(1)}+I_{\bk;+}^{(2)}.
\end{align}
For the $I_{\bk;+}^{(1)}$-term, we estimate it using the length of the interval, the observation that $\|\eta_\bk(t)\|_{L^\infty}\leq \|\eta_\bk(0)\|_{L^\infty}$ and the {Sobolev embedding for $\Torus^{\dd}$}:
\begin{align*}
    |I_{\bk;+}^{(1)}|\leq C\zeta \|F_\bk(t)\|_{ L_\by^\infty}\|\eta_\bk(0)\|_{L^\infty}\leq C_\varsigma\zeta\|F_\bk(t)\|_{H_{\by}^{\sigma}}\|\eta_\bk(0)\|_{H_{\by}^{\sigma}}.
\end{align*}
Next we estimate the $I_{\bk;+}^{(2)}$ term using the observation that $|\cos(y_m)|>0$ on this interval and $(\nu|k_m|^{-1})^{\f12}(\Gamma_m^+\eta_\bk-\pa_{y_m} \eta_\bk)=\frac{i-1}{2}\tanh((1-i)\sqrt{\nu|k_m|}t)\cos(y_m)\eta_\bk$:
\begin{align}
    |I_{\bk;+}^{(2)}|
   & \leq\nu^{1/2}|k_m|^{-1/2}\lf|\int \cos(y_m)\frac{\Gamma_m^+\eta_\bk-\pa_{y_m}\eta_\bk}{\frac{i-1}{2}\tanh((1-i)\sqrt{\nu|k_m|}t)}\frac{\overline{F_\bk}}{\cos^2(y_m)}(1-\chi_{\zeta})\chi_{+} \dy\rg| \\
   &\leq \int \nu^{1/2}|k_m|^{-1/2} |\cos(y_m)|\lf|\frac{\Gamma_m^+\eta_\bk}{\frac{i-1}{2}\tanh((1-i)\sqrt{\nu|k_m|}t)}\rg|\lf|\frac{\overline{F_\bk}}{ \cos^2(y_m)} \rg| (1-\chi_{\zeta})\chi_{+} \dy\\
    &\quad+\lf|\frac{\nu^{1/2}|k_m|^{-1/2}}{\frac{i-1}{2}\tanh((1-i)\sqrt{\nu|k_m|}t)}\rg| \lf|\int\cos(y_m)\pa_{y_m}\eta_\bk \frac{\overline{F_\bk}}{\cos^2(y_m)} (1-\chi_{\zeta})\chi_{+} \dy\rg|=:T_4+T_5.
\end{align}
For the $T_4$-term, we estimate it with the Sobolev embedding, \mys{the equivalence $\phi(t)\approx|\tanh((1\pm i)\sqrt{\nu|k_m|}t)|$,} and \eqref{AHpm_est} 
as follows:\begin{align}
    T_4\leq& \frac{C\nu^{1/2}}{|k_m|^{1/2}\phi(t)}\|F_\bk(t)\|_{ L_\by^\infty} \int_{|y_m-\pi/2|\geq \zeta} \mys{\chi_+}\frac{|\Gamma_m^+ \eta_\bk| }{|\cos(y_m)|} \dy \\
    \leq&  \frac{C\nu^{1/2}\|F_\bk(t)\|_{ H_\by^{\sigma}}\|\mys{\chi_+}\Gamma_m^+\eta_\bk\|_{L^2}\zeta^{-\f12}}{|k_m|^{1/2}\min\{\nu^{1/2}|k_m|^{1/2}t,1\}} 
    \leq  \frac{C\nu^{1/2}\|F_\bk(t)\|_{ H_\by^{\sigma}}\|\mys{\chi_+}\Gamma_m^+\eta_\bk\|_{L^2}\zeta^{-\f12}}{ |k_m|^{1/2}\min\{\nu^{1/2} t,1\}},\quad \forall t>0.
\end{align}
To estimate the second term $T_5$, we recall the relation \eqref{AHpm_est}, integrate by parts, and estimate each resulting term as follows:
\begin{align}
&T_5\leq C\lf|\frac{ \nu^{1/2} |k_m|^{-1/2}}{\frac{i-1}{2}\tanh((1-i)\sqrt{\nu|k_m|}t)}\rg| \lf|\int\eta_\bk\pa_{y_m}\lf( \frac{\overline{F_\bk}}{\cos (y_m)} (1-\chi_{\zeta})\chi_{+}\rg) \dy\rg|\\
&\leq  \frac{C\nu^{\f12}|k_m|^{-\f12}}{\min\{\nu^{1/2}t,1\} }\|\eta_\bk\|_{L^\infty}\lf(\|F_\bk(t)\|_{ H_\by^1}\Bigl(\int \frac{(1-\chi_{\zeta})^2\chi_{+}^2}{|\cos(y_m)|^2} \dy\rg)^{1/2}+\|F_\bk(t)\|_{L_{\by}^\infty}\int \frac{ |\sin(y_m)|(1-\chi_{\zeta})\chi_+}{|\cos(y_m)|^2} \dy\\
&\hspace{4.5cm}+\|F_\bk(t)\|_{L_{\by}^\infty}\int \frac{ |\pa_{y_m}\chi_{\zeta}|\chi_+}{|\cos(y_m)|} \dy+\|F_\bk(t)\|_{L_{\by}^\infty}\int \frac{ |\pa_{y_m}\chi_+|(1-\chi_{\zeta})}{\lf|\cos(y_m)\rg|}\dy\Bigr)\\
&\leq  \frac{C\nu^{\f12}|k_m|^{-\f12}}{\min\{\nu^{1/2}t,1\} }\|\eta_{0;\bk}\|_{L^\infty}\|F_\bk(t)\|_{ H_\by^{\sigma}}\lf(\zeta^{-\frac12}+\zeta^{-1}+|\log\zeta|\rg)
\leq  \frac{C\nu^{1/2}\|\eta_{0;\bk}\|_{H^{\sigma}}\|F_\bk(t)\|_{ H_\by^{\sigma }}}{|k_m|^{1/2}\min\{\nu^{1/2} t,1\}}\frac{1}{\zeta}.
\end{align}
Hence, we observe that if we set $\zeta=\left(\frac{\nu^{1/2}}{\min\{\nu^{1/2} t,1\}}\right)^{1/2}$ and invoke the vector-field bound \eqref{Ga_est}, the following estimate holds
\begin{align}
    |I_{\bk;+}|\leq\frac{C\nu^{1/4}}{ \min\{\nu^{1/4} t^{1/2},1\}}\|\eta_\bk(0)\|_{H^{\sigma}}\|F_\bk(t)\|_{H_\by^{\sigma}},\quad \forall t\geq0.
\end{align}
Summing all the $\bk$-components, we obtain that 
\begin{align}\label{mix}
&\lf|\sum_{|\bk|\neq0}\int \eta_\bk (t,\by)\overline{F_\bk(t,\by)} \dy\rg|
\leq \sum_{|\bk|\neq 0}\frac{C \nu^{1/4}}{ \min\{\nu^{1/4}t^{1/2},1\}}\|\eta_\bk(0)\|_{H_\by^{\sigma}}\|F_\bk(t)\|_{ H_\by^{\sigma}}
\\
&\leq \frac{C\nu^{1/4}}{\min\{\nu^{1/4}t^{1/2},1\}}\|\eta_0\|_{H_{\bx,\by}^{\sigma}}\|F(t)\|_{ H_{\bx,\by}^{\sigma}}\leq \frac{C\nu^{1/4}}{\min\{\nu^{1/4}t^{1/2},1\}}\|\eta_0\|_{H_{\bx,\by}^{\sigma}}\|F\|_{L_t^\infty H_{\bx,\by}^{\sigma}}.
\end{align}
Moreover, thanks to the enhanced dissipation estimate in Theorem \ref{thm:ED}, we have the following estimate for all times
\begin{align}
\lf|\sum_{|\bk|\neq 0}\int\eta_\bk(t,\by)\overline{F_\bk(t,\by)}\dy\rg|\leq C\|\eta(0)\|_{L^2_{\bx,\by}}e^{-\delta_{0}\nu^{1/2}t}\|F\|_{L_t^\infty L_{\bx,\by}^2}.\label{ed}
\end{align}
Combining the estimates \eqref{mix} and \eqref{ed}, we obtain the result.
\end{proof}

\subsection{The Alternating Scheme}\label{sec:alt_sch}
In this section, we consider the alternating shear flow scenario. We recall that $d=2\dd$. Recall the smooth cutoff functions $\Phi_1(t), \Phi_2(t)$ and the alternating shear 
$\bf u$ \eqref{alt_shear}. We set $\delta_*:=5^{-1}\delta_0$ where $\delta_0$ is defined in \eqref{Hypo_est_ndeg} and  \eqref{dfn_del_0}. Without loss of generality, we assume that $\overline{\eta_0}=0$, which is a property that is preserved by the dynamics.

The goal is to show that at time $t=2\mathscr T:=30\delta_0^{-1}\nu^{-\frac12}$, the $L^2$-norm $\eta-\overline{\eta}$ decays to $e^{-1}$ times its original value. On the support of the first temporal cutoff $\Phi_1(t)$, there are two distinct regions: a) the effective region $t\in \pw IE_1:=[1,2]\times 5\delta_0^{-1}\nu^{-\frac12}$; b) the warm-up/cool-down region $\mathrm{supp}\ \Phi_1\backslash\pw IE_1$. For $t\in\mathrm{supp}\ \Phi_1(t)$, we observe that the streamline average 
 $   \lan \eta\ran_\bx(t,\by):=\frac{1}{|\Torus|^\dd}\int \eta(t,\bx,\by) \dx$ 
solves the heat equation
\begin{align}
    \pa_t \lan \eta\ran_\bx(t,\by)+\Phi_1(t)\lan u_1(\by)\cdot \na_\bx \eta\ran_\bx=\nu \Delta_\by \lan \eta\ran_\bx(t,\by)\;\Longrightarrow \;  \pa_t \lan \eta\ran_\bx(t,\by)=\nu \Delta_\by \lan \eta\ran_\bx(t,\by).
\end{align} 
Hence, 
\begin{align}
\|\br \eta_\bx(t)\|_{L^2}\leq \|\br \eta_\bx(0)\|_{L^2},\quad \forall t\in\mathrm{supp}\ \Phi_1.\label{br_eta_x}
\end{align}
Next we consider the fluctuation $\eta-\br\eta_\bx$. The fluctuation solves the passive scalar equation, which dissipates the $L^2$-norm. Moreover, on the effective region $\pw IE_1$, the fluctuation experiences strong enhanced dissipation \eqref{Hypo_est_ndeg}. Hence, it is not surprising to derive the following estimate at the transition time $\mathscr{T}$: 
$
\|\eta-\br \eta_\bx\|_{L^2}^2(\msc T)\leq\|\eta-\br \eta_\bx\|_{L^2}^2(10\delta_0^{-1}\nu^{-\frac12})\underbrace{\leq}_{\rm E.D.} e^{-4} \|\eta-\br \eta_\bx\|_{L^2}^2(5\delta_0^{-1}\nu^{-\frac12})\leq \frac{1}{e^4}\|\eta_0-\br {\eta_0}_\bx\|_{L^2}^2.
$ 
This concludes the argument in the first region. 

For $t\in \mathrm{supp}\ \Phi_2$, an argument similar to that in \eqref{br_eta_x} shows that the $L^2$-norm of the $\lan \eta\ran_{\by}$ is non-expansive on  the support of $\Phi_2(t)$. 
Moreover, we observe that at the transition time $\msc T$
\begin{align}
    \|\lan \eta\ran_{\by}\|_{L^2_{\bx,\by}}(\msc T)\leq \|\eta-\lan \eta\ran_\bx\|_{L^2_{\bx,\by}}(\msc T)\leq \frac{1}{e^2}\|\eta_0-\lan \eta_0\ran_\bx\|_{L^2_{\bx,\by}}\leq \frac{1}{e^2}\|\eta_0\|_{L^2_{\bx,\by}}.
\end{align}
As a consequence,
$
    \|\lan \eta\ran_{\by}(t)\|_{L^2_{\bx,\by}}\leq \frac{1}{e^2}\|\eta_0\|_{L^2_{\bx,\by}},\,\forall t\in \operatorname{supp}\Phi_2. 
$ 
Now by Theorem \ref{thm:ED}, we have that at time $2\msc T$, 
$
    \|\eta-\lan \eta\ran_{\by}\|_{L^2_{\bx,\by}}(2\msc T)\leq \frac{1}{e^2} \|\eta-\lan \eta\ran_{\by}\|_{L^2_{\bx,\by}}(\msc T)\leq \frac{1}{e^2} \|\eta_{0}\|_{L^2_{\bx,\by}}.
$ 
Hence, $
    \|\eta(2\msc T)\|_{L^2_{\bx,\by}}\leq  \|\lan \eta\ran_{\by} \|_{L^2_{\bx,\by}}(2\msc T)+  \|\eta-\lan \eta\ran_{\by}\|_{L^2_{\bx,\by}}(2\msc T)\leq \frac{2}{e^2} \|\eta_{0}-\overline{\eta_{0}}\|_{L^2_{\bx,\by}}.
$ 
By the same argument, we see that for $s,t\in 2\msc T \mathbb N$, the following estimate holds:
\begin{align}\label{discrete_ED}
 \|\eta(s+t)-\overline{\eta}(s+t)\|_{L^2_{\bx,\by}}\leq \lf(\frac{2}{e^2}\rg)^{\frac{t}{2\msc T}} \|\eta(s)-\overline{\eta}(s)\|_{L^2_{\bx,\by}}.
\end{align}
{
For general $s,t \geq 0 $, we find the smallest integer $N$ and largest integer $M$ so that $(2\msc T)N\geq s,\, (2\msc T)M\leq s+t,\, M, N\in \mathbb{N}.$  
Note that if $t\leq 2\msc T $, then the estimate \eqref{ED_V} is direct:
\begin{align}
\|\eta(s+t)-\overline{\eta}(s+t)\|_2\leq \|\eta(s)-\overline{\eta}(s)\|_2\leq e^2\|\eta(s)-\overline{\eta}(s)\|_2 e^{-\frac{1}{2\msc T } t},\quad 0\leq t\leq 2\msc T .
\end{align}
Hence we assume $t>2\msc T $ and observe that $2\msc T (M-N)\geq t-4\msc T$. Now we apply the estimate  \eqref{discrete_ED} with $s,t\in 2\msc T\mathbb{N}$, and the non-increasing nature of $L^2$-norm of the solutions to derive that
\begin{align*}
\|\eta-\overline{\eta}\|_2(s+t)\leq&\|\eta-\overline\eta\|_2(M2\msc T)\leq \|\eta-\overline\eta\|_2( N2\msc T)\lf(\frac{2}{e^2}\rg)^{M-N}\leq \|\eta-\overline\eta\|_2(s) e^{-\frac{1}{2\msc T}(2\msc T)(M-N)}\\
\leq&\|\eta-\overline\eta\|_2(s) e^{-\frac{1}{2\msc T} (t-4\msc T)}=e^2\|\eta-\overline\eta\|_2(s)e^{-\frac{1}{2\msc T} t},\quad \forall s,t\geq 0.
\end{align*}This concludes the proof of (\ref{ED_V}) in the general case.}

\section{Mass-Searching Dynamics}\label{sec:mass_search}
\begin{proof}[Proof of Theorem \ref{thm:mass_1}] Consider the ratio $v:=\omega e^{-\ww}$, which solves the equation $
\pa_t v+  \vv \cdot\na_\bx v  =   \nu\de_\bx v .
$ 
Direct computation yields that the spatial average of $v$ satisfies the relation $\overline{v}(t)=\overline{v}(t=0)=M$ for all $t\geq 0.$ As a consequence of Theorem \ref{thm:lnrED_AltS}, we obtain that 
 $
\|\omega e^{-\ww}- M\|_{L^2}(t)=\|v- \overline{v}\|_{L^2}(t)\leq C \|v(t=0)- \overline{v}\|_{L^2}e^{-\delta_{\rm ED}\nu^{1/2}t}
\leq C \||\Torus|^de^{-\ww}-M\|_{L^2}e^{-\delta_{\rm ED}\nu^{1/2}t}. 
$ 
Hence, the solution converges to the normalization constant $M$ in $L^2$ at the enhanced exponential rate $\mathcal{O}(\nu^{1/2})$. 
\end{proof}

\myh{
\begin{proof}[Proof of Theorem \ref{thm:mass_2}] To derive the convergence \eqref{mass_converge_2}, we observe that the ratio $g:=fe^{-\ww}$ solves
\begin{align*}
\pa_t g+\na_\bx\cdot(\bv(\bp) g)=\nu\de_\bp g,\quad
g(t=0,\bx,\bp)=|\Torus|^de^{-\ww}.
\end{align*}
Furthermore, the $\bx$-average $\lan g\ran$ solves the equation 
\begin{align}
&\pa_t\lan g\ran=\nu\de_\bp \lan g\ran,\quad
\lan g\ran(t=0,\bp)\equiv\int_{\Torus^d}e^{-\ww(\bx)}\dx=M.
\end{align}
Hence, the $\bx$-average satisfies $\lan g\ran(t,\bp)\equiv M$, and the fluctuation $\wt g:=g-\lan g\ran$ solves the PDE
\begin{align}&\pa_t \wt g+\na_\bx\cdot(\bv \wt g)=\nu\de_\bp \wt g,\quad
\wt g(t=0,\bx,\bp)=|\Torus|^de^{-\ww}-M.
\end{align} 
The enhanced dissipation \eqref{Hypo_est_ndeg_full} implies that there is a universal constant $\delta\in(0,1)$ so that
\begin{align}
\|\wt g(t)\|_{L_{\bx,\bp}^2}\leq C\||\Torus|^de^{-\ww}-M\|_{L_{\bx,\bp}^2}e^{-\delta \nu^{1/2}t}. 
\end{align}Hence, we have that
$
\|f(t)e^{-\ww}-M\|_{L_{\bx,\bp}^2}\leq C\||\Torus|^de^{-\ww}-M\|_{L_{\bx,\bp}^2}e^{-\delta \nu^{1/2}t}.
$ This concludes the proof. 
\end{proof}
}
\bibliographystyle{abbrv}
\bibliography{nonlocal_eqns}

\end{document}